\newcommand{\noun}[1]{\textsc{#1}}
\providecommand{\tabularnewline}{\\}
\begin{document}
\title{Invariant spectral foliations with applications to model order reduction
and synthesis}
\titlerunning{Invariant Spectral Foliations}
\author{Robert Szalai}
\institute{University of Bristol, Department of Engineering Mathematics\\\email{r.szalai@bristol.ac.uk}}
\date{Last revision: 31 July 2020}
\maketitle
\begin{abstract}
The paper introduces a technique that decomposes the dynamics of a
nonlinear system about an equilibrium into low order components, which
then can be used to reconstruct the full dynamics. This is a nonlinear
analogue of linear modal analysis. The dynamics is decomposed using
Invariant Spectral Foliations (ISF), which is defined as the smoothest
invariant foliation about an equilibrium and hence unique under general
conditions. The conjugate dynamics of an ISF can be used as a reduced
order model. An ISF can be fitted to vibration data without carrying
out a model identification first. The theory is illustrated on a analytic
example and on free-vibration data of a clamped-clamped beam. \keywords{Model order reduction, Invariant foliation, Non-linear system identification}
\end{abstract}

\section{Introduction}

In this paper we highlight how invariant foliations \cite{hirsch1970,wiggins2013normally}
of dynamical systems can be used to derive reduced order models (ROM)
either from data or physical models. We consider dynamics about equilibria
only. We assume a deterministic process, such that future states of
the system are fully determined by initial conditions. An invariant
foliation is a decomposition of the state space into a family of manifolds,
called leaves, such that the dynamics brings each leaf into another
(see figure \ref{fig:IntroFoliation}). If a leaf is brought into
itself, then it is also an invariant manifold. A foliation is generally
characterised by its co-dimension, which equals the number of parameters
needed to describe the family of leaves so that it covers the state
space. The dynamics that maps one leaf of an invariant foliation into
another leaf has the same dimensionality as the co-dimension of the
foliation. We call this mapping the conjugate dynamics, which is lower
dimensional than the dynamics of the underlying system and therefore
suitable to be used as an ROM. Such ROM treats all initial conditions
within one leaf equivalent to each other and characterises the dynamics
of the whole system. In contrast, the conjugate dynamics (ROM) on
an invariant manifold captures the dynamics only on a low-dimensional
subset of the state space. The conjugate dynamics on an invariant
manifold, however describes the exact evolution of initial conditions
taken from the invariant manifold, while the conjugate dynamics on
an invariant foliation is imprecise about the evolution, it can only
tell which leaves a trajectory goes through. This ambiguity about
the state has some advantages: for all initial conditions there is
a leaf and a valid reduced dynamics. In contrast, when using invariant
manifolds, the initial condition must come from the invariant manifold
in order to have a valid prediction.

Multiple foliations can act as a coordinate system about the equilibrium.
When individual leaves from different foliations intersect in one
point, the dynamics can be fully reconstructed from the foliations.
Therefore invariant foliations are fully paralleled with linear modal
analysis of mechanical systems \cite{EwinsBook}: it allows both the
decomposition of the system and the reconstruction of the full dynamics.
To reconstruct the dynamics one needs to find intersection points
of leaves from different foliations which is more complicated than
adding vibration modes of linear system. However, such composability
is not at all possible with invariant manifolds or any other nonlinear
normal mode (NNM) definition \cite{KERSCHEN2009170,ShawPierre,Haller2016,Rosenberg66}.
Therefore, an invariant foliation seems to be the closest nonlinear
alternative of linear modal analysis. The concept of composition is
illustrated in figure \ref{fig:IntroFoliation}.

\begin{figure}
\begin{centering}
\includegraphics[scale=0.75]{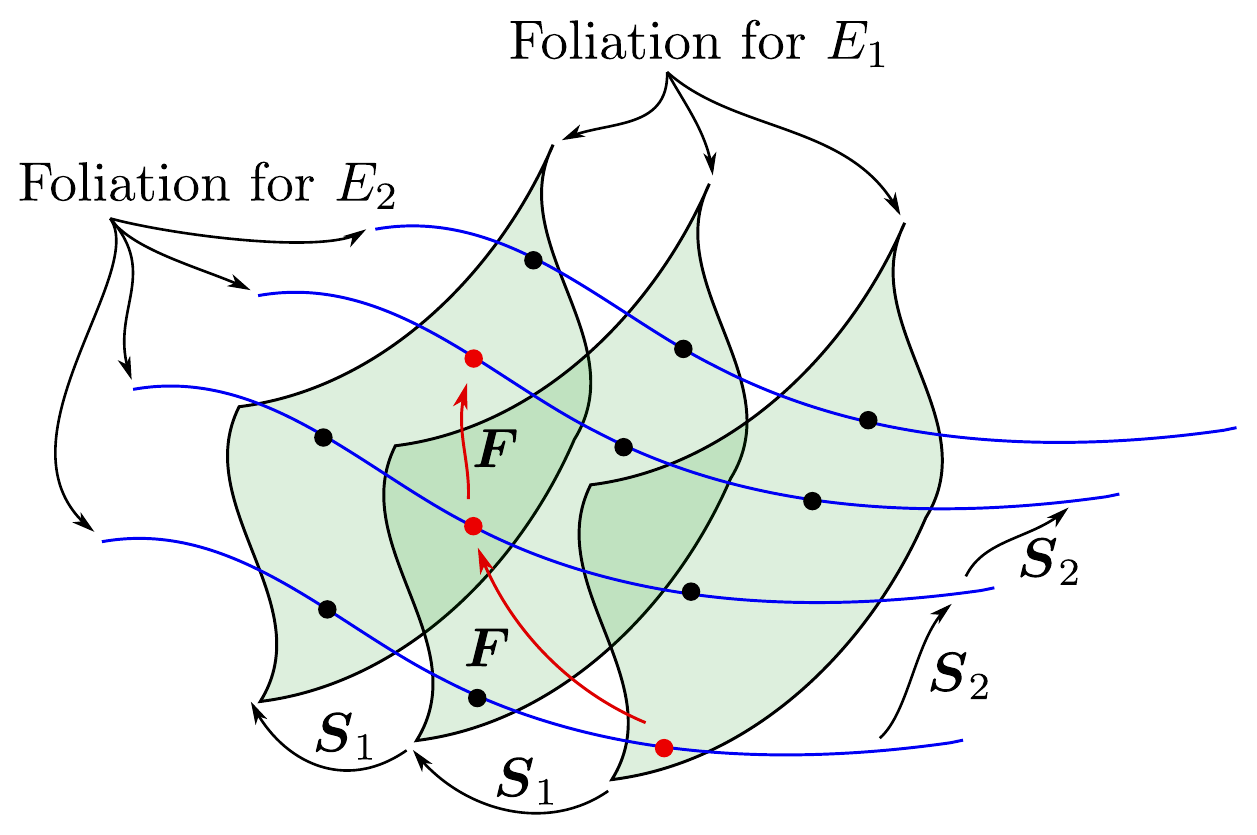}
\par\end{centering}
\caption{\label{fig:IntroFoliation}Two foliations act as a coordinate system.
An initial condition (red dots) is mapped forward by $\boldsymbol{F}$,
however each leaf of a foliation is brought forward by the lower dimensional
maps $\boldsymbol{S}_{1}$ and $\boldsymbol{S}_{2}$. Due to invariance
of the foliation, the full trajectory can be reconstructed from the
two maps $\boldsymbol{S}_{1}$ and $\boldsymbol{S}_{2}$ and the leaves
of the foliations.}
\end{figure}

Invariant foliations can be directly fitted to time-series data, because
the foliation acts as a projection, much like linear modes. This allows
for another parallel to be drawn with modal testing \cite{EwinsBook},
which identifies linear vibration modes from data. Direct fitting
of the manifold invariance equation to data is not possible, because
the likelihood of data points falling onto the manifold is zero. Instead,
in \cite{Szalai20160759} a two-step process was used to find invariant
manifolds in vibration data. First a high-dimensional black-box model
was identified and then the invariant manifold was extracted. In contrast,
a foliation covers all of the phase space where the data lives, hence
all available data can be used for fitting. Moreover, a leaf of a
foliation that is mapped into itself or equivalently in our case contains
the equilibrium, is an invariant manifold. Therefore finding two complementary
invariant foliations, one transversal to an invariant manifold, another
containing the invariant manifold as a leaf can substitute for calculating
the invariant manifold and ROM.

The condition for uniqueness of invariant foliations are different
from invariant manifolds. Only invariant manifolds about equilibria
that are sufficiently smooth are unique. Unique invariant manifolds
about equilibria, periodic or quasi-periodic orbits are called spectral
submanifolds (SSM) \cite{Haller2016}. The theory behind SSMs was
mainly developed in \cite{delaLlave1997}, generalised to infinite
dimensions in \cite{CabreLlave2003} and applied to mechanical systems
in \cite{Haller2016}. For an SSM to exist, non-resonance conditions
need to be satisfied and the dynamics must be smoother than a so-called
spectral quotient, which is calculated from the eigenvalues of the
Jacobian about an equilibrium. For an SSM to be interesting, it must
contain the slowest dynamics, so that it captures long-term behaviour,
rather than just transients (see R2 in \cite{HallerExact2016}). It
turns out that the spectral quotient of such an SSM is also the highest
and therefore the SSM requires the highest order of smoothness to
be unique. While the concept of smoothness is theoretically well-understood,
it is almost impossible to quantify numerically or determine from
data. This is one of the reasons why it is challenging to calculate
SSMs numerically (in contrast to series expansion \cite{PONSIOEN2018269})
in a reproducible manner. Invariant foliations, as explained below,
also need to satisfy non-resonance conditions to exist and be sufficiently
smooth to be unique. We call a unique invariant foliation tangential
to an invariant linear subspace about an equilibrium an invariant
spectral foliation (ISF). In contrast to SSMs, ISFs that capture the
long term dynamics require the lowest order of smoothness among all
ISFs. This however does not mean that the smoothness requirements
of SSMs can be circumvented by extracting an SSM as the leaf of the
ISF going through the origin. In order to obtain the slowest SSM,
one would need to calculate the fastest ISF, both of which require
the same high-order of smoothness for uniqueness.

The existing literature on invariant foliations is rich and difficult
to summarise without distracting too much from the purpose of the
paper (see e.g.~\cite{hirsch1970,BatesFoliations2000,ShubFoliation2012}).
However, the setting used here is also different from most of the
literature in that we are not dealing with stable or unstable fibres
and hyperbolicity is not an important aspect either. The closest results
in the literature are the remarks of de la Llave in section 7.3 of
\cite{delaLlave1997} and section 2 of \cite{CabreLlave2003}, that
generalise the parametrisation method to foliations. 

The plan of the paper is as follows. We start with introducing invariant
foliations and describe their properties. We then state and prove
theorems for the existence and uniqueness of ISFs both for discrete-time
systems and vector fields. Finally, we describe a simple method that
allows finding ISFs from time-series data, which is then tested on
a simple example alongside with two other approaches. 

\section{Invariant foliations}

Consider a dynamical system that is defined by the $C^{r}$ map $\boldsymbol{F}:\mathbb{R}^{n}\to\mathbb{R}^{n}$.
A trajectory of the dynamical system is obtained by recursively applying
$\boldsymbol{F}$ to the initial condition $\boldsymbol{x}_{0}$,
such that successive points along a trajectory are generated by
\begin{equation}
\boldsymbol{x}_{k+1}=\boldsymbol{F}\left(\boldsymbol{x}_{k}\right),\quad k=0,1,\ldots.\label{eq:MapTrajectory}
\end{equation}
We assume that the origin is a fixed point, that is $\boldsymbol{F}\left(\boldsymbol{0}\right)=\boldsymbol{0}$
and the Jacobian at the origin, $\boldsymbol{A}=D\boldsymbol{F}\left(\boldsymbol{0}\right)$,
is semisimple. The eigenvalues of $\boldsymbol{A}$ are denoted by
$\mu_{i}$, $i=1,\ldots,n$ and we have a full set of left and right
eigenvectors, $\boldsymbol{v}_{i}^{\star}$ and $\boldsymbol{v}_{i}$,
that satisfy $\boldsymbol{v}_{i}^{\star}\boldsymbol{A}=\mu_{i}\boldsymbol{v}_{i}^{\star}$
and $\boldsymbol{A}\boldsymbol{v}_{i}=\mu_{i}\boldsymbol{v}_{i}$,
respectively. For convenience we also assume that the eigenvectors
are scaled such that $\boldsymbol{v}_{i}^{\star}\boldsymbol{v}_{i}=1$.
Let us denote the linear subspace spanned by the first $\nu$ eigenvectors
as $E=\mathrm{span}\left\{ \boldsymbol{v}_{1},\ldots,\boldsymbol{v}_{\nu}\right\} $
and the dual subspace $E^{\star}=\mathrm{span}\left\{ \boldsymbol{v}_{1}^{\star},\ldots,\boldsymbol{v}_{\nu}^{\star}\right\} $.
Finally, we assume that $\boldsymbol{A}$ is a contraction, that is,
$\left|\mu_{i}\right|<1,\,\forall i=1,\ldots,n$.

We are interested in how codimension-$\nu$ sets about the origin
are brought into each other by $\boldsymbol{F}$. The manifold of
sets is parametrised by an $\nu$-dimensional parameter $\boldsymbol{z}\in\mathbb{R}^{\nu}$
and a single set at point $\boldsymbol{z}$ is denoted by $\mathcal{L}_{\boldsymbol{z}}$.
We assume that each $\mathcal{L}_{\boldsymbol{z}}$ is a differentiable
manifold and $\mathcal{L}_{\boldsymbol{z}}$ and $\mathcal{L}_{\tilde{\boldsymbol{z}}}$
are disjointed if $\boldsymbol{z}\neq\tilde{\boldsymbol{z}}$. In
technical terms this is called a codimension-$\nu$ \emph{foliation}
of $\mathbb{R}^{n}$ \cite{Lawson1974} and each $\mathcal{L}_{\boldsymbol{z}}$
is a \emph{leaf}. The foliation is a collection of leaves, that is
$\mathcal{F}=\left\{ \mathcal{L}_{\boldsymbol{z}}:\boldsymbol{z}\in\mathbb{R}^{\nu}\right\} $.

A foliation $\mathcal{F}$ is invariant under $\boldsymbol{F}$ if
there is a map $\boldsymbol{S}:\mathbb{R}^{\nu}\to\mathbb{R}^{\nu}$,
which brings the leaves into each other in the same way as the high-dimensional
dynamics, that is 
\begin{equation}
\boldsymbol{F}\left(\mathcal{L}_{\boldsymbol{z}}\right)\subset\mathcal{L}_{\boldsymbol{S}\left(\boldsymbol{z}\right)}.\label{eq:LeafInvariance}
\end{equation}
A foliation can be represented by a function $\boldsymbol{U}:\mathbb{R}^{n}\to\mathbb{R}^{\nu}$,
called \emph{submersion}, such that a leaf is the pre-image of the
parameter $\boldsymbol{z}$ under the submersion $\boldsymbol{U}$,
that is, 
\begin{equation}
\mathcal{L}_{\boldsymbol{z}}=\left\{ \boldsymbol{x}\in\mathbb{R}^{n}:\boldsymbol{U}\left(\boldsymbol{x}\right)=\boldsymbol{z}\right\} .\label{eq:LeafDefinition}
\end{equation}
Using definition (\ref{eq:LeafDefinition}), we find that the inclusion
(\ref{eq:LeafInvariance}) translates into an algebraic equation for
the submersion $\boldsymbol{U}$, 
\begin{equation}
\boldsymbol{U}\left(\boldsymbol{F}\left(\boldsymbol{x}\right)\right)=\boldsymbol{S}\left(\boldsymbol{U}\left(\boldsymbol{x}\right)\right),\label{eq:adjInvariance}
\end{equation}
which is called the \emph{invariance equation}. Similar to invariant
manifolds, we require a tangency condition to a linear subspace. To
consider the dynamics corresponding to the linear subspace $E^{\star}$,
we require that
\begin{equation}
\boldsymbol{U}\left(\boldsymbol{0}\right)=\boldsymbol{0}\;\text{and}\;\mathrm{span}\,D\boldsymbol{U}\left(\boldsymbol{0}\right)=E^{\star},\label{eq:adjTangency}
\end{equation}
which means that $D\boldsymbol{U}\left(\boldsymbol{0}\right)$ is
a set of $\nu$ linearly independent row vectors from the dual space
of $\mathbb{R}^{n}$ (row vectors) spanning the whole space $E^{\star}$.

Figure \ref{fig:InvariantFibres} shows the geometry of an invariant
foliation. Each leaf is mapped into another, in particular, the green
solid line representing $\mathcal{L}_{\boldsymbol{z}}$ is mapped
into the red solid line by $\boldsymbol{F}$. A leaf that corresponds
to a fixed point of $\boldsymbol{S}$ is an invariant manifold as
it is mapped into itself. In particular, we have $\boldsymbol{S}\left(\boldsymbol{0}\right)=\boldsymbol{0}$,
hence $\mathcal{L}_{\boldsymbol{0}}$ is an invariant manifold.
\begin{figure}
\begin{centering}
\includegraphics[width=0.5\linewidth]{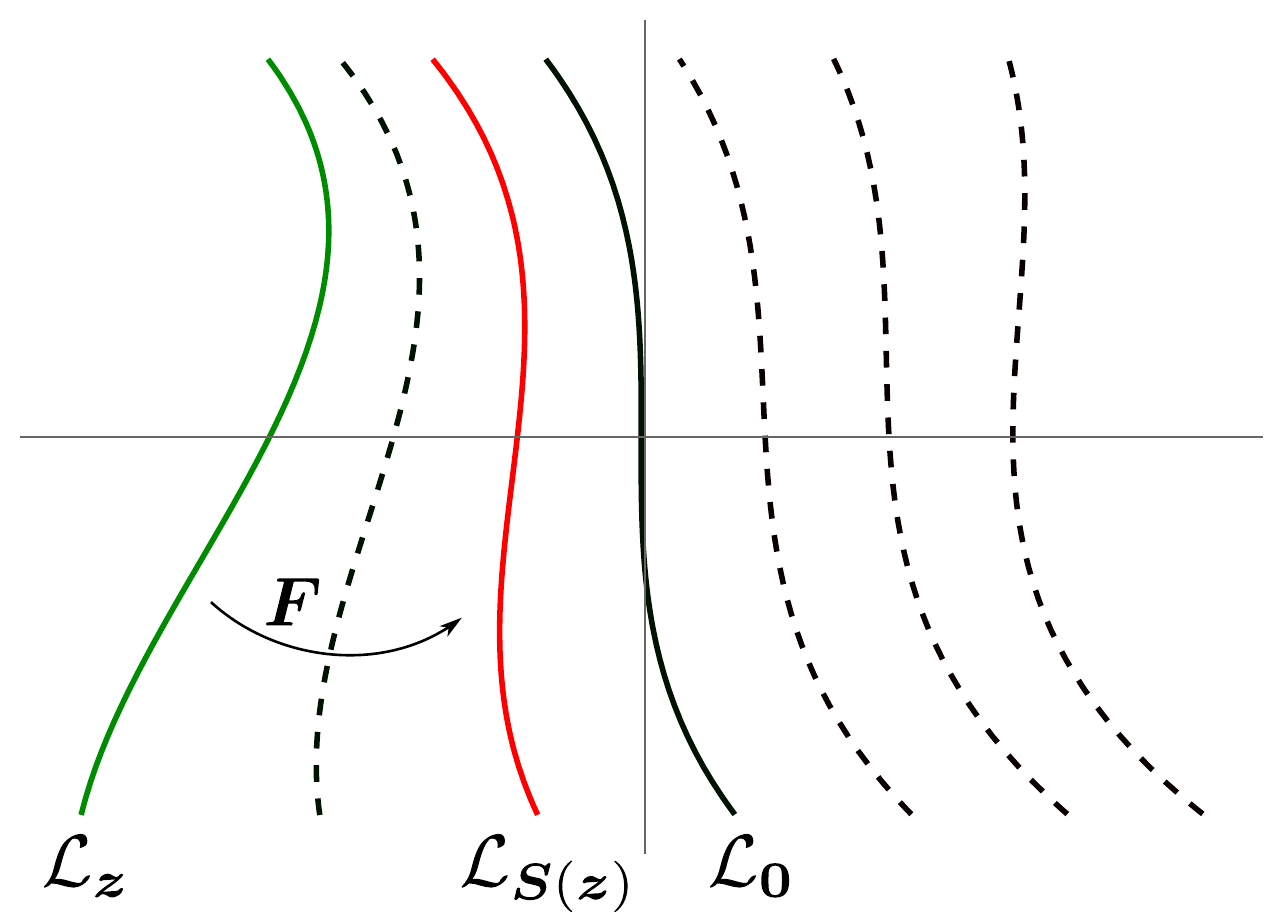}
\par\end{centering}
\caption{\label{fig:InvariantFibres}Invariant foliation. The leaf $\mathcal{L}_{\boldsymbol{z}}$
(green solid line) is mapped onto $\mathcal{L}_{\boldsymbol{S}\left(\boldsymbol{z}\right)}$
(red solid line) by $\boldsymbol{F}$. Leaf $\mathcal{L}_{\boldsymbol{0}}$
(black solid line) is an invariant manifold, because it contains the
origin and it is mapped onto itself by $\boldsymbol{F}$. Dashed lines
are other leaves.}
\end{figure}

The solution of the invariance equation (\ref{eq:adjInvariance})
with the tangency condition (\ref{eq:adjTangency}) is not unique
for a number of reasons. Firstly, assuming that there exist a pair
of functions $\boldsymbol{U}$ and $\boldsymbol{S}$ satisfying (\ref{eq:adjInvariance})
and (\ref{eq:adjTangency}) a large class of diffeomorphism $\boldsymbol{\Phi}:\mathbb{R}^{\nu}\to\mathbb{R}^{\nu}$
can be used, such that $\tilde{\boldsymbol{U}}=\boldsymbol{\Phi}\circ\boldsymbol{U}$
and $\tilde{\boldsymbol{S}}=\boldsymbol{\Phi}\circ\boldsymbol{U}\circ\boldsymbol{\Phi}^{-1}$
are also solutions of (\ref{eq:adjInvariance}) and (\ref{eq:adjTangency}).
However if two pairs of solutions of (\ref{eq:adjInvariance}) and
(\ref{eq:adjTangency}) are conjugate through a diffeomorphism $\boldsymbol{\Phi}$,
they represent the same invariant foliation $\mathcal{F}$. The kind
of non-uniqueness that is problematic when multiple solutions of (\ref{eq:adjInvariance})
and (\ref{eq:adjTangency}) are not conjugate and do not represent
the same foliation. To fix possible non-uniqueness, we impose extra
smoothness conditions on the submersion $\boldsymbol{U}$ in addition
to being differentiable and tangent to $E^{\star}$. We then call
the smoothest and unique foliation the \emph{invariant spectral foliation}
(ISF) corresponding to the linear subspace $E$.

\subsection{Vector fields}

In many applications the dynamics is defined by a vector field. Here
we recall that there is a one-to-one relationship between invariant
foliations of maps and vector fields \cite{Arnold1992ordinary}. Consider
the vector field $\boldsymbol{\dot{x}}=\boldsymbol{G}\left(\boldsymbol{x}\right)$,
which has a fundamental solution $\boldsymbol{\Phi}_{t}\left(\boldsymbol{x}\right)$,
such that 
\[
\frac{d}{dt}\boldsymbol{\Phi}_{t}\left(\boldsymbol{x}\right)=\boldsymbol{G}\left(\boldsymbol{\Phi}_{t}\left(\boldsymbol{x}\right)\right),\quad\boldsymbol{\Phi}_{0}\left(\boldsymbol{x}\right)=\boldsymbol{x}.
\]
Here $\boldsymbol{\Phi}_{t}$ is a one-parameter group, because $\boldsymbol{\Phi}_{t}\left(\boldsymbol{\Phi}_{s}\left(\boldsymbol{x}\right)\right)=\boldsymbol{\Phi}_{t+s}\left(\boldsymbol{x}\right)$
and $\boldsymbol{\Phi}_{0}\left(\boldsymbol{x}\right)=\boldsymbol{x}$.
If $\boldsymbol{G}$ is $C^{r}$ smooth then so is $\boldsymbol{\Phi}_{t}$.
We can now define the map $\boldsymbol{F}\left(\boldsymbol{x}\right)=\boldsymbol{\Phi}_{t}\left(\boldsymbol{x}\right)$,
which brings the invariance equation (\ref{eq:adjInvariance}) into
\begin{equation}
\boldsymbol{U}\left(\boldsymbol{\Phi}_{t}\left(\boldsymbol{x}\right)\right)=\boldsymbol{S}_{t}\left(\boldsymbol{U}\left(\boldsymbol{x}\right)\right).\label{eq:VFMapInvariance}
\end{equation}
The conjugate dynamics $\boldsymbol{S}$ must also be a one-parameter
group with $\boldsymbol{S}_{t+s}\left(\boldsymbol{x}\right)=\boldsymbol{S}_{t}\left(\boldsymbol{S}_{s}\left(\boldsymbol{x}\right)\right)$
and $\boldsymbol{S}_{0}\left(\boldsymbol{x}\right)=\boldsymbol{x}$
in order to satisfy the invariance equation, that is, 
\begin{align*}
\boldsymbol{U}\left(\boldsymbol{\Phi}_{t}\left(\boldsymbol{\Phi}_{s}\left(\boldsymbol{x}\right)\right)\right) & =\boldsymbol{S}_{t}\left(\boldsymbol{U}\left(\boldsymbol{\Phi}_{s}\left(\boldsymbol{x}\right)\right)\right)\\
\boldsymbol{U}\left(\boldsymbol{\Phi}_{t+s}\left(\boldsymbol{x}\right)\right) & =\boldsymbol{S}_{t}\left(\boldsymbol{S}_{s}\left(\boldsymbol{U}\left(\boldsymbol{x}\right)\right)\right)\\
\boldsymbol{U}\left(\boldsymbol{\Phi}_{t+s}\left(\boldsymbol{x}\right)\right) & =\boldsymbol{S}_{t+s}\left(\boldsymbol{U}\left(\boldsymbol{x}\right)\right).
\end{align*}
The infinitesimal generator of the group $\boldsymbol{S}$ is denoted
by $\boldsymbol{R}$, such that $\frac{d}{dt}\boldsymbol{S}_{t}\left(\boldsymbol{x}\right)=\boldsymbol{R}\left(\boldsymbol{S}_{t}\left(\boldsymbol{x}\right)\right)$.
On the other hand $\boldsymbol{U}$ must be independent of time, if
it is to define an invariant foliation. We now take the derivative
of the invariance equation (\ref{eq:VFMapInvariance}) with respect
to time and find
\begin{equation}
D\boldsymbol{U}\left(\boldsymbol{\Phi}_{t}\left(\boldsymbol{x}\right)\right)\boldsymbol{G}\left(\boldsymbol{\Phi}_{t}\left(\boldsymbol{x}\right)\right)=\boldsymbol{R}\left(\boldsymbol{S}_{t}\left(\boldsymbol{U}\left(\boldsymbol{x}\right)\right)\right).\label{eq:VFDeriInvariance}
\end{equation}
Setting $t=0$ in equation (\ref{eq:VFDeriInvariance}), we get the
invariance equation for vector fields in the form of
\begin{equation}
D\boldsymbol{U}\left(\boldsymbol{x}\right)\boldsymbol{G}\left(\boldsymbol{x}\right)=\boldsymbol{R}\left(\boldsymbol{U}\left(\boldsymbol{x}\right)\right).\label{eq:VFadjInvariance}
\end{equation}
The next example, which aims to illustrate non-uniqueness of foliations,
also shows that occasionally, it is easier to find an invariant foliation
using (\ref{eq:VFadjInvariance}) than using (\ref{eq:adjInvariance}).

\subsection{\label{subsec:UniqueExample}Example: smoothness and uniqueness of
foliations}

Let us consider the discrete-time map
\[
\begin{pmatrix}x_{k+1}\\
y_{k+1}
\end{pmatrix}=\begin{pmatrix}\mathrm{e}^{-\lambda}x_{k}\\
\mathrm{e}^{-\mu}y_{k}
\end{pmatrix},\,\lambda>0,\mu>0
\]
for which we can find an equivalent vector field in the form of
\begin{equation}
\begin{pmatrix}\dot{x}\\
\dot{y}
\end{pmatrix}=\begin{pmatrix}-\lambda x\\
-\mu y
\end{pmatrix},\label{eq:DELinExample}
\end{equation}
such that $x_{k}=x\left(k\right)$ and $y_{k}=y\left(k\right)$. The
solutions of system (\ref{eq:DELinExample}) lie on the curves $y\left(x\right)=c\mathrm{e}^{x\mu/\lambda}$,
$c\in\mathbb{R}$, $x\ge0$, as we only consider the right half-plane.
The invariance equation (\ref{eq:VFadjInvariance}), when (\ref{eq:DELinExample})
is substituted, becomes 
\[
-\lambda xD_{1}u\left(x,y\right)-\mu yD_{2}u\left(x,y\right)=r\left(u\left(x,y\right)\right),
\]
where $r$ describes the dynamics among the leaves of the invariant
foliation. Here, we have used non-bold, lower-case letters to represent
$\boldsymbol{U}=u$ and $\boldsymbol{R}=r$, because they assume scalar
values. Without restricting generality, we prescribe the parametrisation
of the foliation by setting $u\left(x,0\right)=x$, which implies
that $r\left(x\right)=-\lambda x$. We note that any other parametrisation
for which $\hat{u}\left(x,0\right)$ is a strictly monotonous (invertible)
and smooth function of $x$ can be brought into the special parametrisation
that we have just chosen, that is $u\left(x,y\right)=\hat{u}\left(\hat{u}^{-1}\left(x,0\right),y\right)$.
Using this parametrisation, the invariance equation then simplifies
to
\begin{equation}
-\lambda xD_{1}u\left(x,y\right)-\mu yD_{2}u\left(x,y\right)=-\lambda u\left(x,y\right).\label{eq:SimpleExInvariance}
\end{equation}
The solution of (\ref{eq:SimpleExInvariance}) is sought in the form
of $u\left(x,y\right)=xw\left(x,y\right)$, where $w$ has to satisfy
the somewhat simpler equation
\[
-\lambda xD_{1}w\left(x,y\right)-\mu yD_{2}w\left(x,y\right)=0.
\]
Using the method of characteristics and assuming the boundary condition
$w\left(1,y\right)=f\left(y\right)$ gives the general solution

\begin{equation}
u\left(x,y\right)=xf\left(x^{-\mu/\lambda}y\right),\label{eq:SimpleExU}
\end{equation}
where $f$ is an unknown, continuously differentiable function with
$f\left(0\right)=1$ due to the constraint on the parametrisation.

We now assume that $f$ is $m$ times differentiable, such that $f\left(x\right)=1+\sum_{k=1}^{m}a_{k}x^{k}+\mathcal{O}\left(x^{m+1}\right)$
and that $\lambda/\mu>m$. In this case the $k$-th order term of
$f$ leads to an order $1+k\left(1-\mu/\lambda\right)>1+k-k/m$ term
in $u$, that are continuously differentiable if and only if $k\le m$.
This implies that if $m<\lambda/\mu\le m+1$, function $f$ must assume
the form 
\[
f\left(x\right)=1+\sum_{k=1}^{m}a_{k}x^{k}
\]
for $u$ to be once differentiable. This means that the foliation
is non-unique and has $m$ free parameters. Repeating the same argument
but stipulating that the foliation must be $m$-times continuously
differentiable we find that $f=1$, which has no parameters and therefore
the invariant foliation becomes unique. Indeed, after differentiating
(\ref{eq:SimpleExU}) $m$-times, a $k$-th order term in $f$ results
in an order $1+k\left(1-\mu/\lambda\right)-m>1+k-m-k/m$ term in $D^{m}u$,
hence none of the terms apart from the constant one will lead to an
$m$-times differentiable $u$, and the only solution is $f=1$ meaning
that the unique submersion is $u\left(x,y\right)=x$. We also note
that in this example, the ISF is as smooth as the vector field, that
is analytic.

The $x$ variable represents the slow dynamics if $\lambda<\mu$.
In this case $\lambda/\mu<1$, which means that a differentiable foliation
is already unique. The result of this section is graphically illustrated
in figure \ref{fig:UniquenessExample}.

\begin{figure}
\begin{centering}
\includegraphics[width=0.33\linewidth]{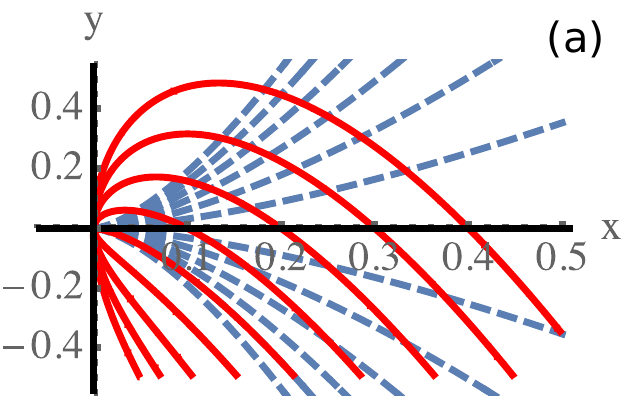}\includegraphics[width=0.33\linewidth]{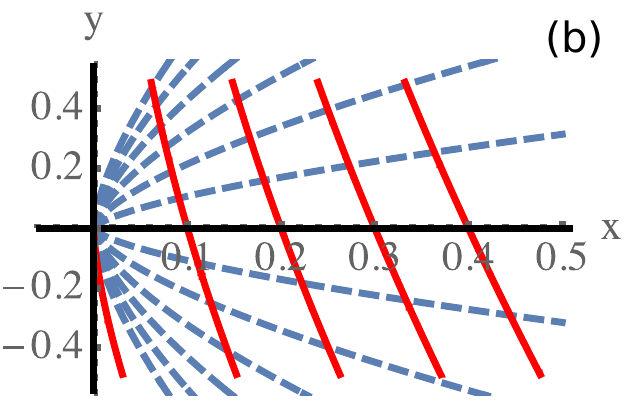}\includegraphics[width=0.33\linewidth]{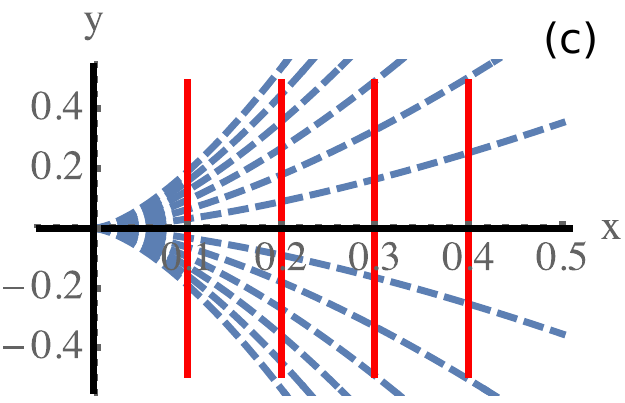}
\par\end{centering}
\caption{\label{fig:UniquenessExample}Uniqueness of foliations. Dashed blue
lines are trajectories of (\ref{eq:DELinExample}), red continuous
lines are the contours of $u\left(x,y\right)$ and represent the leaves
of the foliation. (a) $f=1+x/5$, $\lambda=2$, $\mu=3$, the resulting
$u$ does not define a differentiable foliation; (b) $f=1+x/5$, $\lambda=3$,
$\mu=2$, the foliation is once differentiable but not unique; (c)
$f=1$, $\lambda=2$, $\mu=3$ leads to the unique and differentiable
foliation.}

\end{figure}

\section{Existence and uniqueness of invariant foliations}

In this section we generalise the findings from the example in section
\ref{subsec:UniqueExample} and provide a sufficient condition for
the existence of a unique invariant foliation, i.e., an ISF. We start
with a definition.
\begin{definition}
The number
\[
\beth_{E^{\star}}=\frac{\min_{k=1\ldots\nu}\log\left|\mu_{k}\right|}{\max_{k=1\ldots n}\log\left|\mu_{k}\right|}
\]
is called the ISF spectral quotient of the linear subspace $E^{\star}$
about the origin.
\end{definition}
\begin{theorem}
\label{thm:MapFoliation}Assume that $\max_{k=1\ldots n}\left|\mu_{k}\right|<1$
and that there exists an integer $2\le\sigma\le r$, such that $\beth_{E^{\star}}<\sigma$.
Further assume that
\begin{equation}
\prod_{k=1}^{n}\mu_{k}^{m_{k}}\neq\mu_{j},\;j=1,\ldots,\nu\label{eq:MapNonResonance}
\end{equation}
for all integer $m_{k}\ge0$, $1\le k\le n$ with at least one $m_{l}\neq0$,
$\nu+1\le l\le n$ and with $2\le\sum_{k=0}^{n}m_{k}\le\sigma-1$.

Then the following are true:
\begin{enumerate}
\item In a sufficiently small neighbourhood of the origin there exists an
invariant foliation $\mathcal{F}$ tangent to the invariant linear
subspace $E^{\star}$ of the $C^{r}$ map $\boldsymbol{F}$. The foliation
$\mathcal{F}$ is unique among the $\sigma$-times differentiable
foliations and it is also $C^{r}$ smooth.
\item The conjugate dynamics of the invariant foliation $\mathcal{F}$,
given by the map $\boldsymbol{S}$ in equation (\ref{eq:adjInvariance})
can be represented by a polynomial of order $\sigma-1$. In its simplest
form $\boldsymbol{S}$ must include terms $\prod_{k=1}^{\nu}z_{k}^{m_{k}}$
in dimension $j$ for which
\begin{equation}
\prod_{k=1}^{\nu}\mu_{k}^{m_{k}}=\mu_{j},\;j=1,\ldots,\nu,\;2\le\sum_{k=0}^{\nu}m_{k}\le\sigma-1.\label{eq:MapInternalResonance}
\end{equation}
\end{enumerate}
\end{theorem}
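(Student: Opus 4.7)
The plan is to combine a formal normal-form reduction up to order $\sigma-1$ with a Banach fixed-point argument for the tail. I will seek $\boldsymbol{U}=\boldsymbol{P}+\boldsymbol{V}$, where $\boldsymbol{P}:\mathbb{R}^{n}\to\mathbb{R}^{\nu}$ is a polynomial of degree at most $\sigma-1$ and $\boldsymbol{V}$ has vanishing $(\sigma-1)$-jet at the origin; simultaneously $\boldsymbol{S}$ will be produced as a polynomial of degree at most $\sigma-1$ on $\mathbb{R}^{\nu}$. The hypothesis $\beth_{E^{\star}}<\sigma$ will turn out to be exactly the contraction condition for the linearised operator governing $\boldsymbol{V}$.

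First, after a linear change of coordinates diagonalising $\boldsymbol{A}$, the tangency condition (\ref{eq:adjTangency}) forces $D\boldsymbol{P}(\boldsymbol{0})$ to project onto $E^{\star}$ and $D\boldsymbol{S}(\boldsymbol{0})=\mathrm{diag}(\mu_{1},\dots,\mu_{\nu})$. Matching coefficients in (\ref{eq:adjInvariance}) degree by degree from $m=2$ to $m=\sigma-1$, the coefficient of the monomial $\boldsymbol{x}^{\boldsymbol{m}}$ in $P_{j}$ satisfies the cohomological equation
\[
\Bigl(\prod_{k=1}^{n}\mu_{k}^{m_{k}}-\mu_{j}\Bigr)P_{j}^{\boldsymbol{m}}=R_{j}^{\boldsymbol{m}}-Q_{j}^{\boldsymbol{m}},
\]
where $R_{j}^{\boldsymbol{m}}$ is known from lower orders and $Q_{j}^{\boldsymbol{m}}$ is the corresponding coefficient in $\boldsymbol{S}$ (present only when $m_{l}=0$ for all $l>\nu$). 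By (\ref{eq:MapNonResonance}), monomials with some external index have nonvanishing prefactor and are solved uniquely with $Q_{j}^{\boldsymbol{m}}=0$; internally resonant monomials satisfying (\ref{eq:MapInternalResonance}) must instead be absorbed into $\boldsymbol{S}$, while non-resonant internal monomials admit both choices (yielding the simplest form stated in part~(ii) by setting the corresponding $P_{j}^{\boldsymbol{m}}=0$).

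Next, substituting $\boldsymbol{U}=\boldsymbol{P}+\boldsymbol{V}$ into (\ref{eq:adjInvariance}) and isolating the $D\boldsymbol{S}(\boldsymbol{0})\boldsymbol{V}$ term gives the fixed-point equation
\[
\boldsymbol{V}(\boldsymbol{x})=D\boldsymbol{S}(\boldsymbol{0})^{-1}\bigl[\boldsymbol{V}(\boldsymbol{F}(\boldsymbol{x}))+\boldsymbol{N}(\boldsymbol{x},\boldsymbol{V})\bigr]\equiv\boldsymbol{T}(\boldsymbol{V})(\boldsymbol{x}),
\]
where $\boldsymbol{N}$ collects the polynomial defect (which vanishes to order $\sigma$ in $\boldsymbol{x}$ thanks to Step 1) and terms at least quadratic in $\boldsymbol{V}$. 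On the space $\mathcal{X}=\{\boldsymbol{V}\in C^{\sigma}(B_{\rho},\mathbb{R}^{\nu}):D^{k}\boldsymbol{V}(\boldsymbol{0})=0,\;0\le k\le\sigma-1\}$ with the $C^{\sigma}$-norm, a Fa\`a di Bruno estimate yields
\[
\|\boldsymbol{L}\boldsymbol{V}\|_{C^{\sigma}}\le\Bigl(\tfrac{|\mu_{\max}|^{\sigma}}{\min_{k\le\nu}|\mu_{k}|}+\mathcal{O}(\rho)\Bigr)\|\boldsymbol{V}\|_{C^{\sigma}}
\]
for the linear part $\boldsymbol{L}\boldsymbol{V}=D\boldsymbol{S}(\boldsymbol{0})^{-1}\boldsymbol{V}\circ\boldsymbol{F}$, with $|\mu_{\max}|=\max_{k}|\mu_{k}|$. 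The hypothesis $\beth_{E^{\star}}<\sigma$ translates (using that all logs are negative) exactly to $|\mu_{\max}|^{\sigma}<\min_{k\le\nu}|\mu_{k}|$, so the prefactor is strictly below one once $\rho$ is small. The nonlinear part of $\boldsymbol{T}$ is Lipschitz of order $\mathcal{O}(\rho)$ on a small ball of $\mathcal{X}$, so Banach's theorem delivers a unique $\boldsymbol{V}\in\mathcal{X}$.

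Finally, $C^{r}$-smoothness follows because $\boldsymbol{T}$ preserves the $C^{r}$-ball (since $\boldsymbol{F}\in C^{r}$ and $\boldsymbol{N}$ is polynomial in its second argument) while remaining contractive in the weaker $C^{\sigma}$-norm, so the fixed point inherits $C^{r}$ regularity. For uniqueness of the foliation, any other $\sigma$-times differentiable solution $(\tilde{\boldsymbol{U}},\tilde{\boldsymbol{S}})$ can be reparametrised by a diffeomorphism $\boldsymbol{\Phi}$, constructed degree-by-degree via the same cohomological equation, so that $\boldsymbol{\Phi}\circ\tilde{\boldsymbol{U}}$ has the canonical $(\sigma-1)$-jet $\boldsymbol{P}$; its tail then lies in $\mathcal{X}$ and must equal our fixed point, so $\tilde{\boldsymbol{U}}$ and $\boldsymbol{U}$ define the same foliation. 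The main obstacle is the contraction step: one must choose a Banach space on which the linearised composition operator has operator norm strictly below one \emph{and} the nonlinear remainder is a small Lipschitz perturbation. The spectral quotient hypothesis is tailored precisely to secure the former, while the truncation order $\sigma-1$ in $\boldsymbol{P}$ makes $\boldsymbol{N}$ flat enough in $\boldsymbol{x}$ to secure the latter.
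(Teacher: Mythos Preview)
Your proposal is correct and follows essentially the same three-step strategy as the paper: a polynomial truncation up to order $\sigma-1$ via the cohomological equations, a Banach fixed-point argument for the remainder, and uniqueness of the foliation by reparametrisation. The only noteworthy technical difference is the Banach space used in the contraction step: the paper equips the space of functions with vanishing $(\sigma-1)$-jet with the weighted sup norm $\|\boldsymbol{U}\|_{\sigma}=\sup_{|\boldsymbol{x}|\le1}|\boldsymbol{x}|^{-\sigma}|\boldsymbol{U}(\boldsymbol{x})|$ and achieves smallness by rescaling $\boldsymbol{F}_{\gamma}(\boldsymbol{x})=\gamma^{-1}\boldsymbol{F}(\gamma\boldsymbol{x})$, so the Lipschitz estimate is a one-line computation with no Fa\`a di Bruno needed; you instead use the full $C^{\sigma}$ norm on a small ball $B_{\rho}$. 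The two smallness mechanisms ($\gamma\to0$ versus $\rho\to0$) are equivalent, and both routes produce the same contraction threshold $|\mu_{\max}|^{\sigma}\bigl(\min_{k\le\nu}|\mu_{k}|\bigr)^{-1}<1$, which is exactly the hypothesis $\beth_{E^{\star}}<\sigma$. The paper also uses the full inverse $\boldsymbol{S}^{-1}$ rather than $D\boldsymbol{S}(\boldsymbol{0})^{-1}$ in the fixed-point operator, but since $\boldsymbol{S}$ is a polynomial close to its linear part after rescaling, this is an immaterial reshuffling of the nonlinear remainder.
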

\begin{proof}
The proof is carried out in appendix \ref{sec:ProofThm1}.
\end{proof}

\begin{remark}
\label{rem:ThmSuboptimal}From the conditions of theorem \ref{thm:MapFoliation}
it follows that $\beth_{E^{\star}}\ge1$. In case $\beth_{E^{\star}}=1$,
an invariant foliations is unique if it is at least twice differentiable.
This is however not a necessary condition, because for example (\ref{eq:DELinExample})
of section \ref{subsec:UniqueExample} once differentiability already
implied uniqueness.
\end{remark}
\begin{remark}
In contrast to SSMs, $D\boldsymbol{F}\left(\boldsymbol{0}\right)$
does not have to be invertible, only $D\boldsymbol{S}\left(\boldsymbol{0}\right)$
has to be invertible, that is, $\mu_{k}\neq0$ for $k=1\ldots\nu$.
If one were to extend the theory to Banach spaces, where typical dynamics
is not invertible (e.g.~delay equations, analytic semigroups, etc),
the lack of requirement on invertibility would allow wider application
of ISFs than SSMs. For example, in \cite{Kogelbauer2018}, the requirement
of invertibility demanded a special choice of damping added to a beam
model for an SSM to exist.
\end{remark}
\begin{remark}
For simplicity of presentation, the paper focusses on equilibria.
However, theorem \ref{thm:MapFoliation} is also applicable to periodic
orbits of vector fields, both autonomous and periodically forced,
where $\boldsymbol{F}$ is the Poincar\'e map associated with the
periodic orbit.
\end{remark}
The proof of theorem \ref{thm:MapFoliation} follows the same lines
that Cabr\'e et al.~\cite{CabreLlave2003} employ. First, a low-order
series expansion is carried out avoiding possible resonances. For
higher order terms, where no resonance is possible, Banach's contraction
mapping principle is applied to find a unique correction. Since the
series expansion allows a number of free parameters, we also show
that the choice of these parameters does not influence the geometry
of the foliation, only its parametrisation. This results in a unique
foliation. Differentiability follows from choosing $\sigma=r$.

Theorem \ref{thm:MapFoliation} also applies to $C^{r}$ vector fields
$\dot{\boldsymbol{x}}=\boldsymbol{G}\left(\boldsymbol{x}\right)$.
Again, we assume that the origin is the equilibrium, that is $\boldsymbol{G}\left(\boldsymbol{0}\right)=\boldsymbol{0}$
and that the Jacobian $\boldsymbol{B}=D\boldsymbol{G}\left(\boldsymbol{0}\right)$
is semisimple. The eigenvalues of $\boldsymbol{B}$ are denoted by
$\lambda_{i}$, $i=1,\ldots,n$ and we have a full set of left and
right eigenvectors, $\boldsymbol{v}_{i}^{\star}$ and $\boldsymbol{v}_{i}$,
that satisfy $\boldsymbol{v}_{i}^{\star}\boldsymbol{B}=\lambda_{i}\boldsymbol{v}_{i}^{\star}$
and $\boldsymbol{B}\boldsymbol{v}_{i}=\lambda_{i}\boldsymbol{v}_{i}$,
respectively. The invariant linear subspaces are defined as before:
$E=\mathrm{span}\left\{ \boldsymbol{v}_{1},\ldots,\boldsymbol{v}_{\nu}\right\} $
and $E^{\star}=\mathrm{span}\left\{ \boldsymbol{v}_{1}^{\star},\ldots,\boldsymbol{v}_{\nu}^{\star}\right\} $.

Using the spectral mapping theorem for the equivalence $\boldsymbol{A}=\exp\boldsymbol{B}\tau$,
$\tau>0$, we find that the ISF spectral quotient for a vector field
is 
\[
\beth_{E^{\star}}=\frac{\min_{k=1\ldots\nu}\Re\lambda_{k}}{\max_{k=1\ldots n}\Re\lambda_{k}}.
\]
Due to the equivalence between discrete-time dynamics and vector fields,
the following corollary is a direct consequence of theorem \ref{thm:MapFoliation}.
\begin{corollary}
\label{cor:VFFoliation}Assume that $\max_{k=1\ldots n}\Re\lambda_{k}<0$
and that there exists an integer $2\le\sigma\le r$, such that $\beth_{E^{\star}}<\sigma$.
Further assume that
\begin{equation}
\sum_{k=1}^{n}m_{k}\lambda_{k}\neq\lambda_{j},\;j=1,\ldots,\nu\label{eq:VFNonResonance}
\end{equation}
for all integer $m_{k}\ge0$, $1\le k\le n$ with at least one $m_{l}\neq0$,
$\nu+1\le l\le n$ and with $2\le\sum_{k=0}^{n}m_{k}\le\sigma-1$.

Then the following are true:
\begin{enumerate}
\item In a sufficiently small neighbourhood of the origin there exists an
invariant foliation $\mathcal{F}$ tangent to the invariant linear
subspace $E^{\star}$ of the $C^{r}$ vector field $\boldsymbol{G}$.
The foliation $\mathcal{F}$ is unique among the $\sigma$-times differentiable
foliations and it is also $C^{r}$ smooth.
\item The conjugate dynamics of the invariant foliation $\mathcal{F}$,
given by the vector field $\boldsymbol{R}$ in equation (\ref{eq:VFadjInvariance})
can be represented as a polynomial of order $\sigma-1$. In its simplest
form $\boldsymbol{R}$ must include terms $\prod_{k=1}^{\nu}z_{k}^{m_{k}}$
in dimension $j$ for which
\begin{equation}
\sum_{k=1}^{\nu}m_{k}\lambda_{k}=\lambda_{j},\;j=1,\ldots,\nu,\;2\le\sum_{k=0}^{\nu}m_{k}\le\sigma-1.\label{eq:VFInternalResonance}
\end{equation}
\end{enumerate}
\end{corollary}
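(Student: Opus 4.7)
The plan is to deduce Corollary \ref{cor:VFFoliation} from Theorem \ref{thm:MapFoliation} by applying the latter to the time-$\tau$ map $\boldsymbol{F}=\boldsymbol{\Phi}_\tau$ for a suitably chosen $\tau>0$. By the spectral mapping theorem the eigenvalues of $\boldsymbol{A}=\exp(\boldsymbol{B}\tau)$ are $\mu_k=\mathrm{e}^{\lambda_k\tau}$, with the same left and right eigenvectors as $\boldsymbol{B}$; the contraction requirement $|\mu_k|<1$ reduces to $\Re\lambda_k<0$, the identity $\log|\mu_k|=\tau\,\Re\lambda_k$ transports $\beth_{E^\star}$ without change, and the tangency subspace $E^\star$ is preserved.

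The first nontrivial step is to fix $\tau$ so that the map-level non-resonance (\ref{eq:MapNonResonance}) follows from the vector-field condition (\ref{eq:VFNonResonance}). The equality $\prod_k\mu_k^{m_k}=\mu_j$ is equivalent to $\sum_k m_k\lambda_k=\lambda_j+2\pi\mathrm{i}\,n/\tau$ for some $n\in\mathbb{Z}$; the case $n=0$ is excluded by (\ref{eq:VFNonResonance}), and since only finitely many multi-indices satisfy $2\le\sum_k m_k\le\sigma-1$, a small enough $\tau$ rules out every $n\neq 0$ uniformly across this finite set. Theorem \ref{thm:MapFoliation} then produces a $C^r$ submersion $\boldsymbol{U}$, unique among $\sigma$-smooth foliations tangent to $E^\star$, together with a conjugate map $\boldsymbol{S}$ that is polynomial of order $\sigma-1$ and satisfies $\boldsymbol{U}\circ\boldsymbol{\Phi}_\tau=\boldsymbol{S}\circ\boldsymbol{U}$.

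To promote $\boldsymbol{U}$ to an invariant foliation of the full flow, I will observe that for every $s\in\mathbb{R}$ the submersion $\tilde{\boldsymbol{U}}=\boldsymbol{U}\circ\boldsymbol{\Phi}_s$ also satisfies the invariance equation for $\boldsymbol{\Phi}_\tau$ by commutativity of the flow, and that $D\tilde{\boldsymbol{U}}(\boldsymbol{0})=D\boldsymbol{U}(\boldsymbol{0})\exp(\boldsymbol{B}s)$ still spans $E^\star$. Uniqueness in Theorem \ref{thm:MapFoliation} then yields a diffeomorphism $\boldsymbol{S}_s$ of a neighbourhood of $\boldsymbol{0}\in\mathbb{R}^\nu$ with $\boldsymbol{U}\circ\boldsymbol{\Phi}_s=\boldsymbol{S}_s\circ\boldsymbol{U}$; surjectivity of $\boldsymbol{U}$ near the origin and the flow identity $\boldsymbol{\Phi}_{s+t}=\boldsymbol{\Phi}_s\circ\boldsymbol{\Phi}_t$ force $\{\boldsymbol{S}_s\}$ to be a $C^r$ one-parameter group with $\boldsymbol{S}_\tau=\boldsymbol{S}$. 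Differentiating at $s=0$ recovers (\ref{eq:VFadjInvariance}) with $\boldsymbol{R}$ the infinitesimal generator of $\boldsymbol{S}_s$, and the polynomial form of $\boldsymbol{R}$ together with the internal-resonance description (\ref{eq:VFInternalResonance}) follows either by exponentiating (\ref{eq:MapInternalResonance}) under the chosen $\tau$, or by rerunning the normal-form part of the proof of Theorem \ref{thm:MapFoliation} directly in continuous time with small denominators $\sum m_k\lambda_k-\lambda_j$ in place of $\prod\mu_k^{m_k}-\mu_j$.

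The main obstacle is precisely the $\tau$-selection argument, because exponentiation is not injective on the spectrum: eigenvalue combinations differing by an integer multiple of $2\pi\mathrm{i}/\tau$ collapse to the same $\mu$-product, so the map-level non-resonance is a priori strictly weaker than its vector-field counterpart. The saving grace is that only finitely many multi-indices of total order at most $\sigma-1$ enter the test, so a sufficiently small $\tau>0$ always exists; once it is fixed, the remainder is a clean transfer along the map--flow correspondence already laid out in equations (\ref{eq:VFMapInvariance})--(\ref{eq:VFadjInvariance}).
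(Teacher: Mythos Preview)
Your approach is correct and coincides with the paper's: the corollary is obtained as a direct consequence of Theorem~\ref{thm:MapFoliation} applied to the time-$\tau$ map $\boldsymbol{F}=\boldsymbol{\Phi}_\tau$, transferring hypotheses via the spectral mapping relation $\boldsymbol{A}=\exp(\boldsymbol{B}\tau)$. In fact you supply considerably more detail than the paper does---in particular the $\tau$-selection argument avoiding spurious resonances modulo $2\pi\mathrm{i}/\tau$ and the uniqueness-based promotion of map-invariance to flow-invariance---whereas the paper simply declares the corollary a ``direct consequence'' of the discrete-time theorem and the map--flow correspondence of equations~(\ref{eq:VFMapInvariance})--(\ref{eq:VFadjInvariance}).
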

\begin{definition}
We say that the invariant foliation has an internal resonance if there
exist non-negative integers $m_{k}$, $k=1,\ldots,\nu$ for which
(\ref{eq:MapInternalResonance}) (or (\ref{eq:VFInternalResonance})
for vector fields) holds.
\end{definition}

\section{Fitting a codimension-two ISF to data}

In this section we outline how to find the submersion $\boldsymbol{U}$
and conjugate map $\boldsymbol{S}$ from a time-series without identifying
map $\boldsymbol{F}$ first. The procedure is based on the proof of
theorem \ref{thm:MapFoliation} in appendix \ref{sec:ProofThm1},
which uses normalising conditions to find a unique solution of the
invariance equation (\ref{eq:adjInvariance}). Here, we make the normalising
conditions applicable to a wide class of representations of the submersion
$\boldsymbol{U}$ and conjugate map $\boldsymbol{S}$ and not just
polynomials.

\subsection{\label{subsec:ReducedDynamics}Normalising the solution of the invariance
equation}

The construction of the fitting process is centred around near internal
resonances. We assume a pair of complex conjugate eigenvalues $\mu_{1}=\overline{\mu}_{2}$
corresponding to the invariant linear subspace $E^{\star}$. If the
dynamics is slow on the ISF compared to the rest of the system, we
have $\left|\mu_{1}\right|\approx1$, which implies that 
\begin{equation}
\left.\begin{array}{rl}
\mu_{1} & \approx\mu_{1}^{p+1}\mu_{2}^{p}\\
\mu_{2} & \approx\mu_{1}^{p}\mu_{2}^{p+1}
\end{array}\right\} \label{eq:FITinternalRes}
\end{equation}
for integers $1\le p<\sigma$. According to theorem \ref{thm:MapFoliation},
we can choose to represent the dynamics on the ISF in complex coordinates
as 
\begin{equation}
\tilde{\boldsymbol{S}}\left(z,\overline{z}\right)=\begin{pmatrix}\begin{array}{l}
\mu_{1}z+\sum_{p=1}^{\left\lfloor \sigma/2\right\rfloor }a_{p}z^{p+1}\overline{z}^{p}\\
\mu_{2}\overline{z}+\sum_{p=1}^{\left\lfloor \sigma/2\right\rfloor }\overline{a}_{p}z^{p}\overline{z}^{p+1}
\end{array}\end{pmatrix},\label{eq:FITcplxRedMod}
\end{equation}
where $z,a_{p}\in\mathbb{C}$. The choice of terms in (\ref{eq:FITcplxRedMod})
avoids diverging terms in the submersion $\boldsymbol{U}$ when $\left|\mu_{1}\right|\approx1$
as illustrated by formula (\ref{eq:NoInternalResonance}) in the proof
of theorem \ref{thm:MapFoliation}. Using the transformation $z=z_{1}+iz_{2}$,
with $z_{1},z_{2}\in\mathbb{R}$, the dynamics on the ISF (\ref{eq:FITcplxRedMod})
can be written in real coordinates as 
\begin{equation}
\hat{\boldsymbol{S}}\left(z_{1},z_{2}\right)=\begin{pmatrix}\begin{array}{l}
z_{1}\sum_{p=0}^{\left\lfloor \sigma/2\right\rfloor }b_{p}\left(z_{1}^{2}+z_{2}^{2}\right)^{p}-z_{2}\sum_{p=0}^{\left\lfloor \sigma/2\right\rfloor }c_{p}\left(z_{1}^{2}+z_{2}^{2}\right)^{p}\\
z_{1}\sum_{p=0}^{\left\lfloor \sigma/2\right\rfloor }c_{p}\left(z_{1}^{2}+z_{2}^{2}\right)^{p}+z_{2}\sum_{p=0}^{\left\lfloor \sigma/2\right\rfloor }b_{p}\left(z_{1}^{2}+z_{2}^{2}\right)^{p}
\end{array}\end{pmatrix},\label{eq:FITrealRedMod}
\end{equation}
where $b_{0}=\Re\mu_{1}$, $c_{0}=\Im\mu_{1}$ and $b_{p}=\Re a_{p}$,
$c_{p}=\Im a_{p}$, $p=1,\ldots,\left\lfloor \sigma/2\right\rfloor $.
To generalise even further, and allow the limit $\left|\mu_{1}\right|\to1$,
we can also write that
\begin{equation}
\boldsymbol{S}\left(z_{1},z_{2}\right)=\begin{pmatrix}\begin{array}{l}
z_{1}f_{r}\left(z_{1}^{2}+z_{2}^{2}\right)-z_{2}f_{i}\left(z_{1}^{2}+z_{2}^{2}\right)\\
z_{1}f_{i}\left(z_{1}^{2}+z_{2}^{2}\right)+z_{2}f_{r}\left(z_{1}^{2}+z_{2}^{2}\right)
\end{array}\end{pmatrix},\label{eq:FITgeneralRedMod}
\end{equation}
where $f_{r}$ and $f_{i}$ are unknown functions. We note that theorem
\ref{thm:MapFoliation} does not cover the case $\left|\mu_{1}\right|=1$,
however the invariance equation can be solved by the asymptotic expansion
described in appendix \ref{subsec:MAPexpansion} up to any order of
accuracy even when $\left|\mu_{1}\right|=1$. This suggests that the
invariance equation can also be solved numerically up to any order
of accuracy, when (\ref{eq:MapNonResonance}) holds and near internal
resonances are taken into account, even though the existence of a
unique solution is not known.

The dynamics on the ISF can be further analysed by introducing the
polar parametrisation $z_{1}=r\cos\theta$ and $z_{2}=r\sin\theta$.
In these coordinates equation (\ref{eq:FITgeneralRedMod}) is transformed
into
\begin{equation}
\breve{\boldsymbol{S}}\left(r,\theta\right)=\begin{pmatrix}\begin{array}{l}
{\displaystyle r\sqrt{f_{r}^{2}\left(r^{2}\right)+f_{i}^{2}\left(r^{2}\right)}}\\
{\displaystyle \theta+\tan^{-1}\frac{f_{i}\left(r^{2}\right)}{f_{r}\left(r^{2}\right)}}
\end{array}\end{pmatrix}.\label{eq:FITpolarRedMod}
\end{equation}
For a similar analysis see \cite[section 6]{Szalai20160759}. The
radial dynamics in (\ref{eq:FITpolarRedMod}) is decoupled from the
angular motion, therefore we can identify that $r=0$ is the fixed
point, and all solutions of $f_{r}^{2}\left(r^{2}\right)+f_{i}^{2}\left(r^{2}\right)=1$
for $r$ with $r>0$ represent periodic orbits. We assume that each
iteration of $\boldsymbol{F}$ and of $\breve{\boldsymbol{S}}$ accounts
for a period of time $T$ and therefore the instantaneous angular
frequency of rotation about the fixed point is given by 
\begin{equation}
\omega_{E^{\star}}\left(r\right)=T^{-1}\tan^{-1}\frac{f_{i}\left(r^{2}\right)}{f_{r}\left(r^{2}\right)}.\label{eq:FITfrequency}
\end{equation}
We also define the instantaneous damping ratio by 
\begin{equation}
\zeta_{E^{\star}}\left(r\right)=-\frac{\log\sqrt{f_{r}^{2}\left(r^{2}\right)+f_{i}^{2}\left(r^{2}\right)}}{T\omega_{E^{\star}}\left(r\right)},\label{eq:FITdamping}
\end{equation}
which agrees with the damping ratio of the linear dynamics about the
equilibrium at $r=0$. Unfortunately we cannot easily determine what
vibration amplitude $r$ represents, because there is no unique closed
curve in the phase space that is mapped by the submersion $\boldsymbol{U}$
to the circle $r\times[0,2\pi)$. This means that we cannot define
a \emph{backbone curve} in the same way as in \cite[section 6]{Szalai20160759}.
Instead, we define a surrogate for the amplitude in section \ref{subsec:BackboneCurves}.

Similarly, the submersion of the ISF needs to be normalised, because
in case of an internal resonance it is not fully specified. We are
now looking for a $\tilde{\boldsymbol{U}}$, which together with $\tilde{\boldsymbol{S}}$
satisfies the invariance equation (\ref{eq:adjInvariance}), and also
takes into account the near internal resonances (\ref{eq:FITinternalRes}).
In order to uncover the constraints on $\tilde{\boldsymbol{U}}$ that
eliminate the terms corresponding to near internal resonances, we
write that 
\[
\tilde{\boldsymbol{U}}\left(\boldsymbol{x}\right)=\boldsymbol{U}\left(\boldsymbol{v}_{1}^{\star}\boldsymbol{x},\boldsymbol{v}_{2}^{\star}\boldsymbol{x},\ldots,\boldsymbol{v}_{n}^{\star}\boldsymbol{x}\right),
\]
where $\boldsymbol{U}=\left(U_{1},U_{2}\right)^{T}$ and $U_{1},U_{2}$
form a complex conjugate pair, which have real and imaginary parts,
such that $U_{1}=U_{r}+iU_{i}$. Note that $\boldsymbol{U}$ is the
same submersion that is used in appendix \ref{sec:ProofThm1}, where
$\boldsymbol{F}$ was assumed to have a diagonal Jacobian at the origin.
Similarly, we decompose $\tilde{\boldsymbol{U}}=\left(\tilde{U}_{1},\tilde{U}_{2}\right)^{T}$
and $\tilde{U}_{1}=\tilde{U}_{r}+i\tilde{U}_{i}$ and define $\hat{\boldsymbol{U}}=\left(\hat{U}_{1},\hat{U}_{2}\right)\overset{\mathit{def}}{=}\left(\tilde{U}_{r},\tilde{U}_{i}\right)$,
which together with $\hat{\boldsymbol{S}}$ or $\boldsymbol{S}$ of
equations (\ref{eq:FITrealRedMod}) and (\ref{eq:FITgeneralRedMod}),
respectively, must satisfy the invariance equation (\ref{eq:adjInvariance}).
Due to our assumptions, the left and right eigenvectors satisfy $\boldsymbol{v}_{j}^{\star}\boldsymbol{v}_{k}=\delta_{jk}$,
where $\delta_{jk}$ is the Kronecker delta, hence we can write that
\[
\tilde{\boldsymbol{U}}\left(\sum_{j=1}^{n}\boldsymbol{v}_{j}z_{j}\right)=\boldsymbol{U}\left(z_{1},z_{2},\ldots,z_{n}\right).
\]
As in appendix \ref{subsec:MAPexpansion}, we recognise that the terms
corresponding to internal resonances are
\[
z_{1}^{p+1}z_{2}^{p}\;\text{and}\;z_{1}^{p}z_{2}^{p+1},\;p\ge1
\]
in $U_{1}$ and $U_{2}$, respectively, whose coefficients need to
vanish as per equation (\ref{eq:YesInternalResonance}). To remove
these terms, we set $z_{1}=r\mathrm{e}^{i\theta}$, $z_{2}=r\mathrm{e}^{-i\theta}$
which leads to internally resonant terms $r^{2p+1}\mathrm{e}^{i\theta}$
and $r^{2p+1}\mathrm{e}^{-i\theta}$ that are the only terms with
$\mathrm{e}^{i\theta}$ and $\mathrm{e}^{-i\theta}$ components in
the Fourier expansion of $\boldsymbol{U}$. In particular for $U_{1}$
only the linear term ($r\mathrm{e}^{i\theta}$ for $p=0$) is allowed
to contribute to a non-zero coefficient of $\mathrm{e}^{i\theta}$,
which means that the first Fourier coefficient must be 
\begin{equation}
\int_{0}^{2\pi}\mathrm{e}^{-i\theta}\cdot U_{1}\left(r\mathrm{e}^{i\theta},r\mathrm{e}^{-i\theta},0,\ldots,0\right)=2\pi r,\label{eq:FITcplxConstraint}
\end{equation}
where we assumed the normalisation $D_{k}U_{1}\left(0,\ldots,0\right)=\delta_{1k}$.
Since $U_{1}$ and $U_{2}$ are complex conjugate pairs, there is
no need for a similar condition for $U_{2}$. Instead, we expand the
constraint (\ref{eq:FITcplxConstraint}) using the real valued submersion
$\hat{\boldsymbol{U}}$, such that the final form of the constraint
becomes 
\begin{equation}
\left.\begin{array}{rl}
{\displaystyle \int_{0}^{2\pi}\hat{U}_{1}\left(\boldsymbol{v}_{r}r\cos\theta-\boldsymbol{v}_{i}r\sin\theta\right)\cos\theta+\hat{U}_{2}\left(\boldsymbol{v}_{r}r\cos\theta-\boldsymbol{v}_{i}r\sin\theta\right)\sin\theta\mathrm{d}\theta} & =2\pi r\\
{\displaystyle \int_{0}^{2\pi}\hat{U}_{2}\left(\boldsymbol{v}_{r}r\cos\theta-\boldsymbol{v}_{i}r\sin\theta\right)\cos\theta-\hat{U}_{1}\left(\boldsymbol{v}_{r}r\cos\theta-\boldsymbol{v}_{i}r\sin\theta\right)\sin\theta\mathrm{d}\theta} & =0
\end{array}\right\} ,\label{eq:FITnonResConstraint}
\end{equation}
where $\boldsymbol{v}_{r}=\Re\boldsymbol{v}_{1}$ and $\boldsymbol{v}_{i}=\Im\boldsymbol{v}_{1}$.
In what follows the constraints (\ref{eq:FITnonResConstraint}) will
turn into penalty terms added to the loss function of the optimisation
problem, whose minimum is the approximate pair of functions $\boldsymbol{U}$
and $\boldsymbol{S}$.

\subsection{\label{subsec:PenaltyOptimisation}The optimisation problem}

Let us assume a set of data points, given by $\left\{ \left(\boldsymbol{x}_{k},\boldsymbol{y}_{k}\right),\,k=1,\ldots,N\right\} $,
with the constraint that $\boldsymbol{y}_{k}=\boldsymbol{F}\left(\boldsymbol{x}_{k}\right)$.
In practice, $\left(\boldsymbol{x}_{k},\boldsymbol{y}_{k}\right)$
may be part of a set of trajectories, such that $\boldsymbol{y}_{k}=\boldsymbol{x}_{k+1}$
for ranges of subsequent indices $K_{j}\le k<K_{j+1}$, $1=K_{1}<K_{2}<\cdots<K_{M}=N$.
We also assume that there is an approximate knowledge of the Jacobian
of $\boldsymbol{F}$ about the equilibrium. To find the Jacobian one
can use standard linear regression that fits a linear model to the
data in the neighbourhood of the equilibrium \cite{boyd_vandenberghe_2018}. 

We further assume parametric representations of the submersion $\boldsymbol{U}$
and the map $\boldsymbol{S}$, such that $\boldsymbol{U}\left(\boldsymbol{0}\right)=\boldsymbol{0}$
and $\boldsymbol{S}$ has the form of (\ref{eq:FITgeneralRedMod}).
In particular, we use the notation $\boldsymbol{U}\left(\boldsymbol{x}\right)=\boldsymbol{U}\left(\boldsymbol{x};\boldsymbol{\Theta}_{\boldsymbol{U}}\right)$
and $\boldsymbol{S}\left(\boldsymbol{z}\right)=\boldsymbol{S}\left(\boldsymbol{z};\boldsymbol{\Theta}_{\boldsymbol{S}}\right)$,
where $\boldsymbol{\Theta}_{\boldsymbol{U}}$ and $\boldsymbol{\Theta}_{\boldsymbol{S}}$
are the parameters we are looking for. Functions $\boldsymbol{U}$
and $\boldsymbol{S}$ must satisfy the invariance equation (\ref{eq:adjInvariance})
at each point along the time-series with the smallest possible residual
error $\boldsymbol{r}_{k}$, that is
\begin{align*}
\boldsymbol{U}\left(\boldsymbol{y}_{k};\boldsymbol{\Theta}_{\boldsymbol{U}}\right) & =\boldsymbol{S}\left(\boldsymbol{U}\left(\boldsymbol{x}_{k};\boldsymbol{\Theta}_{\boldsymbol{U}}\right);\boldsymbol{\Theta}_{\boldsymbol{S}}\right)+\boldsymbol{r}_{k}.
\end{align*}
An obvious strategy to minimise the residual $\boldsymbol{r}_{k}$,
is to use the least-squares method. In particular, we use the scaled
norm (\ref{eq:SigmaNorm}) from the proof of theorem \ref{thm:MapFoliation}
in appendix \ref{subsec:ContractionMapping}, which guarantees a unique
solution. The loss term from the invariance equation is then
\begin{equation}
L_{i}\left(\boldsymbol{\Theta}_{\boldsymbol{U}},\boldsymbol{\Theta}_{\boldsymbol{S}}\right)=\sum_{k=1}^{N}\left|\boldsymbol{x}_{k}\right|^{-2\sigma}\left|\boldsymbol{U}\left(\boldsymbol{y}_{k};\boldsymbol{\Theta}_{\boldsymbol{U}}\right)-\boldsymbol{S}\left(\boldsymbol{U}\left(\boldsymbol{x}_{k};\boldsymbol{\Theta}_{\boldsymbol{U}}\right);\boldsymbol{\Theta}_{\boldsymbol{S}}\right)\right|^{2}.\label{eq:FITinvarianceLoss}
\end{equation}
We also need to ensure that the normalising conditions (\ref{eq:FITnonResConstraint})
are satisfied. We choose a two-dimensional mesh in polar coordinates,
that is $r_{j}=r_{\mathrm{max}}j/N_{r}$, $\theta_{k}=2\pi k/N_{\theta}$
and $\boldsymbol{v}_{jk}=\boldsymbol{v}_{r}r_{j}\cos\theta_{k}-\boldsymbol{v}_{i}r_{j}\sin\theta_{k}$
and define 
\begin{multline}
L_{n}\left(\boldsymbol{\Theta}_{\boldsymbol{U}}\right)=\sum_{j=1}^{N_{r}}\left(r_{j}^{-1}\sum_{k=1}^{N_{\theta}}\left(U_{1}\left(\boldsymbol{v}_{jk};\boldsymbol{\Theta}_{\boldsymbol{U}}\right)\cos\theta_{k}+U_{2}\left(\boldsymbol{v}_{jk};\boldsymbol{\Theta}_{\boldsymbol{U}}\right)\sin\theta_{k}\right)-\frac{N_{\theta}}{2}\right)^{2}+\\
+\sum_{j=1}^{N_{r}}\left(r_{j}^{-1}\sum_{k=1}^{N_{\theta}}\left(U_{2}\left(\boldsymbol{v}_{jk};\boldsymbol{\Theta}_{\boldsymbol{U}}\right)\cos\theta_{k}-U_{1}\left(\boldsymbol{v}_{jk};\boldsymbol{\Theta}_{\boldsymbol{U}}\right)\sin\theta_{k}\right)\right)^{2}.\label{eq:FITnormaliseLoss}
\end{multline}
The value of $r_{\mathrm{max}}$ is proportional to $\max_{k}\left|\boldsymbol{x}_{k}\right|$.
Our version of the least-squares optimisation problem can be written
as
\begin{equation}
\boldsymbol{\Theta}_{\boldsymbol{U}},\boldsymbol{\Theta}_{\boldsymbol{S}}=\arg\min\left(L_{i}\left(\boldsymbol{\Theta}_{\boldsymbol{U}},\boldsymbol{\Theta}_{\boldsymbol{S}}\right)+\beta L_{n}\left(\boldsymbol{\Theta}_{\boldsymbol{U}}\right)\right),\label{eq:SigmaLeastSquares}
\end{equation}
where $\beta>0$ is sufficiently large so that $\boldsymbol{U}$ continues
to satisfy the normalising conditions (\ref{eq:FITnonResConstraint}).
The optimisation must be initialised such that 
\begin{equation}
D_{1}\boldsymbol{U}\left(\boldsymbol{0};\boldsymbol{\Theta}_{\boldsymbol{U}}\right)\approx\begin{pmatrix}\begin{array}{l}
\Re\boldsymbol{v}_{1}^{\star}\\
\Im\boldsymbol{v}_{1}^{\star}
\end{array}\end{pmatrix}\;\text{and}\;f_{r}\left(0\right)\approx\Re\mu_{1},\;f_{i}\left(0\right)\approx\Im\mu_{1}.\label{eq:FIT_IC}
\end{equation}

\begin{remark}
An alternative to the normalising conditions (\ref{eq:FITnonResConstraint})
is to fix the norm of $D_{1}\boldsymbol{U}\left(\boldsymbol{0};\boldsymbol{\Theta}_{\boldsymbol{U}}\right)$,
by defining 
\begin{equation}
L_{n}\left(\boldsymbol{U}\right)=\left(\left\Vert D_{1}\boldsymbol{U}\left(\boldsymbol{0};\boldsymbol{\Theta}_{\boldsymbol{U}}\right)\right\Vert ^{2}-1\right)^{2}.\label{eq:FITDUnormLoss}
\end{equation}
\end{remark}
In this case, the optimisation (\ref{eq:SigmaLeastSquares}) will
not yield a unique result for $\boldsymbol{\Theta}_{\boldsymbol{U}},\boldsymbol{\Theta}_{\boldsymbol{S}}$,
however according to theorem \ref{thm:MapFoliation} the foliation
defined by the resulting $\boldsymbol{U}$ should represent the unique
foliation. The non-uniqueness comes from the possible choices of terms
in $\boldsymbol{U}$ and $\boldsymbol{S}$ relative to each other
at near internal resonances as described in appendix \ref{subsec:MAPexpansion}.

\subsection{\label{subsec:DATAexpansionOpt}Polynomial representation for optimisation}

Here we use a polynomial representation to carry out the optimisation
given by equation (\ref{eq:SigmaLeastSquares}). We represent the
unknown functions as polynomials of finite order $\alpha$, such that
\begin{align*}
\boldsymbol{U}\left(\boldsymbol{x};\boldsymbol{U}^{\boldsymbol{m}_{1}},\ldots,\boldsymbol{U}^{\boldsymbol{m}_{\#\left[n,\alpha\right]}}\right) & =\sum_{\boldsymbol{m}\in M_{n,\alpha}}\boldsymbol{U}^{\boldsymbol{m}}\boldsymbol{x}^{\boldsymbol{m}},\\
\boldsymbol{S}\left(\boldsymbol{z};\boldsymbol{S}^{\boldsymbol{m}_{1}},\ldots,\boldsymbol{S}^{\boldsymbol{m}_{\#\left[2,\alpha\right]}}\right) & =\sum_{\boldsymbol{m}\in M_{2,\alpha}}\boldsymbol{S}^{\boldsymbol{m}}\boldsymbol{z}^{\boldsymbol{m}},
\end{align*}
where the finite set is $M_{n,\alpha}=\left\{ \boldsymbol{m}\in\mathbb{N}^{n}:1\le\sum_{k=1}^{n}m_{k}\le\alpha\right\} $,
the unique elements of $M_{n,\alpha}$ are denoted by $\boldsymbol{m}_{1},\boldsymbol{m}_{2},\ldots,\boldsymbol{m}_{\#\left[n,\alpha\right]}$
and $\#\left[n,\alpha\right]=\binom{n+\alpha}{n}-1$ is the cardinality
of $M_{n,\alpha}$. The scalar values $\boldsymbol{x}^{\boldsymbol{m}}$
are defined as
\begin{equation}
\boldsymbol{x}^{\boldsymbol{m}}=x_{1}^{m_{1}}\cdots x_{n}^{m_{n}}\label{eq:MonomialDef}
\end{equation}
and $\boldsymbol{U}^{\boldsymbol{m}},\boldsymbol{S}^{\boldsymbol{m}}\in\mathbb{R}^{2}$.
We further define that $\left|\boldsymbol{m}\right|=\sum_{k=1}^{n}m_{k}$.
The multi-index notation implies that coefficients of linear terms
have indices given by unit vectors 
\[
\boldsymbol{e}_{k}=\left(\underset{1}{0},\ldots,\underset{k-1}{0},\underset{k}{1},\underset{k+1}{0}\ldots,\underset{n\text{ or }\nu}{0}\right).
\]
Matrices are consequently denoted as multi-indexed vectors, that is,
the element of a matrix in the $j$-th row and $k$-th column is written
as $U_{j}^{\boldsymbol{e}_{k}}$ or just simply the $k$-th column
vector of a matrix is written as $\boldsymbol{U}^{\boldsymbol{e}_{k}}$.
In order to arrive at the form of $\boldsymbol{S}$ given by (\ref{eq:FITrealRedMod}),
we need to set 
\[
\left.\begin{array}{ll}
S_{1}^{\left(1+2p,2\left(k-p\right)\right)} & ={\displaystyle \binom{k}{p}b_{k}}\\
S_{1}^{\left(2p,1+2\left(k-p\right)\right)} & ={\displaystyle -\binom{k}{p}c_{k}}\\
S_{2}^{\left(1+2p,2\left(k-p\right)\right)} & ={\displaystyle \binom{k}{p}c_{k}}\\
S_{2}^{\left(2p,1+2\left(k-p\right)\right)} & ={\displaystyle \binom{k}{p}b_{k}}
\end{array}\right\} \quad0\le k\le\left\lfloor \alpha/2\right\rfloor ,\;0\le p\le k.
\]
Finally, as per the notation of section \ref{subsec:PenaltyOptimisation},
the parameter arrays are given by 
\begin{align*}
\boldsymbol{\Theta}_{\boldsymbol{S}} & =\left(b_{0},\ldots b_{\left\lfloor \alpha/2\right\rfloor },c_{0},\ldots c_{\left\lfloor \alpha/2\right\rfloor }\right),\\
\boldsymbol{\Theta}_{\boldsymbol{U}} & =\left(\boldsymbol{U}^{\boldsymbol{m}_{1}},\ldots,\boldsymbol{U}^{\boldsymbol{m}_{\#\left[n,\alpha\right]}}\right).
\end{align*}
The starting point of the optimisation is using the eigenvalues and
left eigenvectors of the Jacobian at the origin
\begin{equation}
U_{1}^{\boldsymbol{e}_{k}}=\left[\Re\boldsymbol{v}_{1}^{\star}\right]_{k},\;U_{2}^{\boldsymbol{e}_{k}}=\left[\Im\boldsymbol{v}_{1}^{\star}\right]_{k},\;b_{0}=\Re\mu_{1},\;c_{0}=\Im\mu_{1},\label{eq:FITParIni}
\end{equation}
while the rest of the parameters can be initialised either randomly
or to zero. During the optimisation the values (\ref{eq:FITParIni})
are allowed to change to fit the data, the initialisation ensures
that the ISF converges to the chosen linear subspace $E^{\star}.$

The polynomial representation of the objective function in the optimisation
problem (\ref{eq:SigmaLeastSquares}) can be written as 
\begin{multline}
\mathit{loss}\left(\boldsymbol{\Theta}_{\boldsymbol{U}},\boldsymbol{\Theta}_{\boldsymbol{S}}\right)=\beta L_{n}\left(\boldsymbol{\Theta}_{\boldsymbol{U}}\right)+\\
\quad+\sum_{k=1}^{N}\left|\boldsymbol{x}_{k}\right|^{-2\sigma}\left|\sum_{\boldsymbol{m}\in M_{n,\alpha}}\boldsymbol{U}^{\boldsymbol{m}}\boldsymbol{y}_{k}^{\boldsymbol{m}}-\sum_{\boldsymbol{p}\in M_{2,\alpha}}\boldsymbol{S}^{\boldsymbol{p}}\left(\sum_{\boldsymbol{m}\in M_{n,\alpha}}\boldsymbol{U}^{\boldsymbol{m}}\boldsymbol{x}_{k}^{\boldsymbol{m}}\right)^{\boldsymbol{p}}\right|^{2},\label{eq:PolyObjFunction}
\end{multline}
where
\begin{multline}
L_{n}\left(\boldsymbol{\Theta}_{\boldsymbol{U}}\right)=\sum_{j=1}^{N_{r}}\left(\sum_{\boldsymbol{m}\in M_{n,\alpha}}\sum_{k=1}^{N_{\theta}}\left(U_{1}^{\boldsymbol{m}}\boldsymbol{c}_{jk}^{\boldsymbol{m}}+U_{2}^{\boldsymbol{m}}\boldsymbol{s}_{jk}^{\boldsymbol{m}}\right)-\frac{N_{\theta}}{2}\right)^{2}+\\
+\sum_{j=1}^{N_{r}}\left(\sum_{\boldsymbol{m}\in M_{n,\alpha}}\sum_{k=1}^{N_{\theta}}\left(U_{2}^{\boldsymbol{m}}\boldsymbol{c}_{jk}^{\boldsymbol{m}}-U_{1}^{\boldsymbol{m}}\boldsymbol{s}_{jk}^{\boldsymbol{m}}\right)\right)^{2}\label{eq:FITnormaliseLoss-1}
\end{multline}
and 
\begin{align*}
\boldsymbol{c}_{jk}^{\boldsymbol{m}} & =r_{j}^{\left|\boldsymbol{m}\right|-1}\left(\boldsymbol{v}_{r}\cos\theta_{k}-\boldsymbol{v}_{i}\sin\theta_{k}\right)^{\boldsymbol{m}}\cos\theta_{k},\\
\boldsymbol{s}_{jk}^{\boldsymbol{m}} & =r_{j}^{\left|\boldsymbol{m}\right|-1}\left(\boldsymbol{v}_{r}\cos\theta_{k}-\boldsymbol{v}_{i}\sin\theta_{k}\right)^{\boldsymbol{m}}\sin\theta_{k}.
\end{align*}
The penalty term $L_{n}$ uses the approximate right eigenvectors
$\boldsymbol{v}_{r}\pm i\boldsymbol{v}_{i}$, which do not adapt during
the optimisation. We do not expect that this causes inaccuracies,
because this is just one possible way of normalising the submersion
$\boldsymbol{U}$ which still represents the unique ISF. An inaccuracy
of the a-priori estimated eigenvectors $\boldsymbol{v}_{r}\pm i\boldsymbol{v}_{i}$
however will affect the conjugate map $\boldsymbol{S}$.

Another issue is that for a large $\beta$ the penalty term can overshadow
the actual loss function $L_{i}$, which may cause inaccuracies. On
the other hand, for smaller values of $\beta$ the constraint (\ref{eq:FITnonResConstraint})
may not hold accurately. It is however much less important to satisfy
the constraint accurately than finding the minimum of $L_{i}$, because
the constraint (\ref{eq:FITnonResConstraint}) only affects the parametrisation
of the ISF and not its geometry. Therefore the value of $\beta$ can
be limited so that the minimum of the penalised loss function remains
close to the minimum of $L_{n}$. Alternatively, one can use constrained
optimisation, such as sequential quadratic programming \cite{nocedal2000numerical}
to take (\ref{eq:FITnonResConstraint}) into account with full numerical
accuracy.

From experience with other model identification studies, we believe
that accuracy can be improved if not just two consecutive points,
but multiple points along a trajectory are taken into account. This
leads to a so-called 'multiple shooting' technique \cite{Bock1983},
which will be part of a further investigation.

In our implementation we use the \noun{Optim.jl} \cite{mogensen2018optim}
package of the \noun{Julia} programming language and choose the BFGS
method to find an optimal solution. This only requires the gradient
of $\mathit{loss}$, which can be calculated by automatic differentiation.

\section{Analysis of ISFs}

\subsection{Reconstructing the dynamics}

Two or more carefully selected ISFs can act as a nonlinear coordinate
system of the state space and therefore can be used to reconstruct
the dynamics of $\boldsymbol{F}$. Let $E_{j}^{\star}$, $j=1,\ldots,q$
be invariant linear subspaces, satisfying the conditions of theorem
\ref{thm:MapFoliation}, such that 
\begin{equation}
\left.\begin{array}{rl}
E_{j}^{\star}\cap E_{k}^{\star} & =\left\{ \mathbf{0}\right\} ,\;\forall j\neq k\\
E_{1}^{\star}\oplus E_{2}^{\star}\cdots\oplus E_{q}^{\star} & =\mathbb{R}^{n}
\end{array}\right\} \label{eq:IntersectCond}
\end{equation}
and let the corresponding submersions of the ISFs be denoted by $\boldsymbol{U}^{j}$.
Further, assume trajectories $\boldsymbol{x}_{k}$, and $\boldsymbol{z}_{j,k}$,
$k\in\mathbb{N}$ satisfying $\boldsymbol{x}_{k+1}=\boldsymbol{F}\left(\boldsymbol{x}_{k}\right)$,
$\boldsymbol{z}_{j,k+1}=\boldsymbol{S}^{j}\left(\boldsymbol{z}_{j,k}\right)$
with matching initial conditions, that is, $\boldsymbol{z}_{j,0}=\boldsymbol{U}^{j}\left(\boldsymbol{x}_{0}\right)$.
Because of the invariance of ISFs, the trajectories satisfy the equation
\begin{equation}
\begin{pmatrix}\boldsymbol{z}_{1,k}\\
\vdots\\
\boldsymbol{z}_{q,k}
\end{pmatrix}=\widehat{\boldsymbol{U}}\left(\boldsymbol{x}_{k}\right)\overset{\mathit{def}}{=}\begin{pmatrix}\boldsymbol{U}^{1}\left(\boldsymbol{x}_{k}\right)\\
\vdots\\
\boldsymbol{U}^{q}\left(\boldsymbol{x}_{k}\right)
\end{pmatrix},\;k=1,2,\ldots\;.\label{eq:FullToRed}
\end{equation}
Due to our assumptions about $E_{j}^{\star}$, we can invert $\widehat{\boldsymbol{U}}$
in a neighbourhood of the origin and therefore there exist a function
$\boldsymbol{h}$, such that 
\begin{equation}
\widehat{\boldsymbol{U}}\left(\boldsymbol{h}\left(\boldsymbol{z}_{1},\ldots,\boldsymbol{z}_{q}\right)\right)=\begin{pmatrix}\boldsymbol{z}_{1}\\
\vdots\\
\boldsymbol{z}_{2}
\end{pmatrix}.\label{eq:RestoreDef}
\end{equation}
Using $\boldsymbol{h}$, the equivalence of the trajectories is expressed
as
\begin{equation}
\boldsymbol{x}_{k}=\boldsymbol{h}\left(\boldsymbol{z}_{1,k},\ldots,\boldsymbol{z}_{q,k}\right),\;k=1,2,\ldots\;.\label{eq:RedToFull}
\end{equation}
Equation (\ref{eq:RedToFull}) can be used to reconstruct the full
dynamics of the system from the lower order conjugate dynamics of
the ISFs. The equivalence is the same between the trajectories of
the vector fields $\boldsymbol{G}$, $\boldsymbol{R}^{j}$, except
that the subscript $k$ is replaced by time $t$.

Function $\boldsymbol{h}$ can be obtained by a fixed point iteration
in a small neighbourhood of the origin. Let us denote $\boldsymbol{C}=D\widehat{\boldsymbol{U}}\left(\boldsymbol{0}\right)$
and decompose $\widehat{\boldsymbol{U}}\left(\boldsymbol{x}\right)=\boldsymbol{C}\boldsymbol{x}+\widehat{\boldsymbol{U}}_{N}\left(\boldsymbol{x}\right)$,
such that $\widehat{\boldsymbol{U}}_{N}\left(\boldsymbol{x}\right)=\mathcal{O}\left(\left|\boldsymbol{x}\right|^{2}\right)$.
Due to our assumptions (\ref{eq:IntersectCond}), we infer that $\boldsymbol{C}$
is invertible, therefore the iteration makes sense
\begin{equation}
\boldsymbol{h}_{l+1}\left(\boldsymbol{z}_{1},\ldots,\boldsymbol{z}_{q}\right)=\boldsymbol{C}^{-1}\begin{pmatrix}\boldsymbol{z}_{1}\\
\vdots\\
\boldsymbol{z}_{q}
\end{pmatrix}-\boldsymbol{C}^{-1}\widehat{\boldsymbol{U}}_{N}\left(\boldsymbol{h}_{l}\left(\boldsymbol{z}_{1},\ldots,\boldsymbol{z}_{q}\right)\right),\;\boldsymbol{h}_{0}\left(\boldsymbol{z}_{1},\ldots,\boldsymbol{z}_{q}\right)=\boldsymbol{0}.\label{eq:InverseSubmersion}
\end{equation}
The iteration (\ref{eq:InverseSubmersion}) converges in a neighbourhood
of the origin, where $\boldsymbol{C}^{-1}\widehat{\boldsymbol{U}}_{N}$
is a contraction \cite{agarwal2018fixed}. For polynomials of a given
order the iteration always converges in finite number of steps if
the resulting polynomial is truncated to a finite order at each iteration.

Finding function $\boldsymbol{h}$ recovers all the SSMs of the system
at the same time. Let $j\in\left\{ 1,\ldots,q\right\} $. It is quick
to show that
\begin{equation}
\boldsymbol{W}^{j}\left(\boldsymbol{z}_{j}\right)=\boldsymbol{h}\left(\boldsymbol{0},\ldots,\boldsymbol{0},\boldsymbol{z}_{j},\boldsymbol{0},\ldots,\boldsymbol{0}\right)\label{eq:SSMimmersion}
\end{equation}
is the immersion of the SSM and $\boldsymbol{S}^{j}$ is the SSM conjugate
dynamics. Indeed, applying $\boldsymbol{W}^{j}$ to the invariance
equation (\ref{eq:adjInvariance}) from both sides gives
\[
\boldsymbol{W}^{j}\circ\boldsymbol{U}^{j}\circ\boldsymbol{F}\circ\boldsymbol{W}^{j}=\boldsymbol{W}^{j}\circ\boldsymbol{S}^{j}\circ\boldsymbol{U}^{j}\circ\boldsymbol{W}^{j},
\]
where we notice that $\boldsymbol{U}^{j}\circ\boldsymbol{W}^{j}$
is the identity, by construction, and $\boldsymbol{W}^{j}\circ\boldsymbol{U}^{j}$
is a projection, and also the identity on the range of $\boldsymbol{F}\circ\boldsymbol{W}^{j}$.
Therefore we are left with the SSM invariance equation
\[
\boldsymbol{F}\circ\boldsymbol{W}^{j}=\boldsymbol{W}^{j}\circ\boldsymbol{S}^{j},
\]
which proves our statement.

\begin{remark}
\label{rem:LeavesFromInverse}The leaves of the ISF can be explicitly
constructed using the function $\boldsymbol{h}$, that is 
\[
\mathcal{L}_{\boldsymbol{z}}^{j}=\left\{ \boldsymbol{h}\left(\boldsymbol{c}_{1},\ldots,\boldsymbol{c}_{j-1},\boldsymbol{z},\boldsymbol{c}_{j+1},\ldots,\boldsymbol{c}_{q}\right):\boldsymbol{c}_{l}\in\mathbb{R}^{\nu_{l}},l=1,\ldots,q,l\neq j\right\} .
\]
However the information about the foliation $\mathcal{F}^{j}$ is
already encoded in the submersion $\boldsymbol{U}^{j}$, hence finding
a full set of foliations satisfying (\ref{eq:IntersectCond}) and
then calculating $\boldsymbol{h}$ is inefficient. In the next section,
we develop a more efficient technique to find explicit expressions
for $\mathcal{L}_{\boldsymbol{z}}^{j}$.
\end{remark}

\subsection{\label{subsec:LeavesCalc}The leaves of an ISF}

Each leaf of an ISF is given implicitly by (\ref{eq:LeafDefinition}).
It is however possible to describe a leaf explicitly as a forward
image of a manifold immersion without relying on the inefficient construction
of remark \ref{rem:LeavesFromInverse}. An explicit expressions for
a leaf allows us to find an SSM as $\mathcal{L}_{\boldsymbol{0}}$
or visualise the leaves of the foliation as surfaces (or lines). It
will also aid us to define backbone curves in section \ref{subsec:BackboneCurves}.

We construct the family of immersions $\boldsymbol{W}_{\boldsymbol{z}}:\mathbb{R}^{n-\nu}\to\mathbb{R}^{n}$
from a submersion $\boldsymbol{U}:\mathbb{R}^{n}\to\mathbb{R}^{\nu}$,
such that a leaf within a foliation is given by 
\begin{equation}
\mathcal{L}_{\boldsymbol{z}}=\left\{ \boldsymbol{W}_{\boldsymbol{z}}\left(\boldsymbol{y}\right):\boldsymbol{y}\in\mathbb{R}^{n-\nu}\right\} .\label{eq:LeafAsGraph}
\end{equation}
To achieve this we are solving the under-determined equation 
\begin{equation}
\boldsymbol{z}=\boldsymbol{\boldsymbol{U}}\left(\boldsymbol{W}_{\boldsymbol{z}}\left(\boldsymbol{y}\right)\right)\label{eq:LeafImmersionEquation}
\end{equation}
under additional constraints, which allows a unique solution. We assume
that the immersion has the form
\begin{equation}
\boldsymbol{W}_{\boldsymbol{z}}\left(\boldsymbol{y}\right)=\boldsymbol{V}_{\perp}\boldsymbol{y}+\boldsymbol{V}_{\parallel}\boldsymbol{g}\left(\boldsymbol{z},\boldsymbol{y}\right),\label{eq:LeafImmersionDef}
\end{equation}
where $\boldsymbol{g}:\mathbb{R}^{\nu}\times\mathbb{R}^{n-\nu}\to\mathbb{R}^{\nu}$
is an unknown function. First we choose matrices $\boldsymbol{V}_{\perp}$
and $\boldsymbol{V}_{\parallel}$, such that 
\begin{equation}
D\boldsymbol{U}\left(\boldsymbol{0}\right)\boldsymbol{V}_{\perp}=\boldsymbol{0},\;D\boldsymbol{U}\left(\boldsymbol{0}\right)\boldsymbol{V}_{\parallel}=\boldsymbol{I},\;\boldsymbol{V}_{\perp}^{T}\boldsymbol{V}_{\parallel}=\boldsymbol{0}\;\text{and}\;\boldsymbol{V}_{\perp}^{T}\boldsymbol{V}_{\perp}=\boldsymbol{I}.\label{eq:LeafProjConstr}
\end{equation}
This choice constrained by (\ref{eq:LeafProjConstr}) allows for a
unique solution of $\boldsymbol{g}$ in formula (\ref{eq:LeafImmersionDef})
through the defining equation (\ref{eq:LeafImmersionEquation}). Note
that the linear subspace $E_{\parallel}$ spanned by $\boldsymbol{V}_{\parallel}$
can also be defined as 
\begin{equation}
E_{\parallel}=\left\{ \arg\min_{\boldsymbol{x}}\left|D\boldsymbol{U}\left(\boldsymbol{0}\right)\boldsymbol{x}-\boldsymbol{\xi}\right|:\boldsymbol{\xi}\in\mathbb{R}^{2}\right\} .\label{eq:Eparallel}
\end{equation}
The construction of $\boldsymbol{W}_{\boldsymbol{z}}$ is illustrated
in figure \ref{fig:LeafImmersion}.
\begin{figure}
\begin{centering}
\includegraphics[scale=0.5]{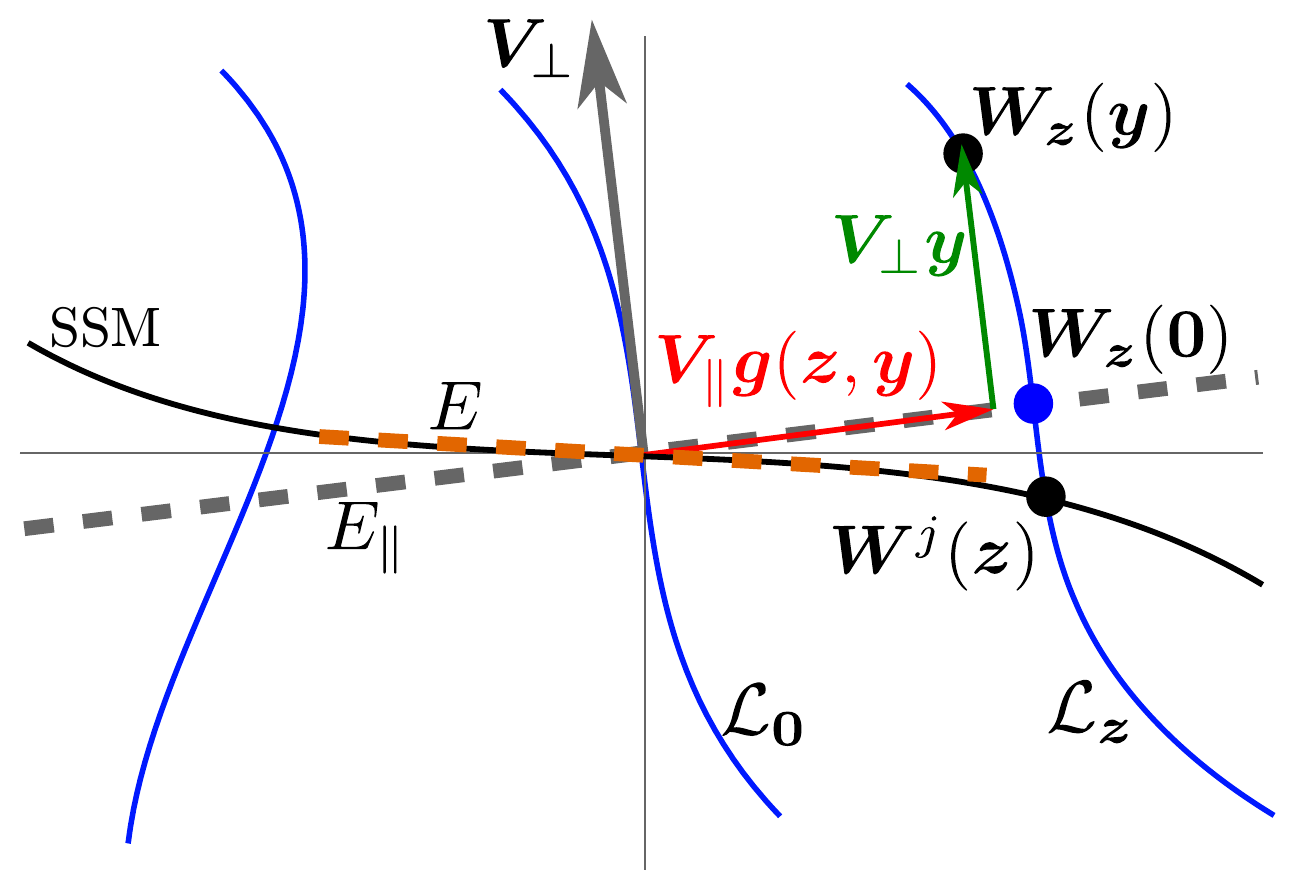}
\par\end{centering}
\caption{\label{fig:LeafImmersion}Finding the immersion $\boldsymbol{W}_{\boldsymbol{z}}$
of a leaf $\mathcal{L}_{\boldsymbol{z}}$ in the form of equation
(\ref{eq:LeafImmersionDef}). The leaf $\mathcal{L}_{\boldsymbol{z}}$
is represented as a graph over the linear subspace spanned by $\boldsymbol{V}_{\perp}$.
This representation breaks down at points where the tangent space
of $\mathcal{L}_{\boldsymbol{z}}$ is parallel with $E_{\parallel}$.
The SSM tangent to the linear subspace $E$ is also illustrated, which
coincides with $E_{\parallel}$ under the conditions outlined in remark
\ref{rem:EparNearInvariance}. When $E_{\parallel}=E$, $\boldsymbol{W}_{\boldsymbol{z}}\left(\boldsymbol{0}\right)$
is linearly asymptotic to the SSM at the origin.}
\end{figure}
We also decompose the submersion $\boldsymbol{U}$ into a linear and
nonlinear part, such that $\boldsymbol{U}\left(\boldsymbol{x}\right)=D\boldsymbol{U}\left(\boldsymbol{0}\right)\boldsymbol{x}+\boldsymbol{U}_{N}\left(\boldsymbol{x}\right)$,
then expand equation (\ref{eq:LeafImmersionEquation}) into
\begin{equation}
\boldsymbol{z}=\boldsymbol{g}\left(\boldsymbol{z},\boldsymbol{y}\right)+\boldsymbol{U}_{N}\left(\boldsymbol{V}_{\perp}\boldsymbol{y}+\boldsymbol{V}_{\parallel}\boldsymbol{g}\left(\boldsymbol{z},\boldsymbol{y}\right)\right).\label{eq:LeafGraphPreIt}
\end{equation}
Equation (\ref{eq:LeafGraphPreIt}) can be rearranged into a contraction
mapping iteration \cite{agarwal2018fixed}, that is
\begin{equation}
\boldsymbol{g}_{j+1}\left(\boldsymbol{z},\boldsymbol{y}\right)=\boldsymbol{z}-\boldsymbol{U}_{N}\left(\boldsymbol{V}_{\perp}\boldsymbol{y}+\boldsymbol{V}_{\parallel}\boldsymbol{g}_{j}\left(\boldsymbol{z},\boldsymbol{y}\right)\right),\;\boldsymbol{g}_{0}\left(\boldsymbol{z},\boldsymbol{y}\right)=\boldsymbol{z}.\label{eq:LeafGraphIteration}
\end{equation}
Due to $\boldsymbol{U}_{N}\left(\boldsymbol{x}\right)=\mathcal{O}\left(\left|\boldsymbol{x}\right|^{2}\right)$,
the iteration is indeed a contraction within a sufficiently small
neighbourhood of the origin. If $\boldsymbol{U}$ is a polynomial
of order $\alpha$, and we seek $\boldsymbol{g}$ as another polynomial
of order $\alpha$, then the iteration (\ref{eq:LeafGraphIteration})
finishes in $\alpha$ steps.

We now show that $\boldsymbol{V}_{\perp}$ and $\boldsymbol{V}_{\parallel}$
can be found using singular value decomposition \cite{Linalg}. The
singular value decomposition in our case can be written as 
\begin{equation}
\begin{pmatrix}D\boldsymbol{U}\left(\boldsymbol{0}\right)\\
\boldsymbol{0}_{\left(n-\nu\right)\times n}
\end{pmatrix}=\begin{pmatrix}\boldsymbol{\varUpsilon}_{\parallel} & \boldsymbol{0}\\
\boldsymbol{0} & \boldsymbol{\varUpsilon}_{\perp}
\end{pmatrix}\begin{pmatrix}\boldsymbol{\varSigma} & \boldsymbol{0}\\
\boldsymbol{0} & \boldsymbol{0}
\end{pmatrix}\begin{pmatrix}\tilde{\boldsymbol{V}}_{\parallel} & \tilde{\boldsymbol{V}}_{\perp}\end{pmatrix}^{T},\label{eq:ULsvd}
\end{equation}
where $\boldsymbol{\varSigma}$ is diagonal, $\boldsymbol{\varUpsilon}_{\parallel}$,
$\boldsymbol{\varUpsilon}_{\perp}$ and $\begin{pmatrix}\tilde{\boldsymbol{V}}_{\parallel} & \tilde{\boldsymbol{V}}_{\perp}\end{pmatrix}^{T}$
are orthonormal matrices. We now multiply (\ref{eq:ULsvd}) by $\begin{pmatrix}\tilde{\boldsymbol{V}}_{\parallel} & \tilde{\boldsymbol{V}}_{\perp}\end{pmatrix}$
from the left to check the constraints (\ref{eq:LeafProjConstr})
and we find that 
\[
D\boldsymbol{U}\left(\boldsymbol{0}\right)\tilde{\boldsymbol{V}}_{\parallel}=\boldsymbol{\varUpsilon}_{\parallel}\boldsymbol{\varSigma},\;D\boldsymbol{U}\left(\boldsymbol{0}\right)\tilde{\boldsymbol{V}}_{\perp}=\boldsymbol{0},
\]
where $\boldsymbol{\varUpsilon}_{\parallel}\boldsymbol{\varSigma}$
is invertible. Therefore, we find that $\boldsymbol{V}_{\parallel}=\tilde{\boldsymbol{V}}_{\parallel}\left(\boldsymbol{\varUpsilon}_{\parallel}\boldsymbol{\varSigma}\right)^{-1}$
and $\boldsymbol{V}_{\perp}=\tilde{\boldsymbol{V}}_{\perp}$.
\begin{remark}
\label{rem:EparNearInvariance}Here we justify our choice of representation
(\ref{eq:LeafImmersionDef}) for lightly damped mechanical systems.
For other kinds of systems a different representation may be necessary.
First we note that the range of matrix $\boldsymbol{V}_{\perp}$ is
always invariant under the Jacobian $\boldsymbol{A}$, however the
range of $\boldsymbol{V}_{\parallel}$ (i.e. $E_{\parallel}$) is
not. The range of $\boldsymbol{V}_{\perp}$ is invariant if there
exists a matrix $\boldsymbol{P}_{\perp}$, such that $\boldsymbol{A}\boldsymbol{V}_{\perp}=\boldsymbol{V}_{\perp}\boldsymbol{P}_{\perp}$.
In the most general case, we have the decomposition 
\begin{equation}
\boldsymbol{A}\boldsymbol{V}_{\perp}=\boldsymbol{V}_{\perp}\boldsymbol{P}_{\perp}+\boldsymbol{V}_{\parallel}\boldsymbol{P}_{\parallel}.\label{eq:VperpInvariance}
\end{equation}
Applying $D\boldsymbol{U}\left(\boldsymbol{0}\right)$ from the left
to (\ref{eq:VperpInvariance}) and noticing that 
\[
D\boldsymbol{U}\left(\boldsymbol{0}\right)\boldsymbol{A}\boldsymbol{V}_{\perp}=D\boldsymbol{S}\left(\boldsymbol{0}\right)D\boldsymbol{U}\left(\boldsymbol{0}\right)\boldsymbol{V}_{\perp}=\boldsymbol{0},
\]
we find that $\boldsymbol{P}_{\parallel}=\boldsymbol{0}$, which proves
the invariance of $\boldsymbol{V}_{\perp}$. A similar calculation
can be carried out for $\boldsymbol{V}_{\parallel}$ by using the
decomposition
\begin{equation}
\boldsymbol{A}\boldsymbol{V}_{\parallel}=\boldsymbol{V}_{\perp}\boldsymbol{Q}_{\perp}+\boldsymbol{V}_{\parallel}\boldsymbol{Q}_{\parallel}.\label{eq:VparNonInvariance}
\end{equation}
Applying $\boldsymbol{V}_{\perp}^{T}$ to (\ref{eq:VparNonInvariance}),
we find that $\boldsymbol{Q}_{\perp}=\boldsymbol{V}_{\perp}^{T}\boldsymbol{A}\boldsymbol{V}_{\parallel}$.
If $\boldsymbol{V}_{\perp}$ is invariant under $\boldsymbol{A}^{T}$,
we have $\boldsymbol{Q}_{\perp}=\boldsymbol{0}$, which implies that
$E_{\parallel}$ coincides with $E$.

We now assume an undamped mechanical system with an equilibrium at
the origin, such that the Jacobian of the vector field $\dot{\boldsymbol{x}}=\boldsymbol{G}\left(\boldsymbol{x}\right)$
at the origin has the form
\begin{equation}
\boldsymbol{B}=\begin{pmatrix}\boldsymbol{0} & \boldsymbol{I}\\
-\boldsymbol{K} & \boldsymbol{0}
\end{pmatrix}=D\boldsymbol{G}\left(\boldsymbol{0}\right),\label{eq:UndampedJacobian}
\end{equation}
where the stiffness matrix $\boldsymbol{K}$ is symmetric and positive
definite. If $\left(\boldsymbol{v},\lambda\boldsymbol{v}\right)^{T}$
is a right eigenvector of $\boldsymbol{B}$, then $\left(\lambda\boldsymbol{v}^{T},\boldsymbol{v}^{T}\right)$
is a left eigenvector both corresponding to the same eigenvalue $\lambda$,
where $\boldsymbol{v}$ is a real valued vector. Therefore if the
eigenvector $\left(\boldsymbol{v},\lambda\boldsymbol{v}\right)^{T}$
being in the range of $\boldsymbol{V}_{\perp}$ implies that the eigenvector
$\left(\boldsymbol{v},\overline{\lambda}\boldsymbol{v}\right)^{T}$
is also in the range of $\boldsymbol{V}_{\perp}$, then $\boldsymbol{V}_{\perp}$
is invariant under $\boldsymbol{B}^{T}$. This is because the pair
of vectors $\left(\boldsymbol{v},\lambda\boldsymbol{v}\right)$, $\left(\boldsymbol{v},\overline{\lambda}\boldsymbol{v}\right)$
and $\left(\lambda\boldsymbol{v},\boldsymbol{v}\right)$, $\left(\overline{\lambda}\boldsymbol{v},\boldsymbol{v}\right)$
span the same linear subspace. In other words, if $\lambda_{j}^{2}-\lambda_{k}^{2}\neq0$
holds for $k=1,\cdots,\nu$ and $j=\nu+1,\cdots n$, then pairs of
complex conjugate eigenvectors are part of the range of $\boldsymbol{V}_{\perp}$,
which makes $\boldsymbol{V}_{\perp}$ invariant under $\boldsymbol{B}^{T}$
and further implies that $E_{\parallel}$ coincides with $E$. Using
the relation that $\boldsymbol{A}=D\boldsymbol{F}\left(\boldsymbol{0}\right)=\exp\boldsymbol{B}\tau$,
where $\tau$ is the sampling period, we find that if $\mu_{j}/\mu_{k}\neq1$
for $k=1,\cdots,\nu$ and $j=\nu+1,\cdots n$, then $E_{\parallel}=E$.
If light damping is introduced, into the mechanical system, such that
\[
\boldsymbol{B}=\begin{pmatrix}\boldsymbol{0} & \boldsymbol{I}\\
-\boldsymbol{K} & -\boldsymbol{C}
\end{pmatrix},
\]
where $\left\Vert C\right\Vert $ is small, $E_{\parallel}$ remains
close to $E$ due to the continuity of eigenvectors with respect to
the underlying matrix.
\end{remark}

\subsection{\label{subsec:BackboneCurves}The backbone and damping curves of
an ISF}

We can accurately identify the dynamics on an ISF and determine its
instantaneous damping ratio (\ref{eq:FITdamping}) and angular frequency
(\ref{eq:FITfrequency}). It is however not possible to attach a unique
amplitude to a leaf within a foliation. In this section we go around
this restriction and define a surrogate for the amplitude, which measures
the distance of a leaf from the equilibrium. This is extracted purely
from the submersion $\boldsymbol{U}$, therefore it will not measure
the amplitude, but some approximation of it as explained in remark
\ref{rem:LightlyDampedBackbone}.

In section \ref{subsec:ReducedDynamics} we have parametrised the
ISF in polar coordinates as $\boldsymbol{z}=\left(r\cos\theta,r\sin\theta\right)$.
Then in section \ref{subsec:LeavesCalc} we described the leaves of
an ISF as an immersion. Picking a point on the leaf $\mathcal{L}_{\boldsymbol{z}}$
and taking its norm can act as an instantaneous amplitude. The simplest
option is to pick the intersection point $\mathcal{L}_{\boldsymbol{z}}\cap E_{\parallel}$,
which is $\boldsymbol{W}_{\boldsymbol{z}}\left(\boldsymbol{0}\right)$
as per definition (\ref{eq:LeafImmersionDef}) and illustrated in
figure \ref{fig:LeafImmersion}. Using the same polar parametrisation
that describes the instantaneous natural frequency and damping, we
define our surrogate for the amplitude as
\begin{equation}
\Delta_{E^{\star}}\left(r\right)=\sup_{\theta\in[0,2\pi)}\left|\boldsymbol{W}_{\left(r\cos\theta,r\sin\theta\right)}\left(\boldsymbol{0}\right)\right|.\label{eq:SurrogateAmplitude}
\end{equation}

\begin{definition}
We call the parametrised curve 
\begin{equation}
\mathscr{B}_{E^{\star}}=\left\{ \omega_{E^{\star}}\left(r\right),\Delta_{E^{\star}}\left(r\right):0\le r<r_{\mathrm{max}}\right\} \label{eq:ISFbackbone}
\end{equation}
the \emph{ISF backbone curve} of the dynamics associated with the
codimension-two ISF corresponding to the linear subspace $E^{\star}$.
\end{definition}
We can similarly construct a curve that describes instantaneous damping.
\begin{definition}
We call the parametrised curve 
\begin{equation}
\mathscr{D}_{E^{\star}}=\left\{ \zeta_{E^{\star}}\left(r\right),\Delta_{E^{\star}}\left(r\right):0\le r<r_{\mathrm{max}}\right\} \label{eq:ISFdamping}
\end{equation}
the \emph{ISF damping curve} of the dynamics associated with the codimension-two
ISF corresponding to the linear subspace $E^{\star}$.
\end{definition}
If a full set of ISFs are calculated that satisfy the conditions (\ref{eq:IntersectCond}),
and one is willing to solve equation (\ref{eq:RestoreDef}) for the
function $\boldsymbol{h}$ or its values for a set of arguments, then
the SSM backbone and damping curves can also be calculated. The amplitude
of a vibration represented by the conjugate dynamics $\boldsymbol{S}^{j}$
on the corresponding SSM is given by 
\[
\Delta_{E_{j}}\left(r\right)=\sup_{\theta\in[0,2\pi)}\left|\boldsymbol{W}^{j}\left(r\cos\theta,r\sin\theta\right)\right|,
\]
where $\boldsymbol{W}^{j}$ is defined by (\ref{eq:SSMimmersion}).
This allows us to make the following definitions.
\begin{definition}
We call the parametrised curve 
\begin{equation}
\mathscr{B}_{E_{j}}=\left\{ \omega_{E_{j}}\left(r\right),\Delta_{E_{j}}\left(r\right):0\le r<r_{\mathrm{max}}\right\} \label{eq:SSMbackbone}
\end{equation}
the \emph{SSM backbone curve} of the dynamics associated with the
two-dimensional SSM corresponding to the linear subspace $E$.
\end{definition}
We can similarly construct a curve that describes instantaneous damping.
\begin{definition}
We call the parametrised curve 
\begin{equation}
\mathscr{D}_{E_{j}}=\left\{ \zeta_{E_{j}}\left(r\right),\Delta_{E_{j}}\left(r\right):0\le r<r_{\mathrm{max}}\right\} \label{eq:SSMdamping}
\end{equation}
the \emph{SSM damping curve} of the dynamics associated with the two-dimensional
SSM corresponding to the linear subspace $E$.
\end{definition}
\begin{remark}
The backbone and damping curves are not unique, they depend on the
choice of parametrisation of the ISF or SSM. This is illustrated by
the fact that $\boldsymbol{S}$ can be chosen linear if there are
no internal resonances in the strict sense of (\ref{eq:MapInternalResonance}),
which is the case of most damped systems. For linear $\boldsymbol{S}$
the damping and backbone curves are straight lines, which is not the
expected result for a nonlinear system. In \cite{Szalai20160759}
a special parametrisation was chosen, such that all near resonances
are fully represented in the conjugate dynamics on the SSM, which
made the backbone curves unique. We use an equivalent normalisation
in the optimisation problem (\ref{eq:SigmaLeastSquares}), which results
in unique backbone and damping curves. However the alternative normalising
loss function (\ref{eq:FITDUnormLoss}) can leave near internally
resonant terms in the submersion $\boldsymbol{U}$, which leads to
non-unique representations of the unique ISF. The amount of variation
in the submersion $\boldsymbol{U}$ and map $\boldsymbol{S}$ can
be reduced if during optimisation various terms of the submersion
$\boldsymbol{U}$ assume similar magnitudes as the nonlinear terms
of $\boldsymbol{S}$. This strategy leads to smaller variations as
the linear damping vanishes and the near internal resonances are getting
closer to strict internal resonances. Therefore the uncertainty in
the location of the backbone curve will also vanish as damping vanishes,
making the backbone curve unique in the limit, if the limit exists.
We must stress that this argument only mentions the linear damping,
that is, only $\zeta_{E^{\star}}\left(0\right)\to0$ is assumed, therefore
the damping curve need not vanish.
\end{remark}
\begin{remark}
\label{rem:LightlyDampedBackbone}In general, there is no connection
between the ISF and SSM backbone curves, except for lightly damped
mechanical systems. According to remark \ref{rem:EparNearInvariance}
for undamped mechanical systems $E$ and $E_{\parallel}$ coincide,
hence due to the construction of $\boldsymbol{W}_{\boldsymbol{z}}$,
the surrogate amplitude $\Delta_{E^{\star}}$ is linearly asymptotic
to the SSM amplitude $\Delta_{E_{j}}$ at the equilibrium. If small
damping is introduced, $E$ and $E_{\parallel}$ remain close to each
other. This implies that $\Delta_{E^{\star}}$ remains nearly linearly
asymptotic to the SSM amplitude $\Delta_{E_{j}}$, and the ISF and
SSM backbone curves stay close to each other near the equilibrium.
\end{remark}

\section{Examples}

We illustrate the application of the theory on two examples, one based
on a mathematical model, the other is purely data driven.

\subsection{\label{subsec:ShawPierre}Shaw-Pierre example}

We use a modified two-degree-of-freedom oscillator studied by Shaw
and Pierre \cite{ShawPierre}, which has appeared in \cite{Szalai20160759}.
The modification makes the damping matrix proportional to the stiffness
matrix in the linearised problem. The first-order equations of motion
are 
\begin{equation}
\left.\begin{array}{rl}
\dot{x}_{1} & =v_{1},\\
\dot{x}_{2} & =v_{2},\\
\dot{v}_{1} & =-cv_{1}-k_{0}x_{1}-\kappa x_{1}^{3}-k_{0}(x_{1}-x_{2})-c(v_{1}-v_{2}),\\
\dot{v}_{2} & =-cv_{2}-k_{0}x_{2}-k_{0}(x_{2}-x_{1})-c(v_{2}-v_{1}).
\end{array}\right\} \label{eq:ShawPierreModel}
\end{equation}
where the parameters are $c=0.003$, $k_{0}=1$, and $\kappa=0.5$.
The natural frequencies and damping ratios are 
\[
\omega_{1}=\sqrt{k_{0}},\qquad\omega_{2}=\sqrt{3k_{0}},\qquad\zeta_{1}=\frac{c}{2\sqrt{k_{0}}},\qquad\zeta_{2}=\frac{\sqrt{3}c}{2\sqrt{k_{0}}},
\]
yielding the complex eigenvalues 
\[
\lambda_{1,2}=-\frac{c}{2}\pm i\sqrt{k_{0}\left(1-\frac{c^{2}}{4k_{0}}\right)},\quad\lambda_{3,4}=-\frac{3c}{2}\pm i\sqrt{3k_{0}\left(1-\frac{3c^{2}}{4k_{0}}\right)},
\]
where we have assumed that both modes are underdamped, i.e., $c<2\sqrt{k_{0}/3}.$
The spectral quotients corresponding to these natural frequencies
are
\begin{align*}
\beth_{E_{1}^{\star}} & =1, & \beth_{E_{2}^{\star}}=3.
\end{align*}

The data for this problem was generated from 100 trajectories of 16
points each with time step $T=0.8$. The initial conditions for each
trajectory was uniformly drawn from a cube of width $0.4$ about the
origin and scaled, such that $\boldsymbol{x}_{0}\mapsto\boldsymbol{x}_{0}/\left|\boldsymbol{x}_{0}\right|^{2}$.
This ensures a higher density of data about the origin and that $\max\left|\boldsymbol{x}_{k}\right|\le0.2$.
Testing data was also created by the same procedure in order to check
whether we overfit the data. The fitting procedure used $\sigma=2$
and $\sigma=3$ values with order 3, 5 and 7 polynomials representing
the submersion $\boldsymbol{U}$ and dynamics $\hat{\boldsymbol{S}}$
in equation (\ref{eq:PolyObjFunction}). The optimisation was carried
out using the first-order BFGS method. The parameters for the penalty
term (\ref{eq:FITnormaliseLoss-1}) were $N_{r}=10$, $N_{\theta}=24$
and $r_{\mathrm{max}}=0.2$. The accuracy of fitting can be seen in
table \ref{fig:SPresidual}, which also shows that as the order of
polynomials grows, the ratio between of testing and training residual
slightly increases.
\begin{table}
\begin{centering}
\begin{tabular}{l|l|l|l|l}
 & training $E_{1}$ & training $E_{2}$ & testing $E_{1}$ & testing $E_{2}$\tabularnewline
\hline 
DATA O(3) $\sigma=2$ & $1.1800\times10^{-5}$ & $3.6622\times10^{-5}$ & $1.5712\times10^{-5}$ & $4.5403\times10^{-5}$\tabularnewline
DATA O(3) $\sigma=3$ & $1.2877\times10^{-5}$ & $3.7610\times10^{-5}$ & $1.7158\times10^{-5}$ & $4.9812\times10^{-5}$\tabularnewline
DATA O(5) $\sigma=2$ & $3.7609\times10^{-6}$ & $9.3560\times10^{-6}$ & $6.3557\times10^{-6}$ & $1.4281\times10^{-5}$\tabularnewline
DATA O(5) $\sigma=3$ & $4.2710\times10^{-7}$ & $4.1703\times10^{-6}$ & $1.1541\times10^{-6}$ & $1.0405\times10^{-5}$\tabularnewline
DATA O(7) $\sigma=2$ & $4.0612\times10^{-6}$ & $9.7153\times10^{-6}$ & $6.7263\times10^{-6}$ & $1.5472\times10^{-5}$\tabularnewline
DATA O(7) $\sigma=3$ & $8.3854\times10^{-8}$ & $6.4913\times10^{-7}$ & $5.1314\times10^{-7}$ & $3.2731\times10^{-6}$\tabularnewline
\end{tabular}
\par\end{centering}
\caption{\label{fig:SPresidual}The residual of the fitting procedure is calculated
as $\mathit{res}=\frac{1}{N}\sum_{k=1}^{N}\left|\boldsymbol{x}_{k}\right|^{-1}\left|\boldsymbol{U}\left(\boldsymbol{y}_{k}\right)-\boldsymbol{S}\left(\boldsymbol{U}\left(\boldsymbol{x}_{k}\right)\right)\right|$,
which are compared for the training and testing data. DATA O($n$)
means that order-$n$ polynomial was fitted to the generated data.}

\end{table}

In figure \ref{fig:ShawPierreBackbone} various ISF backbone and damping
curves are compared to each other and to the SSM backbone and damping
curves. We treat the order-7 SSM calculation as a reference. It can
be seen that the ISF backbone curves are very close to the SSM backbone
curve. The ISF damping curves seemingly display a larger variation,
however that is due to the scale of the horizontal axis, the relative
error is small.

\begin{figure}
\begin{centering}
\includegraphics[width=0.45\linewidth]{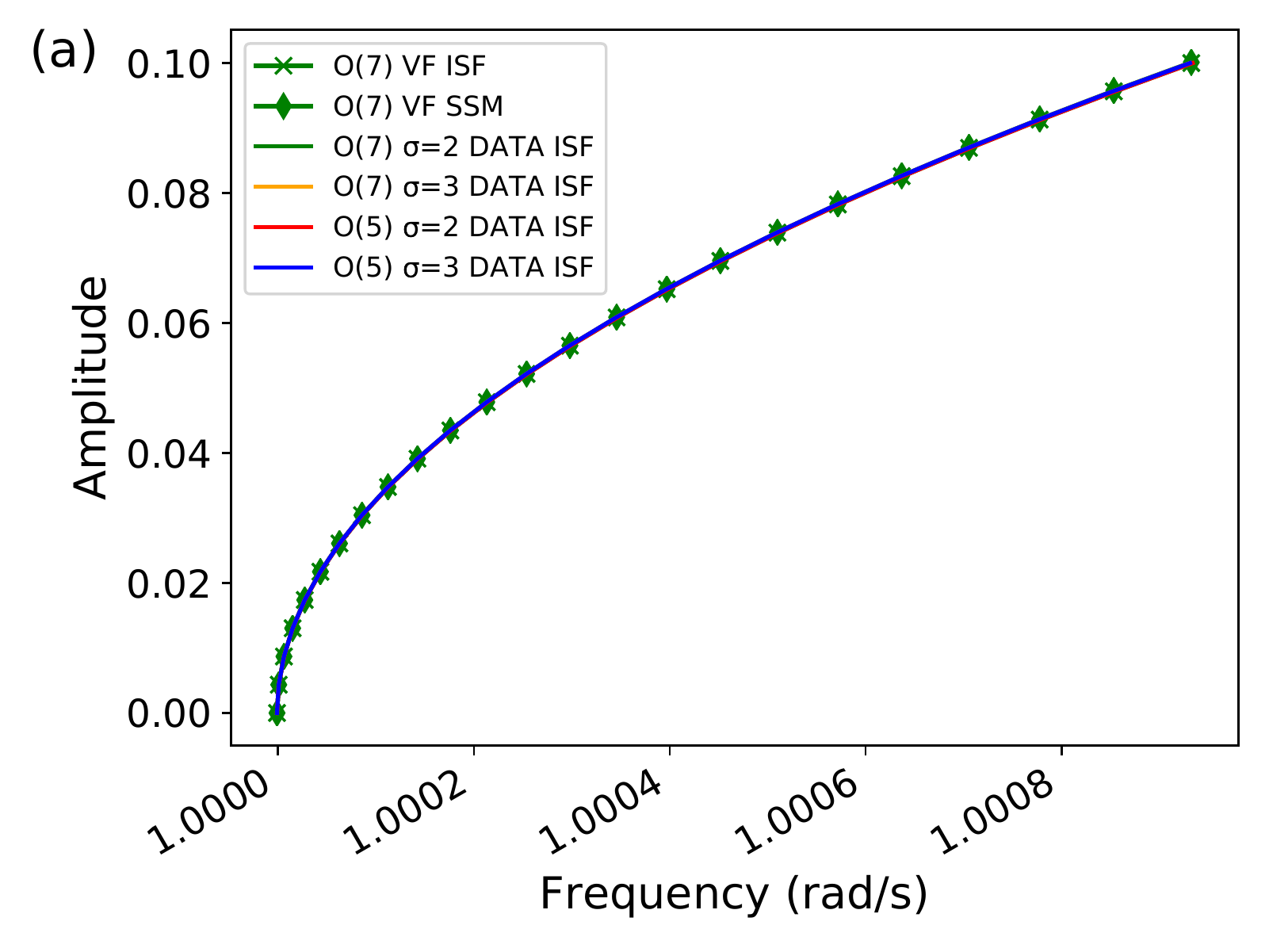}\includegraphics[width=0.45\linewidth]{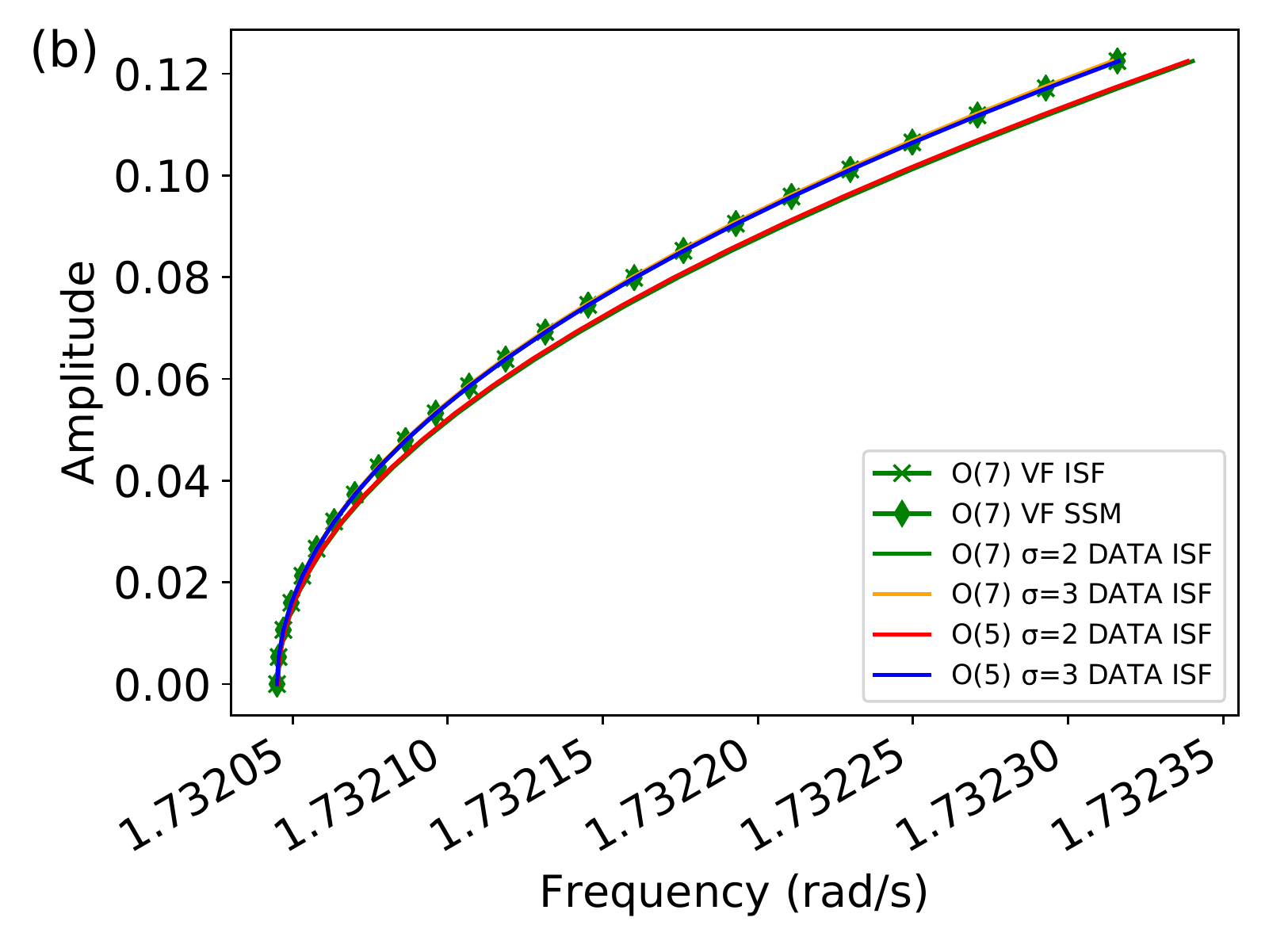}\\
\includegraphics[width=0.45\linewidth]{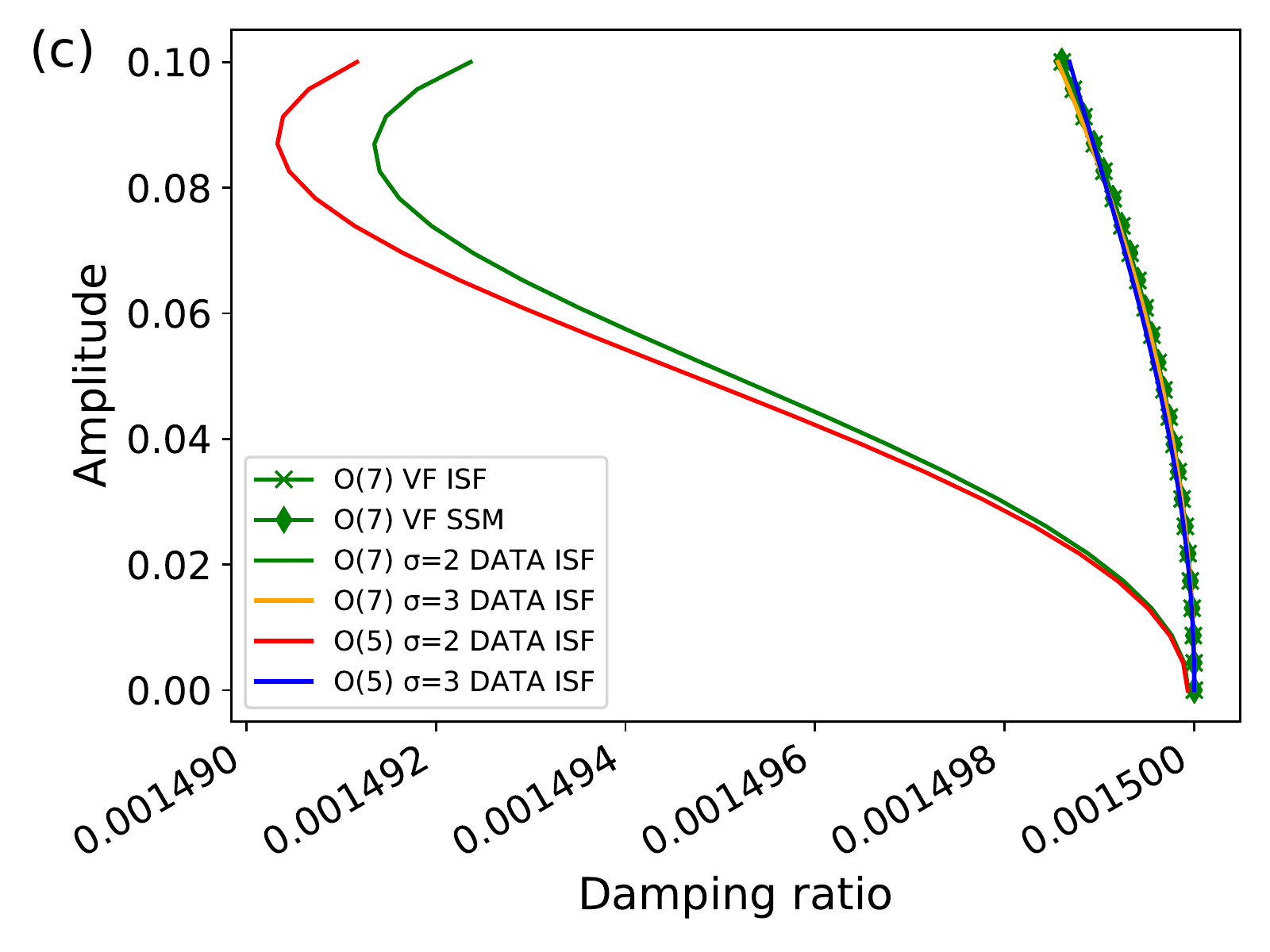}\includegraphics[width=0.45\linewidth]{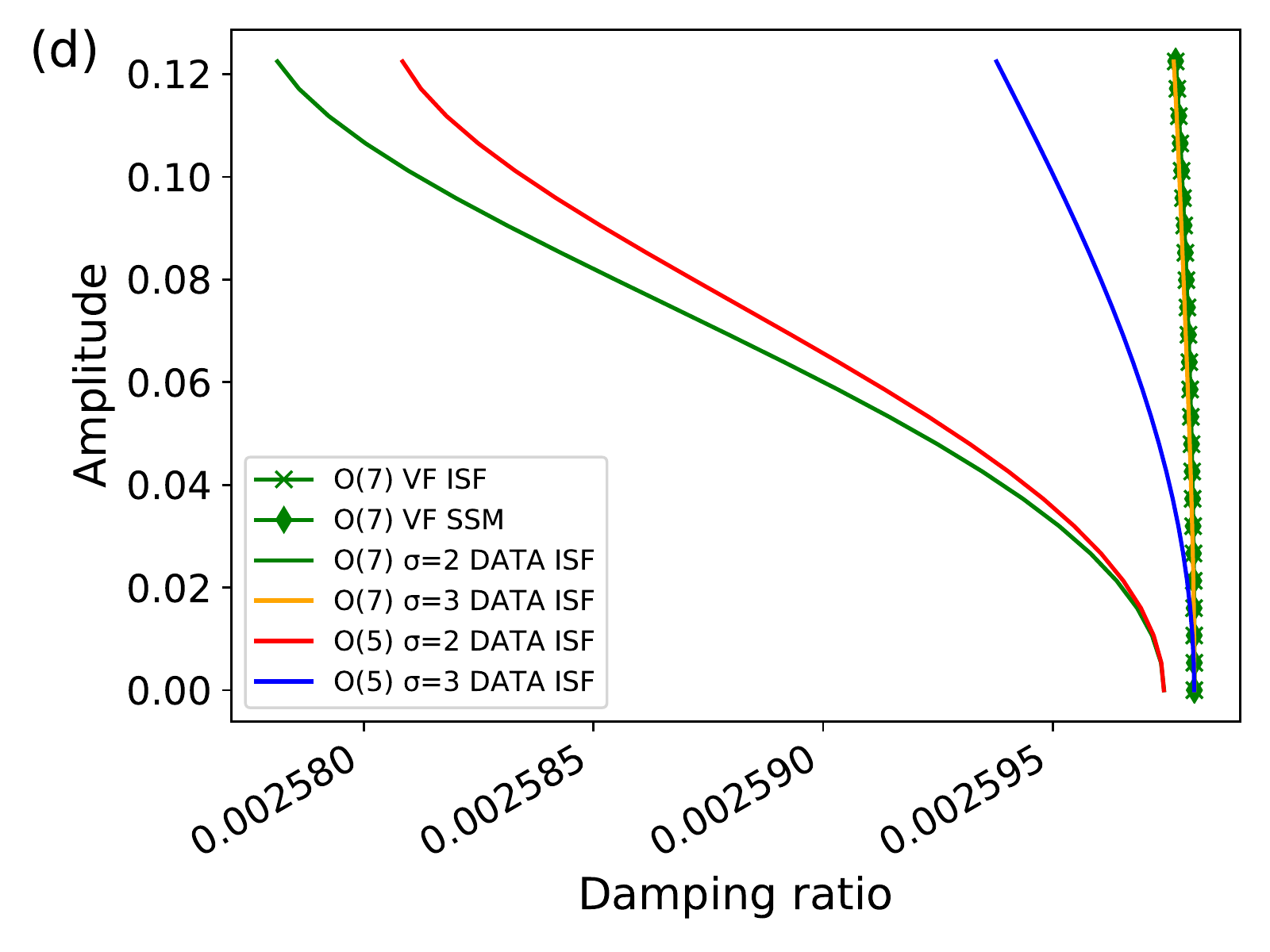}
\par\end{centering}
\caption{\label{fig:ShawPierreBackbone}Backbone and damping curves of equation
(\ref{eq:ShawPierreModel}). The curves were identified using SSMs,
series expanded ISFs calculated from the vector field and identified
from data. The relative error of the backbone and damping curves are
roughly the same, but due to the scaling of the figure the damping
curves appear less accurate. VF means that the vector field was used
to calculate the result, DATA means that the ISF was directly fitted
to the data, $O(\alpha)$ indicates that order $\alpha$ polynomials
were used.}
\end{figure}

The calculated ISFs can be used to reconstruct the full dynamics.
The accuracy of this reconstruction is illustrated in figure \ref{fig:ShawPierreAccuracy}
for a single trajectory. We compare the sampled trajectory $\boldsymbol{x}_{k}=\boldsymbol{x}\left(kT\right)$,
which is the solution of the differential equation (\ref{eq:ShawPierreModel})
to the reconstructed dynamics using the map $\boldsymbol{h}$ as defined
by (\ref{eq:RestoreDef}). The initial conditions for the reduced
order models are set by $\boldsymbol{z}_{j,0}=\boldsymbol{U}^{j}\left(\boldsymbol{x}_{0}\right)$
and then iterated under the reduced models, such that $\boldsymbol{z}_{j,k+1}=\boldsymbol{S}^{j}\left(\boldsymbol{z}_{j,k}\right)$.
First, we evaluate the inaccuracies of the fitting of the invariance
equation by
\begin{equation}
\mathrm{err}_{k}^{\mathit{fw}}=\left|\boldsymbol{x}_{k}\right|^{-1}\left|\left(\boldsymbol{z}_{1,k}-\boldsymbol{U}^{1}\left(\boldsymbol{x}_{k}\right),\boldsymbol{z}_{2,k}-\boldsymbol{U}^{2}\left(\boldsymbol{x}_{k}\right)\right)\right|,\label{eq:FWerror-1}
\end{equation}
where the subscript $\mathit{fw}$ refers to forward prediction. The
result of this can be seen in figure \ref{fig:ShawPierreAccuracy}(a).
Second, we use equation (\ref{eq:RedToFull}) to reconstruct the dynamics
from the two ISFs and compare the reconstructed trajectories to the
solution of the differential equation (\ref{eq:ShawPierreModel}).
The relative reconstruction error is calculated as
\begin{equation}
\mathrm{err}_{k}^{\mathit{bw}}=\left|\boldsymbol{x}_{k}\right|^{-1}\left|\boldsymbol{x}_{k}-\boldsymbol{h}\left(\boldsymbol{z}_{1,k},\boldsymbol{z}_{2,k}\right)\right|\label{eq:BWerror-1}
\end{equation}
and illustrated in figure \ref{fig:ShawPierreAccuracy}(b). When comparing
figures \ref{fig:ShawPierreAccuracy}(a) and \ref{fig:ShawPierreAccuracy}(b),
one can see that the accuracy of satisfying the invariance equation
is better than the accuracy of the reconstruction, which is due to
the added inaccuracy of the post-processing step that produces the
map $\boldsymbol{h}$. It is also clear that the error in the invariance
equation increases about one order of magnitude over the 32 steps
of the comparison, while the reconstruction error remains roughly
constant at least for order-3 and 5 polynomials. We note that the
described behaviour is consistent with other trajectories, however
the absolute magnitude of the errors will increase as $\left|\boldsymbol{x}_{0}\right|$
increases. The dependence of the errors on $\left|\boldsymbol{x}_{0}\right|$
can be controlled by the value of $\sigma$. However, the error also
depends on the distribution of the data within the state space, which
we may not have control over. We note that the errors in figure \ref{fig:ShawPierreAccuracy}(b)
can be reduced to the errors displayed in figure \ref{fig:ShawPierreAccuracy}(a)
if equation (\ref{eq:RestoreDef}) is solved using a Newton's method
instead of the iteration (\ref{eq:InverseSubmersion}) (data not shown
as it is indistinguishable from figure \ref{fig:ShawPierreAccuracy}(a)).
\begin{figure}
\begin{centering}
\includegraphics[width=0.45\linewidth]{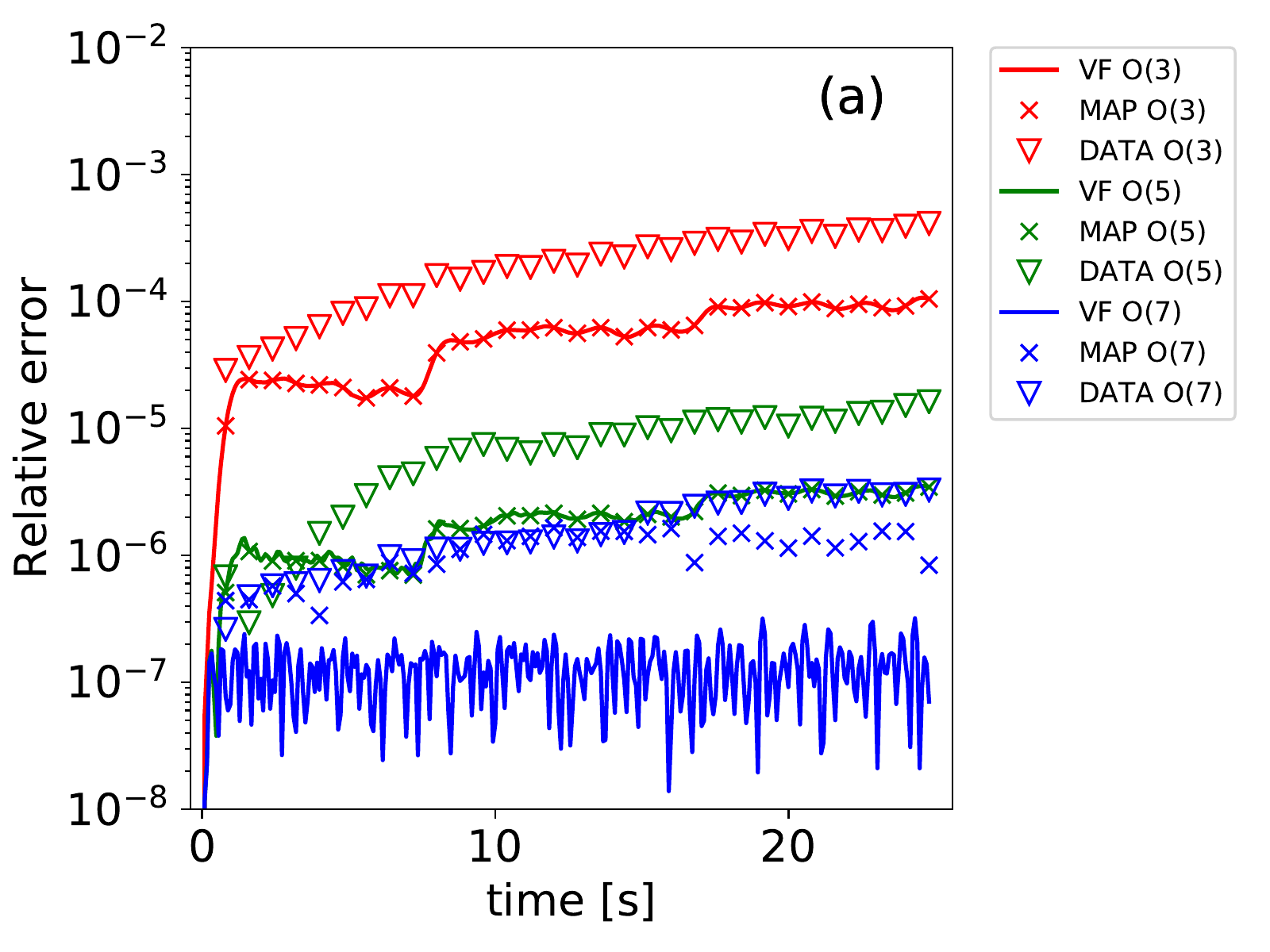}\includegraphics[width=0.45\linewidth]{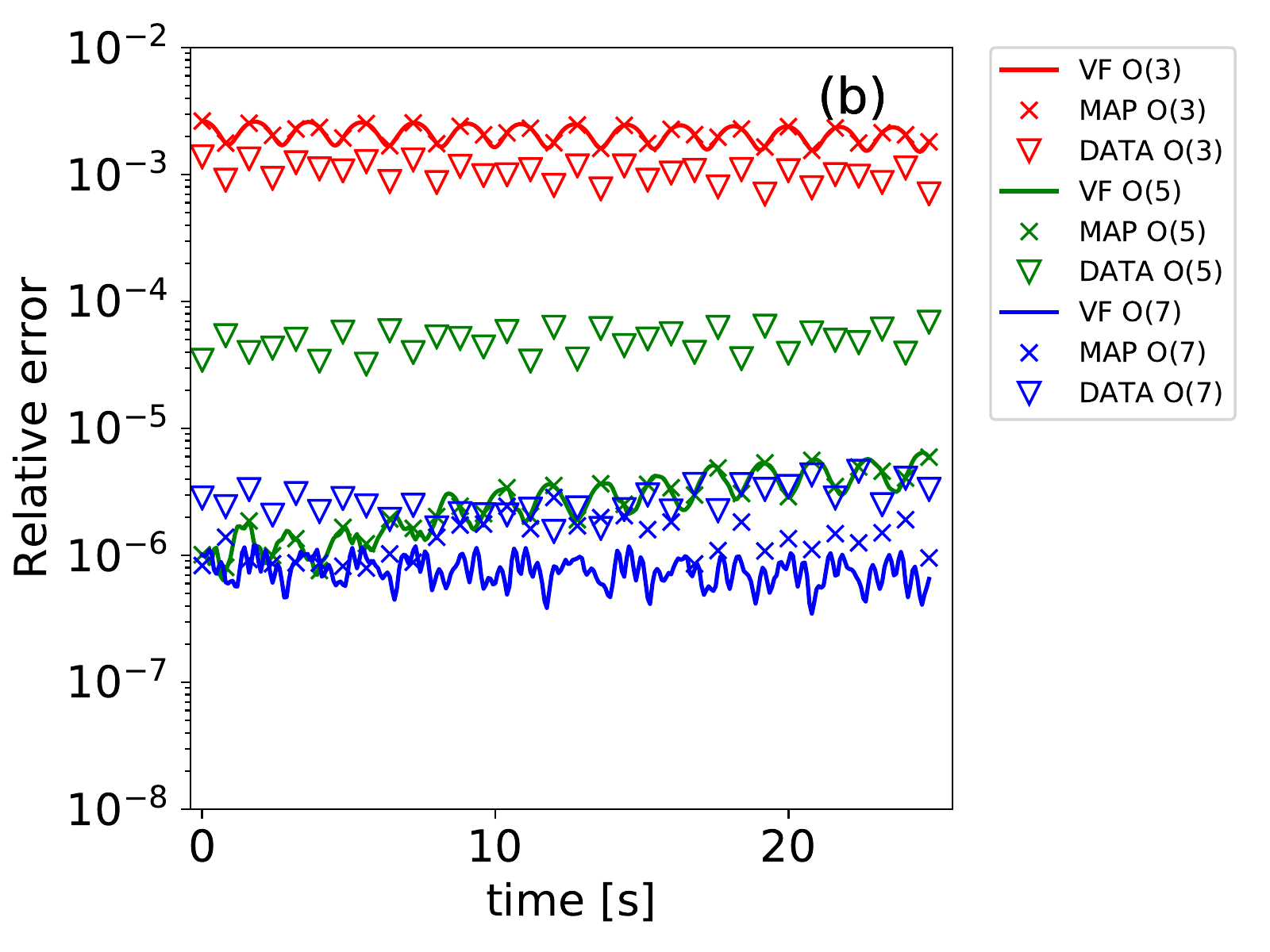}
\par\end{centering}
\caption{\label{fig:ShawPierreAccuracy}Reconstruction error as a function
of time. Solid lines correspond to the ISF obtained directly from
the vector field (\ref{eq:ShawPierreModel}). The $\times$ markers
denote the error from the ISFs of the identified map and the $\triangledown$
corresponds to the directly identified ISFs. The same comparison is
carried out for orders $\alpha=3$ (black) $\alpha=5$ (green) and
$\alpha=7$ (red) polynomial expansions. The scaling order parameter
is $\sigma=3$ and the initial condition is\textbf{ $\boldsymbol{x}=\left(1.1088\times10^{-4},1.9023\times10^{-5},-0.0739,-0.0126\right)$}.
(a) errors of reconstruction using equation (\ref{eq:FWerror-1})
and (b) using (\ref{eq:BWerror-1}).}
\end{figure}

\subsection{Clamped-clamped beam}

Here we analyse the free-decay vibration of a clamped-clamped beam.
The data was collected by Ehrhardt and Allen \cite{EHRHARDT2016612}
using the device depicted in figure \ref{fig:CCBeam}. The data contains
three tracks of velocity information, measured at the midpoint of
the beam, which correspond to the first three vibration modes of the
structure. The initial conditions were set by applying a carefully
tuned forcing that compensates for the damping within the structure,
and intends to recover the sustained vibration that would have occurred
if the structure did not have damping. Such vibration is thought to
be near an SSM \cite{Szalai20160759}, which makes it unusual as impact
hammer tests would not single out specific modes of vibration. The
data was re-sampled with time period $T=0.97656$ ms. We use the same
phase-space reconstruction through delay-embedding of velocity data
as in \cite{Szalai20160759}, where full justification is given for
the choice of phase space dimensionality.

We have fitted ISFs to all three modes of vibration captured by the
data, however we only show the first and third backbone curves, which
can be compared to the analysis in \cite{EHRHARDT2016612}. We have
used order 3 polynomials for the submersion $\boldsymbol{U}$, order
3, 5 and 7 polynomials for the conjugate map $\boldsymbol{S}$ and
set $\sigma=1$ throughout the calculation. Setting a higher value
of $\sigma$ would over-emphasise the importance of the data near
the equilibrium and therefore the backbone curves would follow less
accurately the actual frequency variations at higher amplitudes. We
have found that using higher order polynomials for the submersion
$\boldsymbol{U}$ makes the composite map $\widehat{\boldsymbol{U}}$
of equation (\ref{eq:RestoreDef}) non-invertible close to the equilibrium,
which is a likely symptom of over fitting. The parameters for the
penalty term (\ref{eq:FITnormaliseLoss-1}) were $N_{r}=12$, $N_{\theta}=24$
and $r_{\max}=0.7$. The residuals of the fitting process are gathered
in table \ref{tab:CCbeamResiduals}. When a polynomial model is first
fitted to the data, just as in \cite{Szalai20160759}, and the ISF
is directly calculated from the model, the residuals are high, because
the ISF only asymptomatically satisfies the invariance equation \ref{eq:adjInvariance}
about the equilibrium with respect to the fitted model. When the ISF
is directly fitted to the data, the loss function (\ref{eq:PolyObjFunction})
is minimised, which is closely related to the residual.

The fitting procedure also recovers the natural frequencies and the
damping ratios of the linear modes of the structure. The identified
natural frequencies can be seen in table \ref{tab:CCbeamFreqs}, which
show very little variation from the linearly identified values in
\cite{EHRHARDT2016612}. The damping ratios in table \ref{tab:CCbeamDamp}
show a wider variation, and there seems to be a systematic error of
a factor $2\ldots3$. The ISF spectral quotients are also shown in
table \ref{tab:CCbeamDamp}, which indicate that all ISFs are unique
if they are twice differentiable, when considering the results of
the order 5 and 7 fittings.
\begin{figure}[th]
\begin{centering}
\includegraphics[width=0.8\linewidth]{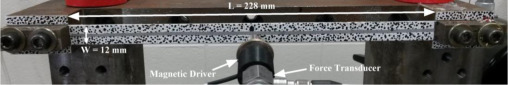}
\par\end{centering}
\caption{The clamped-clamped beam, whose free-vibration was measured, which
in turn was used to carry out our analysis. Reproduced from \cite{EHRHARDT2016612}.\label{fig:CCBeam}}
\end{figure}

\begin{table}
\begin{centering}
\begin{tabular}{c|c|c|c|}
 & mode 1 & mode 2 & mode 3\tabularnewline
\hline 
MAP O(3) & $7.3574\times10^{4}$  & $8.3796\times10^{2}$  & $2.2908\times10^{2}$ \tabularnewline
DATA O(3) & $4.0804\times10^{-2}$  & $1.6425\times10^{-2}$  & $6.6652\times10^{-3}$ \tabularnewline
DATA O(5) & $2.4294\times10^{-2}$  & $1.0230\times10^{-2}$  & $3.6994\times10^{-3}$ \tabularnewline
DATA O(7) & $1.9384\times10^{-2}$  & $8.7304\times10^{-3}$  & $2.8668\times10^{-3}$ \tabularnewline
\end{tabular}
\par\end{centering}
\caption{\label{tab:CCbeamResiduals}Residuals of the fitting process, calculated
as $\mathit{res}=\frac{1}{N}\sum_{k=1}^{N}\left|\boldsymbol{x}_{k}\right|^{-1}\left|\boldsymbol{U}\left(\boldsymbol{y}_{k}\right)-\boldsymbol{S}\left(\boldsymbol{U}\left(\boldsymbol{x}_{k}\right)\right)\right|$.
O($\alpha$) means that the conjugate map $\boldsymbol{S}$ is an
order-$\alpha$ polynomial, while the submersion $\boldsymbol{U}$
is always an order-3 polynomial.}
\end{table}
\begin{table}

\begin{centering}
\begin{tabular}{c|c|c|c|}
 & $\omega_{1}$ & $\omega_{2}$ & $\omega_{3}$\tabularnewline
\hline 
MAP O(3) & $2.8644\times10^{2}$  & $1.0466\times10^{3}$  & $2.3124\times10^{3}$ \tabularnewline
DATA O(3) & $2.9854\times10^{2}$  & $1.0423\times10^{3}$  & $2.3148\times10^{3}$ \tabularnewline
DATA O(5) & $2.8714\times10^{2}$  & $1.0431\times10^{3}$  & $2.3121\times10^{3}$ \tabularnewline
DATA O(7) & $2.8561\times10^{2}$  & $1.0433\times10^{3}$  & $2.3115\times10^{3}$ \tabularnewline
Ref \cite{EHRHARDT2016612} & $2.8777\times10^{2}$ & $1.0782\times10^{3}$ & $2.3354\times10^{3}$\tabularnewline
\end{tabular}
\par\end{centering}
\caption{\label{tab:CCbeamFreqs}Natural frequencies of the three ISFs are
compared to the estimates in \cite{EHRHARDT2016612}. MAP means a
polynomial model fit was carried out first, DATA means that the ISF
was directly fitted to the data, $O(\alpha)$ indicates that order-$\alpha$
polynomials were used for the map $\boldsymbol{S}$.}

\end{table}
\begin{table}
\begin{centering}
\begin{tabular}{c|c|c|c|c|c|c|}
 & $\zeta_{1}$ & $\zeta_{2}$ & $\zeta_{3}$ & $\beth_{1}$ & $\beth_{2}$ & $\beth_{3}$\tabularnewline
\hline 
MAP O(3) & $4.0957\times10^{-2}$  & $1.0379\times10^{-2}$  & $1.6871\times10^{-3}$  & $3.007$  & $2.784$  & $1.000$\tabularnewline
DATA O(3) & $2.9466\times10^{-4}$  & $2.6684\times10^{-3}$  & $1.9934\times10^{-3}$  & $1.000$  & $31.62$  & $52.45$\tabularnewline
DATA O(5) & $1.3519\times10^{-2}$  & $3.4533\times10^{-3}$  & $1.8234\times10^{-3}$  & $1.078$  & $1.000$  & $1.170$\tabularnewline
DATA O(7) & $1.3930\times10^{-2}$  & $3.3550\times10^{-3}$  & $1.7797\times10^{-3}$  & $1.137$  & $1.000$  & $1.175$\tabularnewline
Ref \cite{EHRHARDT2016612} & $3.8\times10^{-3}$ & $1.2\times10^{-3}$ & $9.0\times10^{-4}$ & $1.127$ & $1.333$ & $1.0$\tabularnewline
\end{tabular}
\par\end{centering}
\caption{\label{tab:CCbeamDamp}Damping ratios and ISF spectral quotients estimated
by polynomial fitting are compared to \cite{EHRHARDT2016612}. MAP
means a polynomial model fit was carried out first, DATA means that
the ISF was directly fitted to the data, $O(\alpha)$ indicates that
order $\alpha$ polynomials were used for the map $\boldsymbol{S}$.}
\end{table}

Using the fitted ISFs, we have calculated the backbone curves corresponding
to the first and third vibration modes. The ISF calculations are compared
to the force appropriation results and free decay analysis of \cite{EHRHARDT2016612},
denoted by 'Forcing' and 'Decay' in figure \ref{fig:CCbeamBackbone},
respectively. We have also calculated the SSMs and ISFs indirectly,
from a third order polynomial model that is fitted to the data, as
in \cite{Szalai20160759}, which is denoted by 'MAP' in figure \ref{fig:CCbeamBackbone}.
To obtain the backbone curves, we have applied the post-processing
steps in sections \ref{subsec:LeavesCalc} and \ref{subsec:BackboneCurves}.
In figures \ref{fig:CCbeamBackbone}(a,b) the leaves of the foliation
were recovered as polynomials as described in section \ref{subsec:LeavesCalc},
however in figures \ref{fig:CCbeamBackbone}(c,d), equation (\ref{eq:LeafGraphPreIt})
was solved for $\boldsymbol{g}$ in a pointwise manner with fixed
$\boldsymbol{y},\boldsymbol{z}$ values using Newton's method, which
gives accurate results. In figures \ref{fig:CCbeamBackbone}(e,f),
we have used Newton's method to solve the even more accurate equation
(\ref{eq:RestoreDef}), and calculated the SSM backbone curves from
the collection of three ISFs. It can be seen in figures \ref{fig:CCbeamBackbone}(c,e),
that Newton's method is unable to find a solution for higher vibration
amplitudes in the vicinity of previous iterations. This indicates
for figure \ref{fig:CCbeamBackbone}(c) that some leaves of the foliation
become tangential to $E_{\parallel}$ (defined by (\ref{eq:Eparallel}))
or the leaves of the three ISFs do not always intersect, in case of
figure \ref{fig:CCbeamBackbone}(e). We believe that the latter problem
can be partly blamed on the lack of data outside the neighbourhoods
of the three SSMs. The fitting method is arbitrarily picking the submersions
in these regions of the phase space, which can be highly distorted.
This problem was not encountered in section \ref{subsec:ShawPierre},
where the data was better distributed in the phase space.

\begin{figure}
\begin{centering}
\includegraphics[width=0.49\linewidth]{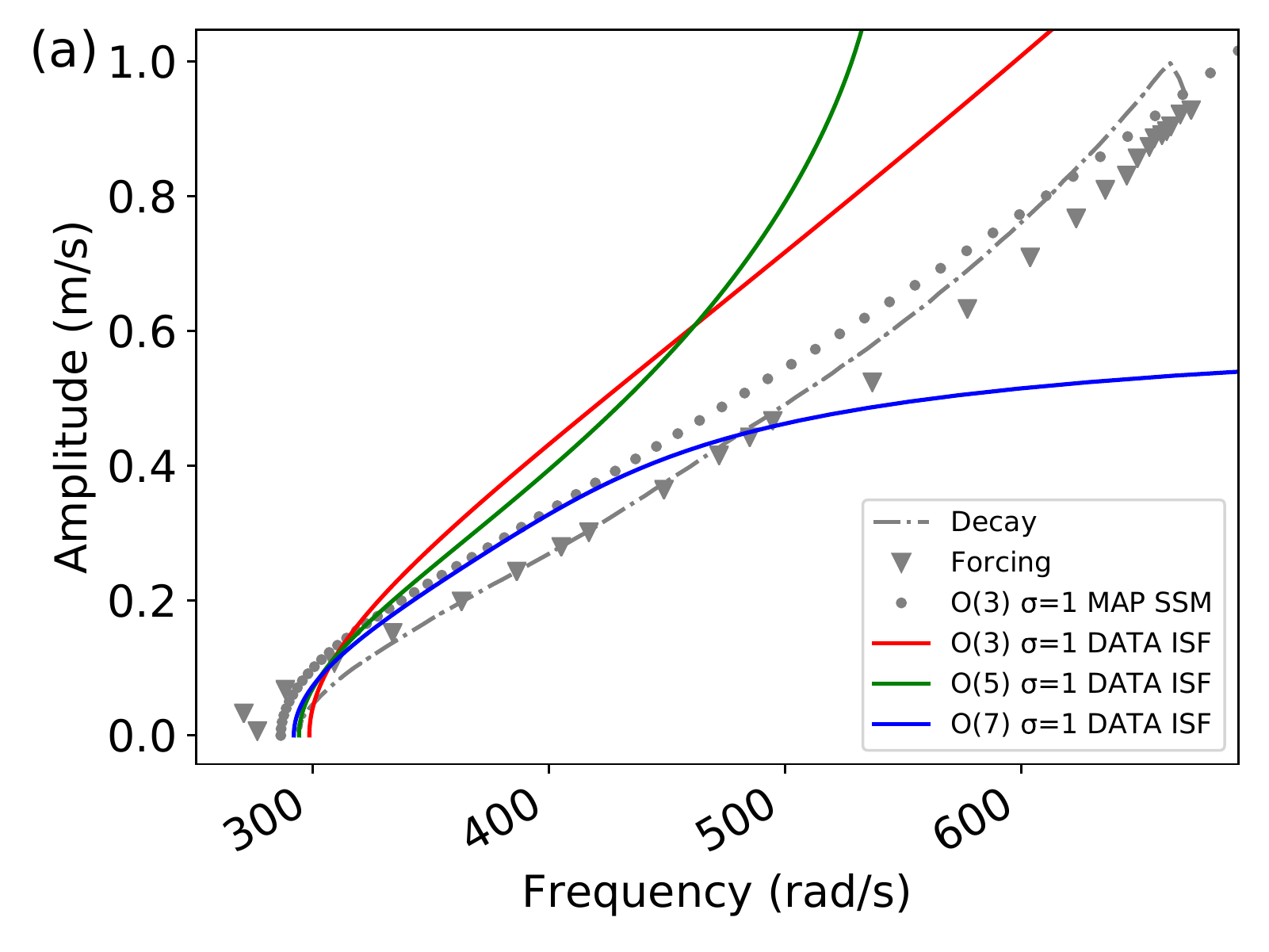}\includegraphics[width=0.49\linewidth]{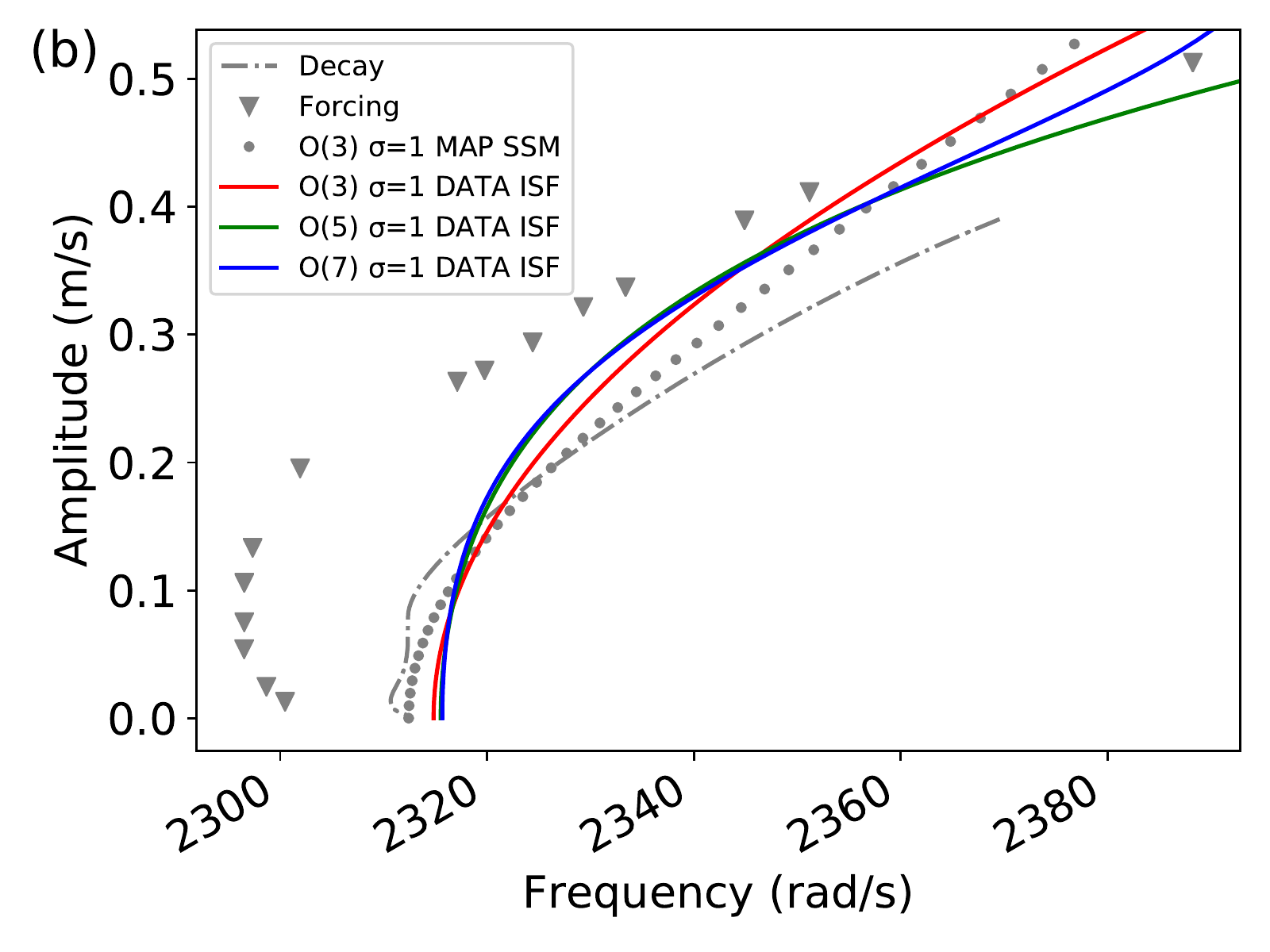}\\
\includegraphics[width=0.49\linewidth]{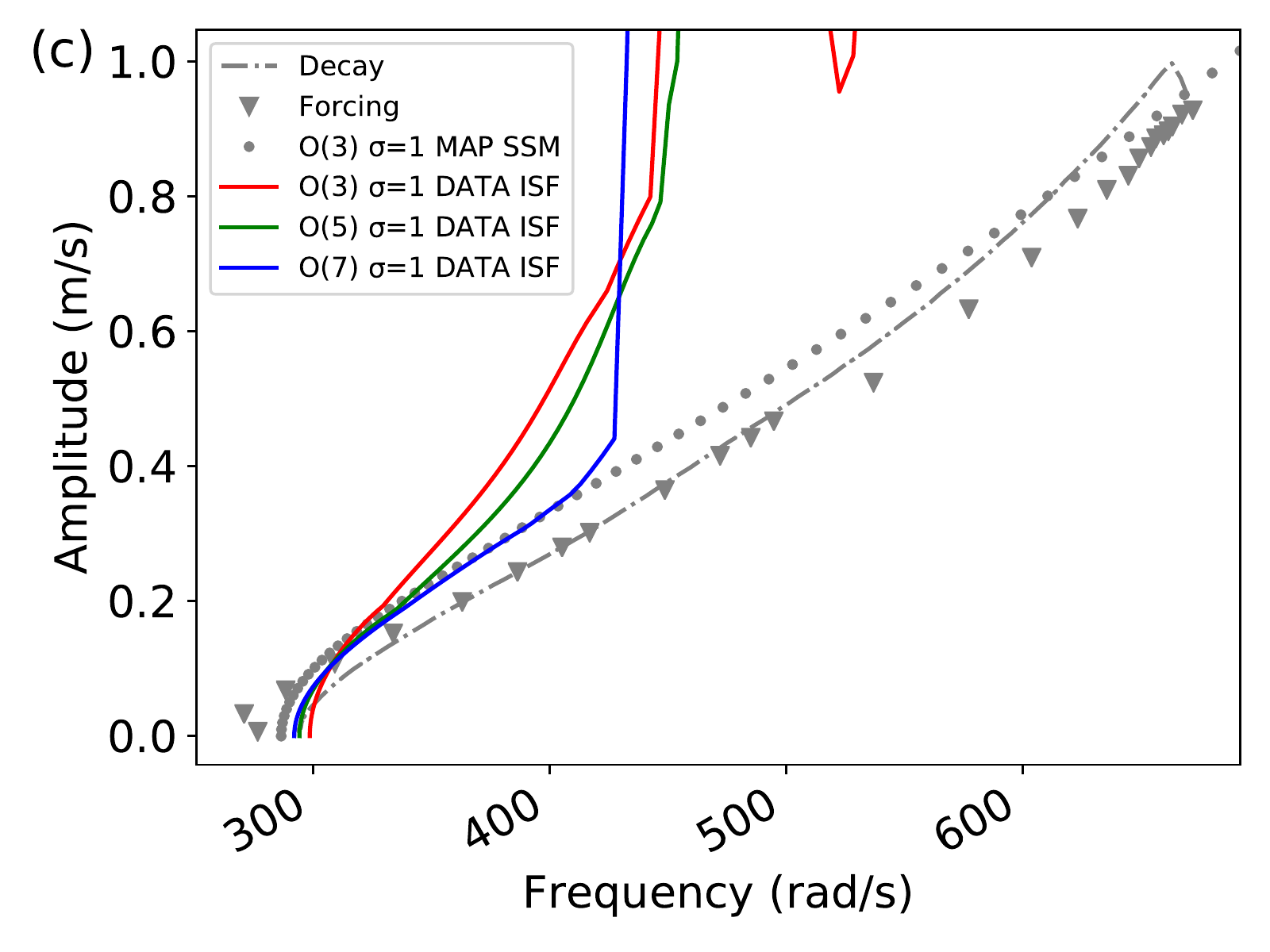}\includegraphics[width=0.49\linewidth]{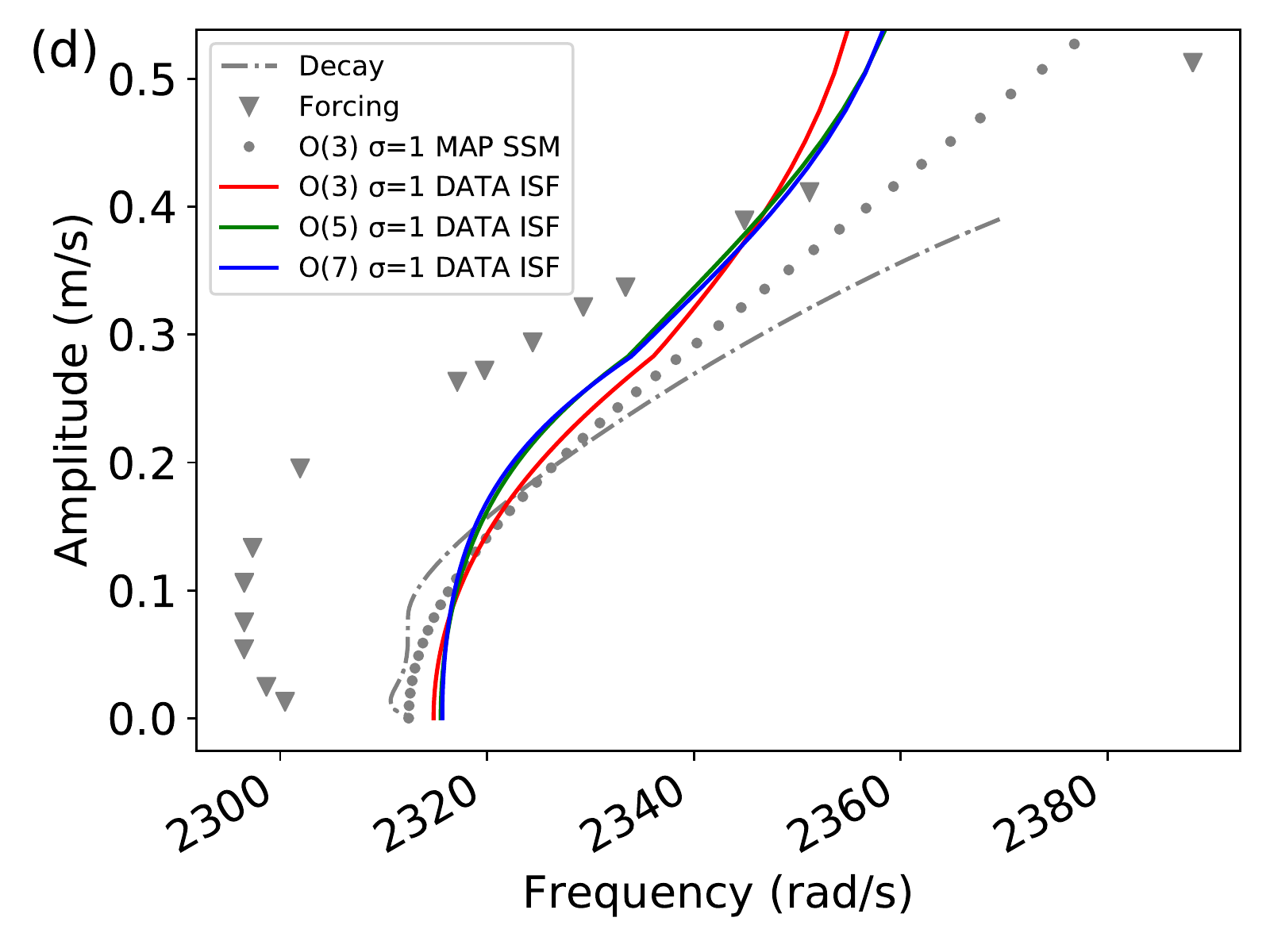}\\
\includegraphics[width=0.49\linewidth]{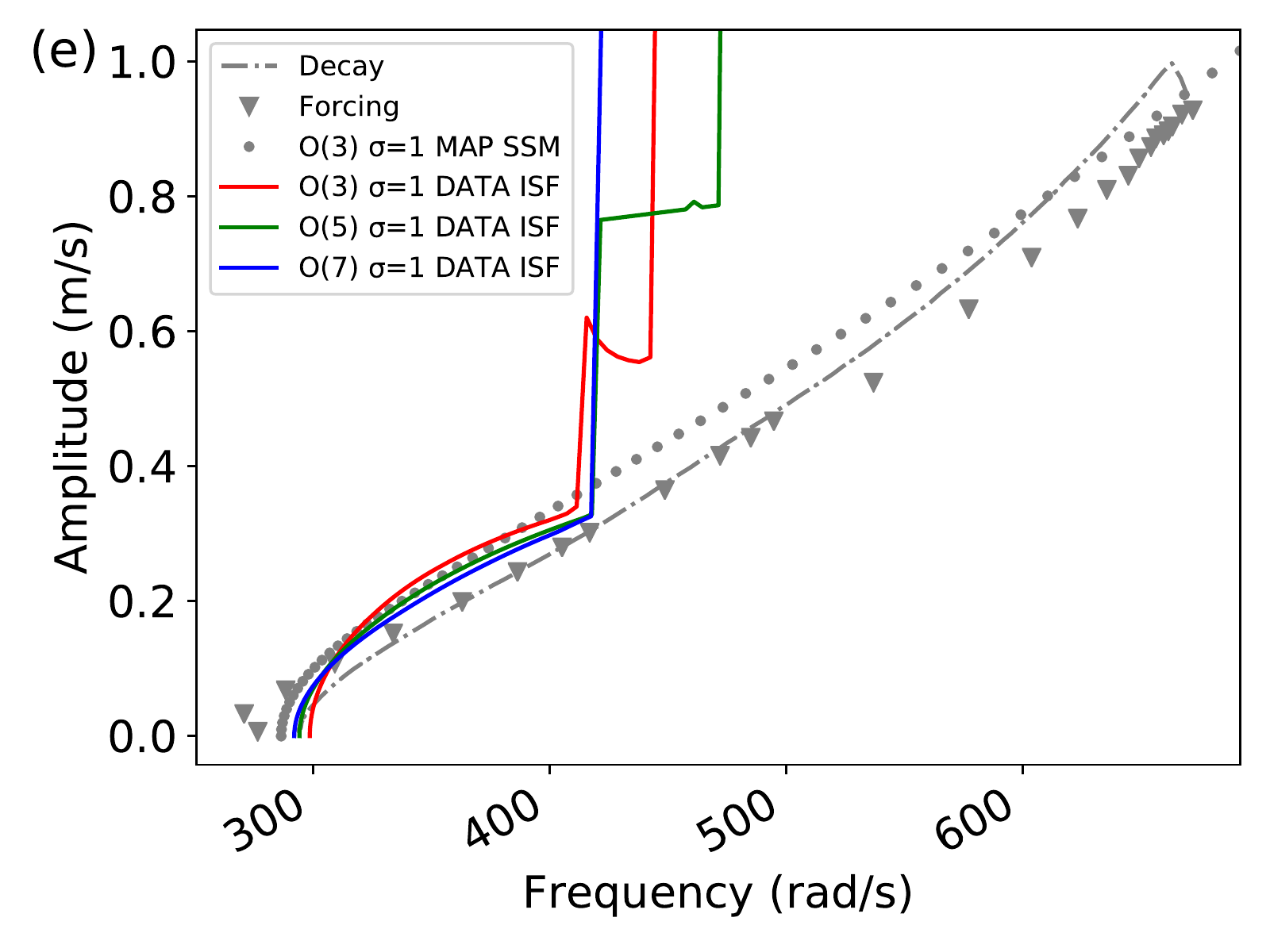}\includegraphics[width=0.49\linewidth]{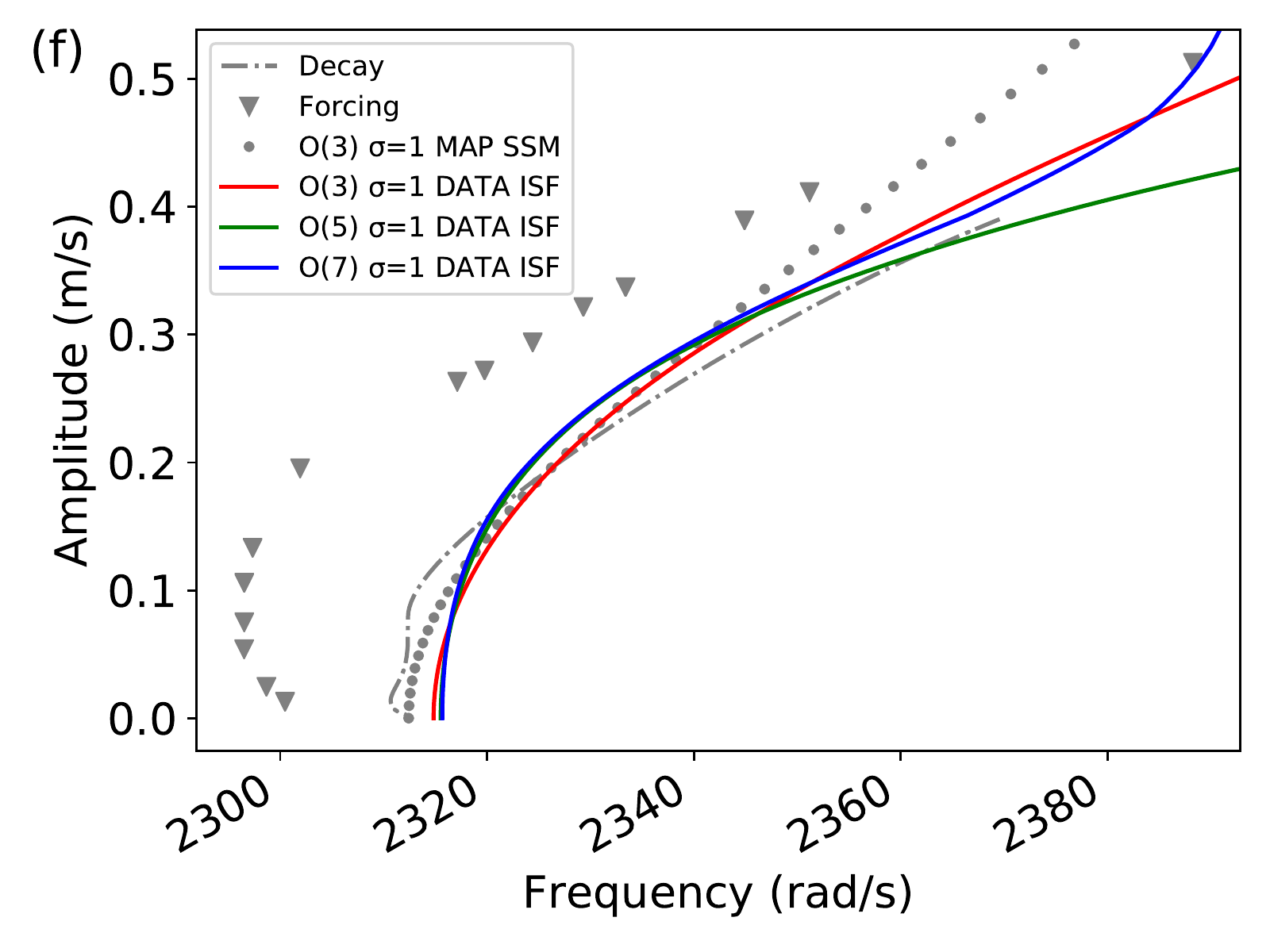}
\par\end{centering}
\caption{\label{fig:CCbeamBackbone}Backbone curves for the clamped-clamped
beam. Definition (\ref{eq:ISFbackbone}) and $\boldsymbol{W}_{\boldsymbol{z}}$
calculated by the polynomial iteration (\ref{eq:LeafGraphIteration})
were used in (a,b). Definition (\ref{eq:ISFbackbone}) and $\boldsymbol{W}_{\boldsymbol{z}}$
calculated by Newton's method from (\ref{eq:LeafGraphPreIt}) were
used in (c,d). Definition (\ref{eq:SSMbackbone}) was used in (e,f)
with $\boldsymbol{W}^{j}$ calculated using Newton's method. MAP means
a polynomial model fit was carried out first, DATA means that the
ISF was directly fitted to the data, $O(\alpha)$ indicates that order
$\alpha$ polynomials were used.}

\end{figure}

\section{Discussion and conclusions}

The paper has introduced invariant spectral foliations (ISF) as a
tool to derive reduced order models (ROM) of dynamic systems about
equilibria. The major advantage of this approach over other methods
is that the full dynamics can be reconstructed from a set of ISFs.
ISFs can also be fitted to data directly as opposed to SSMs. Direct
fitting ensures that the invariance equation is satisfied for the
data points with maximum accuracy, without using intermediate representations,
such as a black-box model. We have shown that the indirect fitting
of ISFs can result in high residuals compared to direct fitting. The
major disadvantage of an ISF is that is requires a submersion (function)
that depends on the same number of parameters as the dimensionality
of the phase-space. Therefore for high-dimensional problems a polynomial
representation will not be suitable, because the number of parameters
required to represent the ISF will be in the order of $\nu\binom{n+\alpha}{n}$,
where $n$ is the dimension of the phase space, $\nu$ is the co-dimension
of the ISF and $\alpha$ is the order of the polynomial. SSMs in contrast
are represented by immersions that depend on small number of parameters
even though they map into high dimensional spaces, so the required
number of parameters are about $n\binom{\nu+\alpha}{\nu}$, where
$\nu$ is the dimension of the SSM. Therefore SSMs can be efficiently
represented. However finding SSMs requires a model, be it black-box
or physical, that might also be difficult to represent efficiently.
In essence, the problem of dimensionality occurs at different levels
of representations with ISFs and SSMs. One promising approach to represent
submersions with minimum number of parameters is to use deep neural
networks \cite{Blcskei2017OptimalAW,grohs2019deep}, or other kinds
of nonlinear approximation methods \cite{devore1998}, that allow
to represent high dimensional functions with reasonable efficiency
as opposed to polynomials. The challenge with nonlinear approximations,
in particular with neural networks is that they can be difficult to
fit to data, because the distance between parameters that provide
small improvements in accuracy can be large and therefore not easy
to find \cite{petersen2018topological}. Nevertheless, deep neural
networks have enabled great advances in many fields of engineering
and therefore this approach will be explored elsewhere.

One important aspect of any calculation or prediction is whether it
is repeatable. In particular, the mathematically defined object should
be the same regardless of what numerical method is used to calculate
it. This aspect is determined by the uniqueness properties of the
mathematical object one wants to calculate. One particularly desirable
feature of an ISF is that its representation only needs to be once
differentiable when it is calculated for the slowest dynamics (as
in section \ref{subsec:UniqueExample}) or twice differentiable as
per theorem \ref{thm:MapFoliation} to be unique. This is in contrast
with SSMs, where the SSM containing the slowest dynamics must be many
times differentiable (as given by the SSM spectral quotient). The
required order of smoothness gets higher if there is a time-scale
separation with an increasingly fast dynamics in the system. As smoothness
is difficult to quantify numerically, calculating unique SSMs can
be a challenge. In this aspect, ISFs offer a theoretical advantage
over SSMs, which needs to be verified in practical examples.

The practical aspects of finding an ISF remains to be investigated.
One question is what type of data is required to obtain an accurate
ISF. To answer this question a rigorous statistical analysis on how
the accuracy of the ISF depends on the amount, the type and the uncertainty
of data is necessary. For mechanical systems impact hammer tests seem
to be a simple way to obtain free-decay vibration data. However such
data might need to be pre-processed in order to achieve a certain
distribution of data points within the phase space. In some cases
it might not be possible to obtain data from certain parts of the
phase space, similar to the clamped-clamped beam example, hence the
effect of missing data also needs to be explored.

\appendix

\section{\label{sec:ProofThm1}Proof of theorem \ref{thm:MapFoliation}}

\subsection{\label{subsec:MAPexpansion}Polynomial expansion}

In this section we find an approximate solution to the invariance
equation (\ref{eq:adjInvariance}) in the form of a power series.
Here we employ a complexification \cite{Arnold1992ordinary} of the
vector space $\mathbb{R}^{n}$ which is isomorphic to $\mathbb{C}^{n}$
and therefore the complexified map $\boldsymbol{F}$ may not be defined
on the whole of $\mathbb{C}^{n}$, because $\boldsymbol{F}$ is not
an entire function. We now apply a linear transformation $\boldsymbol{T}$
to the map $\boldsymbol{F}$, such that its Jacobian about the equilibrium
becomes a diagonal matrix and the map assumes the form
\begin{equation}
\boldsymbol{F}\left(\boldsymbol{x}\right)=\boldsymbol{A}\boldsymbol{x}+\begin{pmatrix}\begin{array}{l}
\boldsymbol{N}_{1}\left(\boldsymbol{x}\right)\\
\boldsymbol{N}_{2}\left(\boldsymbol{x}\right)
\end{array}\end{pmatrix},\label{eq:SplitMap}
\end{equation}
where,
\[
\boldsymbol{A}=\begin{pmatrix}\boldsymbol{A}_{1} & \boldsymbol{0}\\
\boldsymbol{0} & \boldsymbol{A}_{2}
\end{pmatrix},\;\text{\ensuremath{\boldsymbol{A}_{1}}}=\begin{pmatrix}\mu_{1}\\
 & \ddots\\
 &  & \mu_{\nu}
\end{pmatrix},\;\text{\ensuremath{\boldsymbol{A}_{2}}}=\begin{pmatrix}\mu_{\nu+1}\\
 & \ddots\\
 &  & \mu_{n}
\end{pmatrix}.
\]
If there is a complex conjugate pair of eigenvalues $\mu_{k}=\overline{\mu}_{k+1}$
the corresponding components of the state variable must also be complex
conjugate $x_{k}=\overline{x}_{k+1}$. Similarly, if $\mu_{k}$ is
real, $x_{k}$ must also be real. 

To arrive at an approximate solution of the invariance equation (\ref{eq:adjInvariance}),
we represent the unknowns as power series in the form of
\begin{align*}
\boldsymbol{S}\left(\boldsymbol{z}\right) & =\sum_{0<\left|\boldsymbol{m}\right|<\sigma}\boldsymbol{S}^{\boldsymbol{m}}\boldsymbol{z}^{\boldsymbol{m}}+\mathcal{O}\left(\left|\boldsymbol{z}\right|^{\sigma+1}\right),\\
\boldsymbol{U}\left(\boldsymbol{x}\right) & =\sum_{0<\left|\boldsymbol{m}\right|<\sigma}\boldsymbol{U}^{\boldsymbol{m}}\boldsymbol{x}^{\boldsymbol{m}}+\mathcal{O}\left(\left|\boldsymbol{x}\right|^{\sigma+1}\right),
\end{align*}
where the powers are interpreted as
\[
\boldsymbol{x}^{\boldsymbol{m}}=x_{1}^{m_{1}}\cdots x_{n}^{m_{n}},\qquad\boldsymbol{z}^{\boldsymbol{m}}=z_{1}^{m_{1}}\cdots z_{\nu}^{m_{\nu}},
\]
with $\boldsymbol{m}\in\mathbb{N}^{n}$ or $\boldsymbol{m}\in\mathbb{N}^{\nu}$,
respectively. The notation used here is explained in section \ref{subsec:DATAexpansionOpt}.
The equation for the linear terms in the invariance equation becomes
\[
\sum_{k=1}^{n}U_{j}^{\boldsymbol{e}_{k}}A_{k}^{\boldsymbol{e}_{l}}=\sum_{k=1}^{\nu}S_{j}^{\boldsymbol{e}_{k}}U_{k}^{\boldsymbol{e}_{l}},
\]
which does not have a unique solution, however we choose $U_{j}^{\boldsymbol{e}_{k}}=\delta_{jk}$,
$S_{j}^{\boldsymbol{e}_{k}}=\mu_{k}\delta_{jk}$. This is a normalising
constraint that will be taken into account when we prove the uniqueness
of the foliation $\mathcal{F}$. The equations for the order $\left|\boldsymbol{m}\right|$
terms in the invariance equation are written as
\begin{align}
U_{j}^{\boldsymbol{m}}\left(\prod_{k=1}^{\nu}\mu_{k}^{m_{k}}\right)\boldsymbol{x}^{\boldsymbol{m}} & =\mu_{j}U_{j}^{\boldsymbol{m}}\boldsymbol{x}^{\boldsymbol{m}}+S_{j}^{\boldsymbol{m}}\boldsymbol{x}^{\boldsymbol{m}}+H_{j}^{\boldsymbol{m}}\boldsymbol{x}^{\boldsymbol{m}}, & \negthickspace\negthickspace\negthickspace\negthickspace\negthickspace\negthickspace\negthickspace\negthickspace\negthickspace\negthickspace m_{\nu+1}=\cdots=m_{n}=0,\label{eq:homologyInternal}\\
U_{j}^{\boldsymbol{m}}\left(\prod_{k=1}^{n}\mu_{k}^{m_{k}}\right)\boldsymbol{x}^{\boldsymbol{m}} & =\mu_{j}U_{j}^{\boldsymbol{m}}\boldsymbol{x}^{\boldsymbol{m}}+H_{j}^{\boldsymbol{m}}\boldsymbol{x}^{\boldsymbol{m}}, & \negthickspace\negthickspace\negthickspace\negthickspace\negthickspace\negthickspace\negthickspace\negthickspace\negthickspace\negthickspace\exists l\in\left\{ \nu+1,\ldots,n\right\} :m_{l}\neq0,\label{eq:homologyExternal}
\end{align}
where $H_{j}^{\boldsymbol{m}}$ are the terms which are composed of
lower order terms of $\boldsymbol{U}$ and $\boldsymbol{S}$ and known
$\left|\boldsymbol{m}\right|$-th order terms of $\boldsymbol{F}$.
The equations are written for two different kinds of exponents. Equation
(\ref{eq:homologyInternal}) is for exponents that exist for both
$\boldsymbol{U}$ and $\boldsymbol{S}$ and therefore part of the
conjugate dynamics, equation (\ref{eq:homologyExternal}) is for exponents
that only identify terms in $\boldsymbol{U}$ and they correspond
to dynamics that occurs inside the leaves. Equations (\ref{eq:homologyInternal})
and (\ref{eq:homologyExternal}) are solved recursively starting with
$\left|\boldsymbol{m}\right|=2$ and then in increasing order for
$\left|\boldsymbol{m}\right|>2$.

Equation (\ref{eq:homologyInternal}) can be solved under any circumstances,
but there are multiple solutions. The terms $U_{j}^{\boldsymbol{m}}$
and $S_{j}^{\boldsymbol{m}}$ can be chosen relative to each other.
If there is an internal resonance or near internal resonance, that
is $\prod_{k=1}^{\nu}\mu_{k}^{m_{k}}\approx\mu_{j}$ we can choose
the solution 
\begin{equation}
U_{j}^{\boldsymbol{m}}=0,\quad S_{j}^{\boldsymbol{m}}=H_{j}^{\boldsymbol{m}},\label{eq:YesInternalResonance}
\end{equation}
otherwise we can also choose
\begin{equation}
U_{j}^{\boldsymbol{m}}=\frac{1}{\prod_{k=1}^{\nu}\mu_{k}^{m_{k}}-\mu_{j}}H_{j}^{\boldsymbol{m}},\quad S_{j}^{\boldsymbol{m}}=0\label{eq:NoInternalResonance}
\end{equation}
or some other combination of $U_{j}^{\boldsymbol{m}}$, $S_{j}^{\boldsymbol{m}}$.
The choice made here is another normalising condition and as we see
it will not affect the uniqueness of the foliation. Equation (\ref{eq:homologyExternal})
has a unique solution if $\prod_{k=1}^{n}\mu_{k}^{m_{k}}\neq\mu_{j}$,
which is
\begin{equation}
U_{j}^{\boldsymbol{m}}=\frac{1}{\prod_{k=1}^{n}\mu_{k}^{m_{k}}-\mu_{j}}H_{j}^{\boldsymbol{m}},\label{eq:NoExternalResonance}
\end{equation}
otherwise no solution exists, unless $H_{j}^{\boldsymbol{m}}$ vanishes,
which is unlikely.

Now we examine what is the order of expansion after which no resonances
are possible. All resonances are excluded when
\begin{equation}
\prod_{k=1}^{n}\mu_{k}^{m_{k}}\neq\mu_{j},\;j=1\ldots\nu.\label{eq:NoExtresAll}
\end{equation}
We describe two cases when (\ref{eq:NoExtresAll}) is satisfied. The
first is given by
\begin{align}
\left|\prod_{k=1}^{n}\mu_{k}^{m_{k}}\right| & <\left|\mu_{j}\right|,\label{eq:NoExtresLess}\\
\left|\boldsymbol{m}\right|\max_{k=1\ldots n}\log\left|\mu_{k}\right| & <\min_{j=1\ldots\nu}\log\left|\mu_{j}\right|,\nonumber 
\end{align}
which holds when $\max_{k=1\ldots n}\log\left|\mu_{k}\right|<0$ and
$\left|\boldsymbol{m}\right|>\beth_{E}$. Note that $\max_{k=1\ldots n}\log\left|\mu_{k}\right|=0$
implies that $\min_{j=1\ldots\nu}\log\left|\mu_{j}\right|>0$, which
are mutually exclusive conditions. Similarly, assuming $\max_{k=1\ldots n}\log\left|\mu_{k}\right|>0$
yields that $\left|\boldsymbol{m}\right|<\beth_{E}$, which is an
upper bound and therefore not applicable. The second case of (\ref{eq:NoExtresAll})
occurs when 
\begin{align*}
\left|\prod_{k=1}^{n}\mu_{k}^{m_{k}}\right| & >\left|\mu_{j}\right|,\\
\sum_{k=1\ldots n}m_{k}\log\left|\mu_{k}\right| & >\max_{j=1\ldots\nu}\log\left|\mu_{j}\right|,
\end{align*}
which yields a meaningful condition if $\min_{k=1\ldots n}\log\left|\mu_{k}\right|>0$,
that is 
\begin{equation}
\left|\boldsymbol{m}\right|>\frac{\max_{j=1\ldots\nu}\log\left|\mu_{j}\right|}{\min_{k=1\ldots n}\log\left|\mu_{k}\right|}.\label{eq:NoExtresGreater}
\end{equation}
However case (\ref{eq:NoExtresGreater}) is equivalent to (\ref{eq:NoExtresLess})
when the inverse $\boldsymbol{F}^{-1}$ is considered. 

In summary, we do not need to consider resonances for $\left|\boldsymbol{m}\right|>\beth_{E}$
when $\max_{k=1\ldots n}\log\left|\mu_{k}\right|<0$. Therefore we
choose the smallest $\sigma\in\mathbb{N}^{+}$, such that $\beth_{E}<\sigma$
and denote the truncated series expanded solution of the invariance
equation by $\boldsymbol{U}^{\le}$ and $\boldsymbol{S}^{\le}$.

\subsection{\label{subsec:ContractionMapping}Contraction mapping}

Here we show that once an order $\sigma-1$ asymptotic solution of
the invariance equation is found, then there is a unique $C^{\sigma}$
correction of the asymptotic solution $\boldsymbol{U}^{\le}$ and
$\boldsymbol{S}^{\le}$, so that the invariance equation (\ref{eq:adjInvariance})
is exactly satisfied. For the following argument we re-scale the map
$\boldsymbol{F}$, such that $\boldsymbol{F}_{\gamma}\left(\boldsymbol{x}\right)=\gamma^{-1}\boldsymbol{F}_{\gamma}\left(\gamma\boldsymbol{x}\right)$,
where $\gamma>0$. We have now solved the invariance equation up to
order $\sigma-1$ and therefore the invariance equation (\ref{eq:adjInvariance})
has an order $\sigma$ residual when the approximation is substituted.

Let us now fix $\boldsymbol{S}=\boldsymbol{S}^{\le}$ and decompose
the exact solution into $\boldsymbol{U}=\boldsymbol{U}^{\le}+\boldsymbol{U}^{>}$,
where $\boldsymbol{U}^{>}$ is a $C^{\sigma}$ function. To obtain
an iterative solution, we apply the inverse $\boldsymbol{S}^{-1}$
to the invariance equation (\ref{eq:adjInvariance}) and find that
\begin{equation}
\boldsymbol{U}^{>}\left(\boldsymbol{x}\right)=\boldsymbol{S}^{-1}\left(\boldsymbol{U}^{\le}\left(\boldsymbol{F}_{\gamma}\left(\boldsymbol{x}\right)\right)+\boldsymbol{U}^{>}\left(\boldsymbol{F}_{\gamma}\left(\boldsymbol{x}\right)\right)\right)-\boldsymbol{U}^{\le}\left(\boldsymbol{x}\right).\label{eq:FixedPointEquation}
\end{equation}
Equation (\ref{eq:FixedPointEquation}) can be re-cast as a fixed
point iteration using a nonlinear operator. This nonlinear operator
is defined as

\begin{equation}
\left[\mathcal{T}\left(\boldsymbol{U}^{>}\right)\right]\left(\boldsymbol{x}\right)=\boldsymbol{S}^{-1}\left(\boldsymbol{U}^{\le}\left(\boldsymbol{F}_{\gamma}\left(\boldsymbol{x}\right)\right)+\boldsymbol{U}^{>}\left(\boldsymbol{F}_{\gamma}\left(\boldsymbol{x}\right)\right)\right)-\boldsymbol{U}^{\le}\left(\boldsymbol{x}\right).\label{eq:FixedPointOperator}
\end{equation}
We define operator $\mathcal{T}$ on the space 
\[
\boldsymbol{X}^{\sigma}=\left\{ \boldsymbol{U}\in C^{\sigma}\left(B^{n},\mathbb{R}^{\nu}\right):D^{k}\boldsymbol{U}\left(\boldsymbol{0}\right)=\boldsymbol{0},k=0,1,\ldots,\sigma-1\right\} ,
\]
where $B^{n}=\left\{ \boldsymbol{x}\in\mathbb{R}^{n}:\left|\boldsymbol{x}\right|\le1\right\} $
is the closed unit ball. The norm on $\boldsymbol{X}^{\sigma}$ is
chosen to be
\begin{equation}
\left\Vert \boldsymbol{U}\right\Vert _{\sigma}=\sup_{\left|\boldsymbol{x}\right|\le1}\left|\boldsymbol{x}\right|^{-\sigma}\left|\boldsymbol{U}\left(\boldsymbol{x}\right)\right|,\label{eq:SigmaNorm}
\end{equation}
which makes $\boldsymbol{X}^{\sigma}$ a Banach space.

Next, we show that operator $\mathcal{T}$ is a contraction. In particular,
we show that there exists a constant $L<1$, such that 
\begin{equation}
\left\Vert \mathcal{T}\left(\boldsymbol{U}_{2}\right)-\mathcal{T}\left(\boldsymbol{U}_{1}\right)\right\Vert _{\sigma}\le L\left\Vert \boldsymbol{U}_{2}-\boldsymbol{U}_{1}\right\Vert _{\sigma}\label{eq:TLipschitz}
\end{equation}
and that $\left\Vert \mathcal{T}\left(\boldsymbol{U}\right)\right\Vert _{\sigma}\le1$
for all $\left\Vert \boldsymbol{U}\right\Vert _{\sigma}\le1$. The
latter criteria can be demonstrated through the Lipschitz condition
(\ref{eq:TLipschitz}) using the estimate
\begin{align*}
\left\Vert \mathcal{T}\left(\boldsymbol{U}\right)\right\Vert _{\sigma}=\left\Vert \mathcal{T}\left(\boldsymbol{U}\right)-\mathcal{T}\left(\boldsymbol{0}\right)+\mathcal{T}\left(\boldsymbol{0}\right)\right\Vert _{\sigma} & \le\left\Vert \mathcal{T}\left(\boldsymbol{U}\right)-\mathcal{T}\left(\boldsymbol{0}\right)\right\Vert _{\sigma}+\left\Vert \mathcal{T}\left(\boldsymbol{0}\right)\right\Vert _{\sigma}\\
 & \le L\left\Vert \boldsymbol{U}\right\Vert _{\sigma}+\left\Vert \mathcal{T}\left(\boldsymbol{0}\right)\right\Vert _{\sigma},
\end{align*}
which means that we need to have $\left\Vert \mathcal{T}\left(\boldsymbol{0}\right)\right\Vert _{\sigma}<1-L$
for $\mathcal{T}$ to be a contraction.

We start by estimating $\left\Vert \mathcal{T}\left(\boldsymbol{0}\right)\right\Vert _{\sigma}$.
Due to the polynomial approximation, we have 
\begin{align*}
\boldsymbol{S}^{-1}\left(\boldsymbol{U}^{\le}\left(\boldsymbol{F}\left(\boldsymbol{x}\right)\right)\right)-\boldsymbol{U}^{\le}\left(\boldsymbol{x}\right) & =\mathcal{O}\left(\boldsymbol{x}^{\sigma}\right),
\end{align*}
which implies that there exists $M>0$ such that
\[
\left|\boldsymbol{S}^{-1}\left(\boldsymbol{U}^{\le}\left(\boldsymbol{F}\left(\boldsymbol{x}\right)\right)\right)-\boldsymbol{U}^{\le}\left(\boldsymbol{x}\right)\right|\le M\left|\boldsymbol{x}\right|^{\sigma}.
\]
Using the scaled nonlinear map $\boldsymbol{F}_{\gamma}$ and the
scaled approximate solutions, the estimate now scales with $\gamma$
as
\begin{multline*}
\left|\boldsymbol{S}_{\gamma}^{-1}\left(\boldsymbol{U}_{\gamma}^{\le}\left(\boldsymbol{F}_{\gamma}\left(\boldsymbol{x}\right)\right)\right)-\boldsymbol{U}_{\gamma}^{\le}\left(\boldsymbol{x}\right)\right|\\
=\left|\gamma^{-1}\boldsymbol{S}^{-1}\left(\boldsymbol{U}^{\le}\left(\boldsymbol{F}\left(\gamma\boldsymbol{x}\right)\right)\right)-\gamma^{-1}\boldsymbol{U}^{\le}\left(\gamma\boldsymbol{x}\right)\right|\le M\gamma^{\sigma-1}\left|\boldsymbol{x}\right|^{\sigma}.
\end{multline*}
This result implies that $\left\Vert \mathcal{T}\left(\boldsymbol{0}\right)\right\Vert \le\gamma^{\sigma-1}M.$
Since $\sigma\ge2$, the norm $\left\Vert \mathcal{T}\left(\boldsymbol{0}\right)\right\Vert $
can be made arbitrarily small and therefore any Lipschitz constant
$0\le L<1$ is sufficient to demonstrate a unique fixed point of $\mathcal{T}$.

Next we need to show that there is a Lipschitz constant $0\le L<1$
in equation (\ref{eq:TLipschitz}). We use the fundamental theorem
of calculus to make a calculation similar to 
\[
f\left(x_{2}\right)-f\left(x_{1}\right)=\int_{0}^{1}f^{\prime}\left(x_{1}+s\left(x_{2}-x_{1}\right)\right)\mathrm{d}s\left(x_{2}-x_{1}\right).
\]
For our operator $\mathcal{T}$, we write that
\begin{multline*}
\boldsymbol{U}_{2}^{>}\left(\boldsymbol{x}\right)-\boldsymbol{U}_{1}^{>}\left(\boldsymbol{x}\right)=\\
=\int_{0}^{1}D\boldsymbol{S}^{-1}\left(\boldsymbol{U}^{\le}\left(\boldsymbol{F}_{\gamma}\left(\boldsymbol{x}\right)\right)+\boldsymbol{U}_{1}^{>}\left(\boldsymbol{F}_{\gamma}\left(\boldsymbol{x}\right)\right)+s\left(\boldsymbol{U}_{2}^{>}\left(\boldsymbol{F}_{\gamma}\left(\boldsymbol{x}\right)\right)-\boldsymbol{U}_{1}^{>}\left(\boldsymbol{F}_{\gamma}\left(\boldsymbol{x}\right)\right)\right)\right)\mathrm{d}s\times\\
\qquad\times\left(\boldsymbol{U}_{2}^{>}\left(\boldsymbol{F}_{\gamma}\left(\boldsymbol{x}\right)\right)-\boldsymbol{U}_{1}^{>}\left(\boldsymbol{F}_{\gamma}\left(\boldsymbol{x}\right)\right)\right).
\end{multline*}
Due to the scaling, $\boldsymbol{S}$ becomes linear as $\gamma\to0$,
hence we can find $\epsilon_{1}\left(\gamma\right)$ such that $\lim_{\gamma\to0}\epsilon_{1}\left(\gamma\right)=0$
and 
\[
\sup_{\left|\boldsymbol{z}\right|\le1}\left|D\boldsymbol{S}^{-1}\left(\boldsymbol{z}\right)\right|=\left(\min_{k=1\ldots\nu}\mu_{k}\right)^{-1}+\epsilon_{1}\left(\gamma\right).
\]
This implies the estimate
\[
\left|\boldsymbol{U}_{2}^{>}\left(\boldsymbol{x}\right)-\boldsymbol{U}_{1}^{>}\left(\boldsymbol{x}\right)\right|\le\left(\left(\min_{k=1\ldots\nu}\mu_{k}\right)^{-1}+\epsilon_{1}\left(\gamma\right)\right)\left|\boldsymbol{U}_{2}^{>}\left(\boldsymbol{F}_{\gamma}\left(\boldsymbol{x}\right)\right)-\boldsymbol{U}_{1}^{>}\left(\boldsymbol{F}_{\gamma}\left(\boldsymbol{x}\right)\right)\right|.
\]
In the next step, we are estimating the effect of the inner function
$\boldsymbol{F}_{\gamma}$ of $\boldsymbol{U}_{1,2}^{>}$ by way of
the $\sigma$-norm, that is
\begin{align*}
\left|\boldsymbol{x}\right|^{-\sigma}\left|\boldsymbol{U}_{2}^{>}\left(\boldsymbol{x}\right)-\boldsymbol{U}_{1}^{>}\left(\boldsymbol{x}\right)\right| & \le\left(\left(\min_{k=1\ldots\nu}\left|\mu_{k}\right|\right)^{-1}+\epsilon_{1}\left(\gamma\right)\right)\left|\boldsymbol{x}\right|^{-\sigma}\left|\boldsymbol{U}_{2}^{>}\left(\boldsymbol{F}_{\gamma}\left(\boldsymbol{x}\right)\right)-\boldsymbol{U}_{1}^{>}\left(\boldsymbol{F}_{\gamma}\left(\boldsymbol{x}\right)\right)\right|\\
 & \le\left(\left(\min_{k=1\ldots\nu}\left|\mu_{k}\right|\right)^{-1}+\epsilon_{1}\left(\gamma\right)\right)\left|\boldsymbol{F}_{\gamma}\left(\boldsymbol{x}\right)\right|^{\sigma}\left\Vert \boldsymbol{U}_{2}^{>}-\boldsymbol{U}_{1}^{>}\right\Vert _{\sigma}.
\end{align*}
Similar to the previous estimates, there exists $\epsilon_{2}\left(\gamma\right)$
such that $\lim_{\gamma\to0}\epsilon_{2}\left(\gamma\right)=0$ and
\[
\sup_{\left|\boldsymbol{x}\right|\le1}\left|\boldsymbol{F}_{\gamma}\left(\boldsymbol{x}\right)\right|=\max_{k=1\ldots n}\left|\mu_{k}\right|+\epsilon_{2}\left(\gamma\right).
\]
Putting together the previous estimates we find that 
\begin{equation}
\left\Vert \mathcal{T}\left(\boldsymbol{U}_{2}\right)-\mathcal{T}\left(\boldsymbol{U}_{1}\right)\right\Vert _{\sigma}\le\left(\left(\min_{k=1\ldots\nu}\left|\mu_{k}\right|\right)^{-1}+\epsilon_{1}\left(\gamma\right)\right)\left(\max_{k=1\ldots n}\left|\mu_{k}\right|+\epsilon_{2}\left(\gamma\right)\right)^{\sigma}\left\Vert \boldsymbol{U}_{2}^{>}-\boldsymbol{U}_{1}^{>}\right\Vert _{\sigma}.\label{eq:ContractionEstimate}
\end{equation}
Since $\epsilon_{1}\to0$ and $\epsilon_{2}\to0$ as $\gamma\to0$,
we only need to show that
\begin{equation}
\left(\max_{k=1\ldots n}\left|\mu_{k}\right|\right)^{\sigma}\left(\min_{k=1\ldots\nu}\left|\mu_{k}\right|\right)^{-1}<1,\label{eq:ContractionCrit}
\end{equation}
so that the Lipschitz constant in equation (\ref{eq:ContractionEstimate})
is less than one. After rearranging the criterion (\ref{eq:ContractionCrit})
we find that
\[
\sigma\log\max_{k=1\ldots n}\left|\mu_{k}\right|<\log\min_{k=1\ldots\nu}\left|\mu_{k}\right|.
\]
Therefore operator $\mathcal{T}$ is a contraction if there exists
$\sigma\ge2$ such that one of the following cases apply:
\begin{align}
\max_{k=1\ldots n}\left|\mu_{k}\right| & <1\quad\implies & \beth_{E} & <\sigma,\label{eq:ContractionLess}\\
\max_{k=1\ldots n}\left|\mu_{k}\right| & =1\quad\implies & \min_{k=1\ldots\nu}\left|\mu_{k}\right| & >1,\label{eq:ContractionEqual}\\
\max_{k=1\ldots n}\left|\mu_{k}\right| & >1\quad\implies & \beth_{E} & >\sigma,\label{eq:ContractionGreater}
\end{align}
where 
\[
\beth_{E}=\frac{\min_{k=1\ldots\nu}\log\left|\mu_{k}\right|}{\max_{k=1\ldots n}\log\left|\mu_{k}\right|}.
\]
It turns out that only case (\ref{eq:ContractionLess}) is possible,
because case (\ref{eq:ContractionEqual}) stipulates mutually exclusive
conditions and (\ref{eq:ContractionGreater}) would allow resonances.
In fact, when $\max_{k=1\ldots n}\left|\mu_{k}\right|>1$ holds one
needs to consider the inverse map $\boldsymbol{F}^{-1}$ instead,
when applying theorem \ref{thm:MapFoliation}.

\subsection{\label{subsec:Uniqueness}Uniqueness}

In section \ref{subsec:MAPexpansion} we have made some normalising
assumptions, which restricted the parametrisation of the foliation.
Here we show that those assumptions can be removed and that any sufficiently
smooth submersion $\boldsymbol{U}$ satisfying the invariance equation
(\ref{eq:adjInvariance}) can be transformed so that it satisfies
the normalising assumptions and yet represents the same invariant
foliation, which then implies uniqueness.

Let us write variable $\boldsymbol{x}$ as a tuple $\boldsymbol{x}=\left(\boldsymbol{x}_{1},\boldsymbol{x}_{2}\right)$
such that $\boldsymbol{x}_{1}\in\mathbb{C}^{\nu}$ and $\boldsymbol{x}_{2}\in\mathbb{C}^{n-\nu}$
with the restrictions due to complexification. When finding a solution
of (\ref{eq:adjInvariance}) as per the argument in sections \ref{subsec:MAPexpansion}
and \ref{subsec:ContractionMapping}, we can use normalising conditions
such that the solution of (\ref{eq:homologyInternal}) satisfies $U_{j}^{\boldsymbol{e}_{k}}=\delta_{jk}$
and $U_{j}^{\boldsymbol{m}}=0$ when $\left|\boldsymbol{m}\right|\ge2$
and $m_{\nu+1}=\cdots=m_{n}=0$. This then implies that $\boldsymbol{U}\left(\boldsymbol{x}_{1},\boldsymbol{0}\right)=\boldsymbol{x}_{1}+\mathcal{O}\left(\left|\boldsymbol{x}_{1}\right|^{\sigma}\right)$.
Let us now denote the unique solution of (\ref{eq:adjInvariance})
under these normalising conditions by $\boldsymbol{U}$ and $\boldsymbol{S}$
and another $\sigma$-times differentiable solution of (\ref{eq:adjInvariance})
and (\ref{eq:adjTangency}) by $\hat{\boldsymbol{U}}$ and $\hat{\boldsymbol{S}}$.
We now look for a transformation $\boldsymbol{\Phi}$, such that $\boldsymbol{U}\left(\boldsymbol{x}_{1},\boldsymbol{0}\right)=\boldsymbol{\Phi}\left(\hat{\boldsymbol{U}}\left(\boldsymbol{x}_{1},\boldsymbol{0}\right)\right)$.
To help the notation, we define the function $\boldsymbol{\Psi}:\mathbb{C}^{\nu}\to\mathbb{C}^{\nu}$,
$\boldsymbol{\Psi}\left(\boldsymbol{z}\right)=\hat{\boldsymbol{U}}\left(\boldsymbol{z},\boldsymbol{0}\right)$,
which is an invertible and $\sigma$-times differentiable function
in a sufficiently small neighbourhood of the origin, because its Jacobian
at the origin is invertible due to the tangency condition (\ref{eq:adjTangency}).
Using the inverse $\boldsymbol{\Psi}^{-1}$, we find that $\boldsymbol{\Phi}\left(\boldsymbol{z}\right)=\boldsymbol{U}\left(\boldsymbol{\Psi}^{-1}\left(\boldsymbol{z}\right),\boldsymbol{0}\right)$,
which is again an invertible $\sigma$-times differentiable transformation.
Let us now define 
\[
\tilde{\boldsymbol{U}}\left(\boldsymbol{x}_{1},\boldsymbol{x}_{2}\right)=\boldsymbol{\Phi}\left(\hat{\boldsymbol{U}}\left(\boldsymbol{x}_{1},\boldsymbol{x}_{2}\right)\right),\;\tilde{\boldsymbol{S}}=\boldsymbol{\Phi}\circ\hat{\boldsymbol{S}}\circ\boldsymbol{\Phi}^{-1},
\]
which satisfy the invariance equation (\ref{eq:adjInvariance}), the
tangency condition (\ref{eq:adjTangency}) and our normalising conditions.
However we have shown that there is a unique solution to (\ref{eq:adjInvariance})
and (\ref{eq:adjTangency}) under the normalising conditions, hence
$\tilde{\boldsymbol{U}}=\boldsymbol{U}$ and $\tilde{\boldsymbol{S}}=\boldsymbol{S}$.
This means that solutions of (\ref{eq:adjInvariance}) and (\ref{eq:adjTangency})
can be re-parametrised into each other, therefore the invariant foliation
$\mathcal{F}$ represented by the submersions $\boldsymbol{U}$ or
$\hat{\boldsymbol{U}}$ is unique in a sufficiently small neighbourhood
of the origin.

This conclude the proof of theorem \ref{thm:MapFoliation}.

\section{\label{sec:VFexpansion}Series expansion of ISFs for vector fields}

In section \ref{subsec:MAPexpansion} we have provided an algorithm
to find a power-series expansion of an ISF for a discrete map. Here
we modify the algorithm for vector fields. We assume a first order
differential equation $\dot{\boldsymbol{x}}=\boldsymbol{G}\left(\boldsymbol{x}\right)$,
whose vector field is transformed into the frame of the eigenvectors
of its Jacobian about the origin, such that 
\begin{equation}
\boldsymbol{G}\left(\boldsymbol{x}\right)=\boldsymbol{B}\boldsymbol{x}+\begin{pmatrix}\begin{array}{l}
\boldsymbol{N}_{1}\left(\boldsymbol{x}\right)\\
\boldsymbol{N}_{2}\left(\boldsymbol{x}\right)
\end{array}\end{pmatrix},\label{eq:SplitMap-1}
\end{equation}
where,
\[
\boldsymbol{B}=\begin{pmatrix}\boldsymbol{B}_{1} & \boldsymbol{0}\\
\boldsymbol{0} & \boldsymbol{B}_{2}
\end{pmatrix},\;\text{\ensuremath{\boldsymbol{B}_{1}}}=\begin{pmatrix}\lambda_{1}\\
 & \ddots\\
 &  & \lambda_{\nu}
\end{pmatrix},\;\text{\ensuremath{\boldsymbol{B}_{2}}}=\begin{pmatrix}\lambda_{\nu+1}\\
 & \ddots\\
 &  & \lambda_{n}
\end{pmatrix}.
\]

To arrive at an approximate solution of the invariance equation (\ref{eq:adjInvariance}),
we represent the unknowns as power series in the form of
\begin{align*}
\boldsymbol{S}\left(\boldsymbol{z}\right) & =\sum_{0<\left|\boldsymbol{m}\right|\le\alpha}\boldsymbol{S}^{\boldsymbol{m}}\boldsymbol{z}^{\boldsymbol{m}}+\mathcal{O}\left(\left|\boldsymbol{z}\right|^{\alpha+1}\right),\\
\boldsymbol{U}\left(\boldsymbol{x}\right) & =\sum_{0<\left|\boldsymbol{m}\right|\le\alpha}\boldsymbol{U}^{\boldsymbol{m}}\boldsymbol{x}^{\boldsymbol{m}}+\mathcal{O}\left(\left|\boldsymbol{x}\right|^{\alpha+1}\right),
\end{align*}
where the powers are interpreted as
\[
\boldsymbol{x}^{\boldsymbol{m}}=x_{1}^{m_{1}}\cdots x_{n}^{m_{n}},\qquad\boldsymbol{z}^{\boldsymbol{m}}=z_{1}^{m_{1}}\cdots z_{\nu}^{m_{\nu}},
\]
with $\boldsymbol{m}\in\mathbb{N}^{n}$ or $\boldsymbol{m}\in\mathbb{N}^{\nu}$,
respectively. Using the notation in section \ref{subsec:DATAexpansionOpt},
the equation for the linear terms in the invariance equation becomes
\[
\sum_{k=1}^{n}U_{j}^{\boldsymbol{e}_{k}}B_{k}^{\boldsymbol{e}_{l}}=\sum_{k=1}^{\nu}S_{j}^{\boldsymbol{e}_{k}}U_{k}^{\boldsymbol{e}_{l}},
\]
which does not have a unique solution, however we choose $U_{j}^{\boldsymbol{e}_{k}}=\delta_{jk}$,
$S_{j}^{\boldsymbol{e}_{k}}=\lambda_{k}\delta_{jk}$, where $\delta_{jk}$
is the Kronecker delta. The equations for the order $\left|\boldsymbol{m}\right|$
terms in the invariance equation are written as
\begin{align}
U_{j}^{\boldsymbol{m}}\left(\sum_{k=1}^{\nu}m_{k}\lambda_{k}\right)\boldsymbol{x}^{\boldsymbol{m}} & =\lambda_{j}U_{j}^{\boldsymbol{m}}\boldsymbol{x}^{\boldsymbol{m}}+S_{j}^{\boldsymbol{m}}\boldsymbol{x}^{\boldsymbol{m}}+H_{j}^{\boldsymbol{m}}\boldsymbol{x}^{\boldsymbol{m}}, & \negthickspace\negthickspace\negthickspace\negthickspace\negthickspace\negthickspace\negthickspace\negthickspace\negthickspace\negthickspace m_{\nu+1}=\cdots=m_{n}=0,\label{eq:homologyInternal-1}\\
U_{j}^{\boldsymbol{m}}\left(\sum_{k=1}^{n}m_{k}\lambda_{k}\right)\boldsymbol{x}^{\boldsymbol{m}} & =\lambda_{j}U_{j}^{\boldsymbol{m}}\boldsymbol{x}^{\boldsymbol{m}}+H_{j}^{\boldsymbol{m}}\boldsymbol{x}^{\boldsymbol{m}}, & \negthickspace\negthickspace\negthickspace\negthickspace\negthickspace\negthickspace\negthickspace\negthickspace\negthickspace\negthickspace\exists l\in\left\{ \nu+1,\ldots,n\right\} :m_{l}\neq0,\label{eq:homologyExternal-1}
\end{align}
where $H_{j}^{\boldsymbol{m}}$ are the terms which are composed of
lower order terms of $\boldsymbol{U}$ and $\boldsymbol{S}$ and known
$\left|\boldsymbol{m}\right|$-th order terms of $\boldsymbol{F}$.
The equations are written for two different kinds of exponents. Equation
(\ref{eq:homologyInternal-1}) is for exponents that exist for both
$\boldsymbol{U}$ and $\boldsymbol{S}$ and therefore part of the
conjugate dynamics; equation (\ref{eq:homologyExternal-1}) is for
exponents that only identify terms in $\boldsymbol{U}$ and they correspond
to dynamics that occurs insides the leaves. Equations are solved recursively
starting with $\left|\boldsymbol{m}\right|=2$ and then in increasing
order for $\left|\boldsymbol{m}\right|>2$.

Equation (\ref{eq:homologyInternal-1}) can be solved under any circumstances,
but it is clear that there are multiple solutions. The terms $U_{j}^{\boldsymbol{m}}$
and $S_{j}^{\boldsymbol{m}}$ can be chosen relative to each other.
If there is a resonance or near resonance, that is $\sum_{k=1}^{\nu}m_{k}\lambda_{k}\approx\lambda_{j}$
we can choose the solution 
\[
U_{j}^{\boldsymbol{m}}=0,\quad S_{j}^{\boldsymbol{m}}=H_{j}^{\boldsymbol{m}},
\]
otherwise we can also choose
\[
U_{j}^{\boldsymbol{m}}=\frac{1}{\sum_{k=1}^{\nu}m_{k}\lambda_{k}-\lambda_{j}}H_{j}^{\boldsymbol{m}},\quad S_{j}^{\boldsymbol{m}}=0
\]
or some other combination of $U_{j}^{\boldsymbol{m}}$, $S_{j}^{\boldsymbol{m}}$.
Equation (\ref{eq:homologyExternal-1}) has a unique solution if $\sum_{k=1}^{n}m_{k}\lambda_{k}\neq\lambda_{j}$,
which is
\[
U_{j}^{\boldsymbol{m}}=\frac{1}{\sum_{k=1}^{n}m_{k}\lambda_{k}-\lambda_{j}}H_{j}^{\boldsymbol{m}},
\]
otherwise no solution exists, unless $H_{j}^{\boldsymbol{m}}$ vanishes.
\begin{acknowledgements}
The author would like to thank George Haller, Thomas Breunung and
the anonymous reviewers for questions and comments that prompted the
author to make improvements to the manuscript. He would also like
to thank for the hospitality of the ETH Z\"urich, in particular the
Institute for Mechanical Systems that hosted the author from February-July
2020 during which time the manuscript was revised.

\medskip{}
\end{acknowledgements}

\noindent \textbf{Computer code:} The computer code that reproduces
the figures and calculations alongside the experimental data can be
found at \url{https://github.com/rs1909/ISFpaper}.

\medskip{}

\noindent \textbf{Funding:} The author is supported by a University
Research Fellowship awarded by the University of Bristol.\medskip{}

\noindent \textbf{Conflict of Interest:} The author declares that
he has no conflict of interest.

\noindent \medskip{}

\noindent \textbf{Preprint:} The present manuscript is available on
the arXiv preprint server \cite{szalai2019invariant}.

\bibliographystyle{plainurl}
\bibliography{AllRef}

\end{document}